\documentclass[12pt,a4paper,reqno]{amsart}
\usepackage[dvips]{graphicx}
\headheight=8pt     \topmargin=0pt
\textheight=624pt   \textwidth=432pt
\oddsidemargin=18pt \evensidemargin=18pt
\usepackage{verbatim}
\usepackage{hyperref}
\usepackage{color}
\usepackage{soul,xcolor}
\usepackage{amssymb}
\usepackage{latexsym}
\usepackage{amsmath}
\usepackage{bbm}
\usepackage{amsthm}
\usepackage{amsrefs}

\newcommand{\R}{\mathbbm{R}}    
\providecommand{\ab}[1]{\vert #1\vert}		
\newcommand{\N}{\mathbbm{N}}    
\newcommand{\te}{\theta}
\newcommand{\la}{\lambda}
\newcommand{\eps}{\varepsilon}
\providecommand{\norma}[1]{\lVert #1 \rVert}          
\providecommand{\Norma}[1]{\biggl\lVert #1 \biggr\rVert}          
\providecommand{\ab}[1]{\vert #1\vert}           
\newcommand{\Ei}{\operatorname{Ei}}      
\newcommand{\Sh}{\mathcal{S}}	

\newcommand{\ddirac}[1]{%
  \,\boldsymbol{\delta}\!\pvector{#1}\!}
\newcommand{\pvector}[1]{\begin{pmatrix}#1\end{pmatrix}}

\DeclareFontFamily{U}{mathx}{\hyphenchar\font45}
\DeclareFontShape{U}{mathx}{m}{n}{
      <5> <6> <7> <8> <9> <10>
      <10.95> <12> <14.4> <17.28> <20.74> <24.88>
      mathx10
      }{}
\DeclareSymbolFont{mathx}{U}{mathx}{m}{n}
\DeclareFontSubstitution{U}{mathx}{m}{n}
\DeclareMathAccent{\widecheck}{0}{mathx}{"71}
\DeclareMathAccent{\wideparen}{0}{mathx}{"75}
\numberwithin{equation}{section}
\theoremstyle{plain}
\newtheorem{teo}{Theorem}[section]
\newtheorem{cor}[teo]{Corollary}
\newtheorem{lemma}[teo]{Lemma}
\newtheorem{prop}[teo]{Proposition}

\theoremstyle{definition}
\newtheorem{define}[teo]{Definition}

\newtheorem{remark}[teo]{Remark}

\begin{document}
\title[Nonexistence of extremals for the hyperboloid]{Nonexistence of extremals for the adjoint restriction inequality on the hyperboloid}
\author{Ren\'e Quilodr\'an}
\address{Department of Mathematics, University of California, Berkeley, CA 94720-3840, USA}
\email{rquilodr@math.berkeley.edu}
\thanks{Research supported in part by NSF grant DMS-0901569.}
\setstcolor{red}
\begin{abstract}
 We study the problem of existence of extremizers for the $L^2$ to $L^p$ adjoint Fourier restriction inequalities for the
hyperboloid in dimensions $3$ and $4$ in the case $p$ is an even integer. We use the method developed by Foschi in
\cite{Fo} to show that extremizers do not exist.
\end{abstract}
\maketitle

\section{Introduction}
For $d\geq 1$, let $\mathbbm H^{d}$ denote the hyperboloid in $\R^{d+1}$, $\mathbbm H^{d}=\{(y,\sqrt{1+\ab{y}^2}):y\in \R^{d}\}$,
equipped with the measure 
\[\sigma(y,y')=\delta\Bigl(y'-\sqrt{1+\ab{y}^2}\Bigr)\frac{dydy'}{\sqrt{1+\ab{y}^2}}\]
defined by duality as
 \[\int_{\mathbbm H^d} g(y,y')d\sigma(y,y')=\int_{\R^d}g\Bigl(y,\sqrt{1+\ab{y}^2}\Bigr)\frac{dy}{\sqrt{1+\ab{y}^2}},\]
for all $g\in C_0(\R^{d+1})$.

A function $f:\mathbbm H^d\to \R$ can be identified with a function from $\R^d$ to $\R$, and in what follows, we do so. We denote
the $L^p(\mathbbm H^d,\sigma)$-norm of a function $f$ by $\norma{f}_{L^p(\mathbbm H^d)},\,\norma{f}_{L^p(\sigma)}$ or
$\norma{f}_p$.

The extension or adjoint Fourier restriction operator for $\mathbbm H^d$ is given by 
\begin{equation}
 \label{fourier-extension-operator-hyperboloid}
 Tf(x,t)=\int_{\R^d} e^{ix\cdot y} e^{it\sqrt{1+\ab{y}^2}} f(y)(1+\ab{y}^2)^{-1/2}dy,
\end{equation}
where $(x,t)\in\R^{d}\times \R$ and $f\in\mathcal S(\R^d)$. With the Fourier transform in $\R^{d+1}$ defined to be $\hat g(\xi)=\int_{\R^{d+1}} e^{-ix\cdot \xi}g(x)dx$, we see that $Tf(x,t)=\widehat{f\sigma}(-x,-t)$.

It is known \cite{Str} that there exists $C_{d,p}<\infty$ such that for all $f\in L^2(\mathbbm H^d)$, the estimate for
$Tf$
\begin{equation}
\label{restriction-hyperboloid}
 \norma{Tf}_{L^p(\R^{d+1})}\leq C_{d,p}\norma{f}_{L^2(\mathbbm H^d)}
\end{equation}
holds provided that
\begin{equation}
\label{numerology}
 \begin{aligned}
  \frac{2(d+2)}{d}\leq &p\leq \frac{2(d+1)}{d-1} &&\text{ if } d>1,\\
  6\leq &p<\infty, &&\text{ if } d=1.
 \end{aligned}
\end{equation}

For $p$ satisfying \eqref{numerology}, we denote by $\mathbf{H}_{d,p}$ the best constant in \eqref{restriction-hyperboloid},
\[\mathbf{H}_{d,p}=\sup\limits_{0\neq f\in L^2(\mathbbm H^d)}\frac{\norma{Tf}_{L^p(\R^{d+1})}}{\norma{f}_{L^2(\mathbbm H^d)}}.\]

We also consider the two-sheeted hyperboloid 
\[\bar{\mathbbm H}^d=\{(y,y')\in\R^d\times\R:y'^2=1+\ab{y}^2\}.\]
and endow it with the measure $\bar\sigma=\sigma^++\sigma^-$, where
\begin{align*}
 \sigma^+(y,y')&=\sigma(y,y')=\delta\Bigr(y'-\sqrt{1+\ab{y}^2}\Bigl)\frac{dydy'}{\sqrt{1+\ab{y}^2}},\\
 \sigma^-(y,y')&=\delta\Bigr(y'+\sqrt{1+\ab{y}^2}\Bigl)\frac{dydy'}{\sqrt{1+\ab{y}^2}}.\\
\end{align*}
The corresponding adjoint Fourier restriction operator is $\bar Tf=\widehat{f\sigma^+}+\widehat{f\sigma^-}$.
If $(d,p)$ satisfies \eqref{numerology}, then
\begin{equation}
\label{restriction-two-sheeted-hyperboloid}
 \norma{\bar Tf}_{L^p(\R^{d+1})}\leq \bar{\mathbf{H}}_{d,p}\norma{f}_{L^2(\bar{\mathbbm H}^d)},
\end{equation}
where
\begin{equation}
 \label{best-constant-two-sheeted}
 \bar{\mathbf{H}}_{d,p}=\sup\limits_{0\neq f\in L^2(\bar{\mathbbm H}^d)}\frac{\norma{\bar
Tf}_{L^p(\R^{d+1})}}{\norma{f}_{L^2(\bar{\mathbbm H}^d)}}
\end{equation}
is finite.

\begin{define}
\label{definition-extremizer}
 An \textbf{extremizing sequence} for inequality \eqref{restriction-hyperboloid} is a sequence $\{f_n\}_{n\in\N}$ of functions
in
$L^2(\mathbbm H^d)$ satisfying $\norma{f_n}_{L^2(\mathbbm H^d)}\leq 1$ such that 
\[\norma{Tf_n}_{L^p(\R^{d+1})}\to \mathbf{H}_{d,p}\text{ as } n\to\infty.\]
 
 An \textbf{extremizer} for inequality \eqref{restriction-hyperboloid} is a function $f\neq 0$ which satisfies
$\norma{Tf}_{L^p(\R^{d+1})}=\mathbf{H}_{d,p}\norma{f}_{L^2(\mathbbm H^d)}$. These terms are defined analogously for inequality
\eqref{restriction-two-sheeted-hyperboloid}.
\end{define}

We are interested in the following pairs $(2,4),\,(2,6)$ and $(3,4)$ of $(d,p)$, which are the only cases for $d>1$ where $p$ is
an even integer. The main result of this paper is the following theorem.

\begin{teo}
\label{main-theorem}
 The values of the best constants are $\mathbf{H}_{2,4}=2^{3/4}\pi,\, \mathbf{H}_{2,6}=(2\pi)^{5/6}$ and
$\mathbf{H}_{3,4}=(2\pi)^{5/4}$. Moreover, extremizers for inequality \eqref{restriction-hyperboloid} do not exist in each of the
three cases of $(d,p)$.
 
 The best constants for the two-sheeted hyperboloid are $\bar{\mathbf{H}}_{2,4}=(3/2)^{1/4}\mathbf{H}_{2,4},\, 
$ and $\bar{\mathbf{H}}_{3,4}=(3/2)^{1/4}\mathbf{H}_{3,4}$, and
extremizers for inequality \eqref{restriction-two-sheeted-hyperboloid}  do not exist.
\end{teo}

When $(d,p)=(2,6)$ we only prove an inequality for $\bar{\mathbf{H}}_{2,6}$ as recorded in the next remark.

\begin{remark}\label{new-remark}
In Proposition \ref{two-sheeted-hyperboloid-p6} we show that for each $f\in L^2(\bar{\mathbbm H}^2)$, $f\neq 0$
\begin{equation*}
\norma{\bar T f}_{L^6(\R^{3})}^6\norma{f}_{L^2(\bar{\mathbbm H}^2)}^{-6}< \frac{25}{4}\mathbf{H}_{2,6}^6,
\end{equation*}
and therefore $\bar{\mathbf{H}}_{2,6}\leq (5/2)^{1/3}\mathbf{H}_{2,6}$. Moreover, a refinement of the argument shows that there is a strict inequality $\bar{\mathbf{H}}_{2,6}<(5/2)^{1/3}\mathbf{H}_{2,6}$.
\end{remark}

We normalize the Fourier transform in $\R^d$ as $\hat g(\xi)=\int_{\R^d} e^{-ix\cdot \xi}g(x)dx$. With this normalization, the
convolution and $L^2(\R^d)$ norm satisfy
\[\widehat{f* g}=\hat f\, \hat g \,\text{ and }\,\norma{\hat f}_{L^2(\R^d)}=(2\pi)^{d/2}\norma{f}_{L^2(\R^d)},\]
respectively. If $p=2k$ is an even integer, we can write \eqref{restriction-hyperboloid} in ``convolution form''
\begin{equation}
\label{convolution-form-hyperboloid}
\begin{aligned}
 \norma{Tf}_{L^{2k}
(\R^{d+1})}^{k}&=\norma{(Tf)^{k}}_{L^2(\R^{d+1})}=\norma{(\widehat{f\sigma})^{k}}_{L^2(\R^{d+1})}=\norma{
(f\sigma*\dots*f\sigma)\hat{}\,}_{L^2(\R^{d+1})}\\
 &=(2\pi)^{(d+1)/2}\norma{f\sigma*\dots*f\sigma}_{L^2(\R^{d+1})},
\end{aligned}
\end{equation}
where $f\sigma*\dots*f\sigma$ denotes the $k$-fold convolution of $f\sigma$ with itself. Therefore, for $p$ an even integer,
\eqref{restriction-hyperboloid} is equivalent to
\[\norma{f\sigma*\dots*f\sigma}_{L^2(\R^{d+1})}^{1/k}\leq (2\pi)^{-(d+1)/2k}C_{d,2k}\norma{f}_{L^2(\mathbbm H^d)}\, \text{ for
all }f\in \mathcal S(\R^{d+1}).\]

For reference, we write here the best constants in convolution form:
\begin{align}
 \sup\limits_{0\neq f\in L^2(\mathbbm H^2)} \norma{f\sigma*f\sigma}_{L^2(\R^3)}^{1/2}\norma{f}_{L^2(\mathbbm
H^2)}^{-1}&=\pi^{1/4},\\
 \sup\limits_{0\neq f\in L^2(\mathbbm H^2)} \norma{f\sigma*f\sigma*f\sigma}_{L^2(\R^3)}^{1/3}\norma{f}_{L^2(\mathbbm
H^2)}^{-1}&=(2\pi)^{1/3},\\
 \sup\limits_{0\neq f\in L^2(\mathbbm H^3)} \norma{f\sigma*f\sigma}_{L^2(\R^4)}^{1/2}\norma{f}_{L^2(\mathbbm
H^3)}^{-1}&=(2\pi)^{1/4}.
\end{align}

It would be interesting to analyze the case $d=1$ for even integers greater than or equal to $6$. Our argument relies on the
explicit
computation of the $n$-fold convolution of the measure $\sigma$ with itself, and this seems to be computationally complicated if
$n\geq 3$. 

Interpolation shows that $\mathbf{H}_{2,p}\leq \mathbf{H}_{2,4}^\te \mathbf{H}_{2,6}^{1-\te}$ for $d=2$ and $p\in[4,6]$, 
where $\frac{1}{p}=\frac{\te}{4}+\frac{1-\te}{6}$. We do not know whether extremizers exist for $p\in (4,6)$, as our method only
applies when $p$ is an even integer.

We consider, for $s>0$, the hyperboloid $\mathbbm H_s^d=\{(y,\sqrt{s^2+\ab{y}^2}):y\in\R^d\}$, equipped with the measure
\begin{equation}
 \label{measure-sigma-s}
 \sigma_s(y,y')=\delta\Bigl(y'-\sqrt{s^2+\ab{y}^2}\Bigr)\frac{dydy'}{\sqrt{s^2+\ab{y}^2}}.
\end{equation}
As we mention in Section \ref{section-on-lorentz}, this measure is natural, since up to multiplication by scalar, it is the only
Lorentz invariant measure on $\mathbbm H^d_s$. Let $T_sf(x,t)=\widehat{f\sigma_s}(x,t)$. For $(d,p)$ satisfying
\eqref{numerology},
\begin{equation}
 \label{restriction-hyperboloid-s}
 \norma{T_sf}_{L^p(\R^{d+1})}\leq \mathbf{H}_{d,p,s}\norma{f}_{L^2(\mathbbm H^d_s)},
\end{equation}
where
\begin{equation}
 \label{hdps}
 \mathbf{H}_{d,p,s}=\sup\limits_{0\neq f\in L^2(\mathbbm H^d_s)}\frac{\norma{T_sf}_{L^p(\R^{d+1})}}{\norma{f}_{L^2(\mathbbm
H^d_s)}}
\end{equation}
is a finite constant.

As shown in Appendix 1, simple scaling relates $\mathbf{H}_{d,p,s}$ and $\mathbf{H}_{d,p}$ by
\begin{equation}
 \label{scaling}
 \mathbf{H}_{d,p,s}=s^{(d-1)/2-(d+1)/p}\mathbf{H}_{d,p}.
\end{equation}
Moreover, $\{f_n\}_{n\in\N}$ is a extremizing sequence for inequality \eqref{restriction-hyperboloid} if and only if
the sequence $\{s^{-1/2}f_n(s^{-1}\cdot)\}_{n\in\N}$ is extremizing for inequality \eqref{restriction-hyperboloid-s}.
Thus, for the problem of extremizers and properties of extremizing sequences, it is enough to study the case $s=1$.

For each $\rho\in(0,\infty)$, we consider the truncated hyperboloid 
\[\mathbbm H^d_{s,\rho}=\Bigl\{\Bigl(y,\sqrt{s^2+\ab{y}^2}\Bigr):y\in\R^d,\, \ab{y}\leq \rho\Bigr\},\]
endowed with the measure which is the restriction of $\sigma_s$ to $\mathbbm H^d_{s,\rho}$. For $f\in L^2(\mathbbm
H^d_{s,\rho})$, let $T_{s,\rho} f=T_sf$ denote the corresponding adjoint Fourier restriction operator. Since $\norma{T_{s,\rho}
f}_{L^\infty(\R^d)}\leq C\norma{f}_{L^2(\mathbbm H^d_{s,\rho})}$, it follows that for $d\geq 1$,
\begin{equation}
 \label{restricted-hyperboloid}
 \norma{T_{s,\rho} f}_{L^p(\R^{d+1})}\leq C\norma{f}_{L^2(\mathbbm H^d_{s,\rho})}
\end{equation}
for $p\geq 2(d+2)/d$ and some constant $C=C(d,p,s,\rho)<\infty$.

The main theorem of Fanelli, Vega and Visciglia in \cite{FVV}*{Theorem 1.1} implies that if $d\geq 1$ and $p>2(d+2)/d$,
complex-valued extremizers
for \eqref{restricted-hyperboloid} exist. There exist nonnegative extremizers if $p$ is an even integer, as can be seen from the
equivalent ``convolution form'' of \eqref{restricted-hyperboloid}. This shows that for $(d,p)=(2,6)$ and $(d,p)=(3,4)$, there
exist
extremizers for \eqref{restricted-hyperboloid}. The case $(d,p)=(2,4)$ does not follow from the result in \cite{FVV}, since it is
the endpoint. In Proposition \ref{result_truncated_hyperboloid}, we prove that in this case,
extremizers do not exist and that the best constant in \eqref{restricted-hyperboloid} is independent of $\rho$
and equals the best constant for the full hyperboloid $\mathbbm H^2_{s}$.
\section{Some related results}

In this section, we discuss the results in \cite{FVV2} and their connection to the case of the adjoint Fourier restriction
inequalities for the hyperboloid analyzed in this paper. 

For $r\in\R$, the \textbf{(nonhomogeneous) Sobolev space} $H^r(\R^d)$ consists of tempered distributions $g$ over $\R^d$ such
that $\hat g\in L^2_{loc}(\R^d)$ and the norm 
\[\norma{g}_{H^r(\R^d)}^2:=\int_{\R^d}\ab{\hat g(y)}^2(1+\ab{y}^2)^rdy\]
is finite.
The \textbf{homogeneous Sobolev space} $\dot H^r(\R^d)$ is the space of tempered distributions $g$ over $\R^d$ such that $\hat
g\in
L^1_{loc}(\R^d)$ and the norm 
\[\norma{g}_{\dot H^r(\R^d)}^2:=\int_{\R^d}\ab{\hat g(y)}^2\ab{y}^{2r}dy\]
is finite. Note that the
$\dot H^r(\R^d)$-norm satisfies the scaling property $\norma{g(\la\cdot)}_{\dot H^r(\R^d)}=\la^{r-d/2}\norma{g}_{\dot H^r(\R^d)}$.

Let us introduce the notation used in \cite{FVV2}. For a function $h:\R^d\to\R$, the operator
$e^{it h(D)}$ is defined by
\begin{equation}
 \label{kg-propagator}
 e^{it h(D)}g(x)=\frac{1}{(2\pi)^{d}}\int_{\R^d}e^{ix\cdot y}e^{it h(y)}\hat g(y)dy\,\text{ for } g\in\Sh(\R^d),
\end{equation}
 and for a function $\eta:\R\to\R$ we define $e^{it \eta(\sqrt{-\Delta})}=e^{it \eta(\ab{D})}$. 

Note that
$e^{it\sqrt{-\Delta+s^2}}g(x)=(2\pi)^{-d}Tf(x,t)$, where $\hat
g(y)=f(y)(s^2+\ab{y}^2)^{-1/2}$; therefore, \eqref{restriction-hyperboloid} is equivalent to the estimate
\begin{equation}
 \label{estimate-propagator}
 \norma{e^{it\sqrt{-\Delta+s^2}}g}_{L_{t,x}^p(\R^{d+1})}\leq C_{d,p,s}\norma{g}_{H^{\frac{1}{2}}(\R^d)}
\end{equation}
for a constant $C_{d,p,s}<\infty$ and $p$ as in \eqref{numerology}. For $s>0$, the operator $e^{it\sqrt{-\Delta+s^2}}$ satisfies
more
general mixed-norm Strichartz estimates, namely,
\begin{equation}
 \label{mixed-norm-strichartz}
 \norma{e^{it\sqrt{-\Delta+s^2}}g}_{L^p_tL^q_x(\R^{d+1})}\leq C\norma{g}_{H^{\frac{1}{p}-\frac{1}{q}+\frac{1}{2}}(\R^d)},
\end{equation}
where $p\in[2,\infty]$, $q\in[2,2d/(d-2)]$ ($q\in[2,\infty]$ if $d=1,2$), and
\[\frac{2}{p}+\frac{d-1+\te}{q}=\frac{d-1+\te}{2},\; (p,q)\neq (2,\infty)\]
Here, $\te\in[0,1]$. We refer the reader to \cite{KO} and the references therein for these
estimates.

Using \eqref{mixed-norm-strichartz}, the Sobolev Embedding Theorem, and interpolation, we obtain that for
$d\geq 2$,
\begin{equation}
 \label{hyperboloid-sobolev-norm}
 \norma{e^{it\sqrt{-\Delta+s^2}}g}_{L_{t,x}^{p}(\R^{d+1})}\leq
C\norma{g}_{H^{r}(\R^d)}
\end{equation}
for all $p$ and $r$ satisfying
\begin{equation}
 \label{condition-on-r-p}
 \frac{1}{2}\leq r<\frac{d}{2},\quad\frac{2(d+2)(d-1)}{d(d-2r)}\leq p\leq
\frac{2(d+1)}{d-2r}.
\end{equation}

An equivalent way to look at the adjoint Fourier restriction inequalities for the hyperboloid $\mathbbm H^d_{s}$ is through
Strichartz
estimates for the Klein-Gordon equation
\begin{equation}
\label{klein-gordon}
\begin{split}
 \partial_t^2 u&=\Delta u-s^2 u\quad \text{in}\;\R^{d+1}\\
 u(0,x)&=u_0(x),\quad \partial_tu(x,x)=u_1(x).
\end{split}
\end{equation}

Writing the solution of \eqref{klein-gordon} as 
\[u(t,\cdot)=\cos(t\sqrt{-\Delta+s^2})u_0(\cdot)+\frac{\sin(t\sqrt{-\Delta+s^2})}{\sqrt{-\Delta+s^2}}u_1(\cdot),\]
or equivalently as
\begin{equation*}
 \begin{aligned}
 u(t,\cdot)=\frac{1}{2}\biggl(e^{it\sqrt{-\Delta+s^2}}u_0(\cdot)&+\frac{1}{i}\frac{e^{it\sqrt{-\Delta+s^2}}}{\sqrt{-\Delta+s^2}}
u_1(\cdot)\biggr)\\
&\quad+
\frac{1}{2}\biggl(e^{-it\sqrt{-\Delta+s^2}}u_0(\cdot)-\frac{1}{i}\frac{e^{-it\sqrt{-\Delta+s^2}}}{\sqrt{-\Delta+s^2}}
u_1(\cdot)\biggr),
\end{aligned}
\end{equation*}
we see that \eqref{hyperboloid-sobolev-norm} is equivalent to the Strichartz estimate for $u$
\begin{equation}
 \label{stricartz-klein-gordon}
 \norma{u}_{L_{t,x}^p(\R^{d+1})}\leq
C\norma{(u_0,u_1)}_{H^{r}(\R^d)\times H^{r-1}(\R^d)},
\end{equation}
where $\norma{(u_0,u_1)}_{H^{r}(\R^d)\times
H^{r-1}(\R^d)}^2:=\norma{u_0}_{H^{r}(\R^d)}^2+\norma{u_1}_{H^{r-1}(\R^d)}^2$, $p$ and $r$ are as in
\eqref{condition-on-r-p}, and $C<\infty$ is a constant depending only on $d,p,r$ and $s$.

In the context of this paper, it is natural to ask whether inequalities \eqref{hyperboloid-sobolev-norm} and
\eqref{stricartz-klein-gordon} admit
extremizers $g\in H^r(\R^d)$ and $(u_0,u_1)\in H^{r}(\R^d)\times H^{r-1}(\R^d)$, respectively, and whether extremizing sequences
are precompact, after the possible application of symmetries. Here, extremizers and extremizing sequences are defined similarly
as for inequality \eqref{restriction-hyperboloid} in Definition \ref{definition-extremizer}.

In \cite{FVV2}, the existence of extremals and precompactness of extremizing sequences is studied for an inequality of the form
\begin{equation}
\label{FVV-inequality}
 \norma{e^{ith(D)}g}_{L^p_{t,x}(\R^{d+1})}\leq C\norma{g}_{\dot H^r(\R^d)},
\end{equation}
for operators $e^{ith(D)}$ that satisfy mixed-norm
estimates 
 \[\norma{e^{ith(D)}g}_{L^p_tL^q_x(\R^{d+1})}\leq C\norma{g}_{\dot H^{r}(\R^d)}\]
for some $0<r<d/2$ and $p$ and $q$ satisfying $2\leq p<q\leq \infty$ where the function $h(\xi)$ is homogenous of some degree
$k>0$,
meaning that $h(\la\xi)=\la^k h(\xi)$ for all $\la>0$ and $\xi\in\R^d$. 

The argument in \cite{FVV2} uses the
homogeneous
Sobolev spaces $\dot H^r(\R^d)$ and that the function $h(\xi)$ is
homogenous. Indeed, it is the scaling property of the $\dot
H^r(\R^d)$-norm and the homogeneity of the 
function $h$ that
imply that \eqref{FVV-inequality} is invariant under scaling, and therefore the
sequence defined
in \cite{FVV2}*{Equation 2.15} is still an extremizing sequence for the same inequality.

For the hyperboloid, the function $h(\xi)=\sqrt{s^2+\ab{\xi}^2}$ is not homogeneous if $s\neq 0$. Therefore, in this case, the
question
of existence
of extremizers in $H^r(\R^d)$, $1/2<r<d/2$, for inequality \eqref{hyperboloid-sobolev-norm} is not
answered in \cite{FVV2}, although information can be obtained from arguments therein, which we
record in Proposition
\ref{extremizing-non-endpoint}. We can contrast this situation with the case of the cone $\Gamma^d=\{(y,\ab{y}):y\in\R^d\}$ with
its dilation invariant measure $\sigma_0=\delta(y'-\ab{y})\ab{y}^{-1}dydy'$. This cone can be seen as the limiting case of the
hyperboloid $(\mathbbm H^d_s,\sigma_s)$ as $s\to 0$. 

Let $T_c$ denote the adjoint Fourier restriction operator on the cone $(\Gamma^d,\sigma_0)$:
\begin{equation}
 \label{fourier-extension-operator-cone}
 T_cf(x,t):=\int_{\R^d} e^{ix\cdot y} e^{it\ab{y}} f(y)\ab{y}^{-1}dy,\; \text{ for } f\in\Sh(\R^d).
\end{equation}

The operator $e^{it\sqrt{-\Delta}}$ is related to $T_c$ by $e^{it\sqrt{-\Delta}}g(x)=(2\pi)^{-d}T_cf(x,t)$,
where $\hat g(y)=f(y)\ab{y}^{-1}$. For $d\geq 2$, the operator $e^{it\sqrt{-\Delta}}$ satisfies
\begin{equation}
 \label{cone-sobolev-norm}
 \norma{e^{it\sqrt{-\Delta}}g}_{L^{\frac{2(d+1)}{d-2r}}_{t,x}(\R^{d+1})}\leq C\norma{g}_{\dot{H}^r(\R^d)},\;\frac{1}{2}\leq
r<\frac{d}{2}.
\end{equation}

The main result of \cite{FVV2}, Theorem 1.1, implies that for $d\geq 2$, extremizers exist for inequality
\eqref{cone-sobolev-norm} for every $1/2<r<d/2$ and, moreover, extremizing sequences are precompact after the
application of symmetries.

For the case $r=1/2$, the existence of extremizers was proved by Carneiro \cite{Ca} in the cases $d=2$ and $d=3$; 
he also found the exact form of the extremizers. The precompactness of extremizing sequences after the application of symmetries,
and
thus the existence
of extremizers, was proved in \cite{RQ} for $d=2$ and by Ramos \cite{Ramos} for $d\geq 2$.

The limiting case of the Klein-Gordon equation \eqref{klein-gordon} as $s\to0$ is the wave equation
\begin{equation}
\begin{split}
 \label{wave-equation}
 \partial_t^2 u&=\Delta u\quad \text{in}\;\R^{d+1},\\
 u(0,x)&=u_0(x),\, \partial_tu(x,x)=u_1(x).
\end{split}
\end{equation}
Its solution can be written as
\begin{equation*}
 u(t,\cdot)=\frac{1}{2}\biggl(e^{it\sqrt{-\Delta}}u_0(\cdot)+\frac{1}{i}\frac{e^{it\sqrt{-\Delta}}}{\sqrt{-\Delta}}
u_1(\cdot)\biggr)+
\frac{1}{2}\biggl(e^{-it\sqrt{-\Delta}}u_0(\cdot)-\frac{1}{i}\frac{e^{-it\sqrt{-\Delta}}}{\sqrt{-\Delta}}
u_1(\cdot)\biggr)
\end{equation*}
and satisfies, for $d\geq 2$ and $1/2\leq r<d/2$,
\begin{equation}
 \label{stricartz-wave-equation}
 \norma{u}_{L^{\frac{2(d+1)}{d-2r}}_{t,x}(\R^{d+1})}\leq
C\norma{(u_0,u_1)}_{\dot H^{r}(\R^d)\times \dot H^{r-1}(\R^d)}.
\end{equation}

Just as for the case of the adjoint Fourier restriction inequality for the cone, there are results concerning the existence of
extremizers for inequality
\eqref{stricartz-wave-equation}, $(u_0,u_1)\in \dot H^r(\R^d)\times\dot H^{r-1}(\R^d)$. Foschi \cite{Fo} studied the case
$r=1/2$ for $d=2$ and $d=3$, proved the existence of extremizers, and found their exact form. 
The existence of extremizers for \eqref{stricartz-wave-equation} when $d\geq 2$ and $1/2< r<d/2$ was proved in
\cite{FVV2}, while the case $d\geq 2$ and
$r=1/2$ was proved by Ramos \cite{Ramos}. See also the discussion at the end of \cite{FVV2}*{Example 1.4} for
complementary
results.

We note that the argument in \cite{FVV2} does not apply to inequality \eqref{stricartz-klein-gordon} for the same reasons
stated before for inequality \eqref{hyperboloid-sobolev-norm}.

Let us return to inequality \eqref{hyperboloid-sobolev-norm}, where we consider the nonendpoint case, that is, the case $p$ and
$r$ satisfy 
\begin{equation}
 \label{numerology-r-p}
  \frac{1}{2}\leq r<\frac{d}{2},\,\frac{2(d+2)(d-1)}{d(d-2r)}< p\leq \frac{2(d+1)}{d-2r},
\end{equation}
that is, \eqref{condition-on-r-p} with the endpoint $p=2(d+2)(d-1)/d(d-2r)$ removed.

In the next proposition, we show that the only obstruction to the convergence of extremizing sequences for inequality 
\eqref{hyperboloid-sobolev-norm}, after the applications of symmetries, is ``concentration at
infinity'' of the Fourier transform.

\begin{prop}
 \label{extremizing-non-endpoint}
 Suppose that $p$ and $r$ satisfy \eqref{numerology-r-p}. Let $\{g_n\}_{n\in\N}$ be an extremizing sequence for inequality
\eqref{hyperboloid-sobolev-norm}. Then one of the following two possibilities holds.
\begin{itemize}
 \item [(i)] For all $R\in(0,\infty),\lim_{n\to\infty}\int_{\ab{y}\leq R}\ab{\hat g_n(y)}^2(1+\ab{y}^2)^{r}dy=0$.

  \item [(ii)] There exist a subsequence $\{g_{n_k}\}_{k\in\N}$ and a sequence $\{(y_k,t_k)\}_{k\in\N}\subset\R^d\times\R$ such
that $\{e^{it_k\sqrt{-\Delta+s^2}}g_{n_k}(y-y_k)\}_{k\in\N}$ converges in $H^r(\R^d)$.
\end{itemize}
 Moreover, if (i) holds, then there exist a subsequence $\{g_{n_k}\}_{k\in\N}$, a sequence of positive real numbers
$\{\la_k\}_{k\in\N}$, $\la_k\to 0$ as $k\to\infty$, a sequence $\{(y_k,t_k)\}_{k\in\N}\subset\R^d\times \R$, and $0\neq v\in \dot
H^r(\R^d)$ such that
\begin{equation}
 \label{weak-convergence-homogeneous}
 \la_k^{d/2-r}e^{it_{k}\sqrt{-\Delta+s^2}}g_{n_k}(\la_k(y-y_k))\rightharpoonup v \text{ as }k\to\infty
\end{equation}
weakly in the homogeneous Sobolev space $\dot H^r(\R^d)$.
\end{prop}

In the dual formulation, in ``physical space'' instead of ``frequency space'', that is, via the equality $\hat
g(y)=f(y)(s^2+\ab{y}^2)^{-1/2}$, inequality \eqref{hyperboloid-sobolev-norm} becomes the weighted estimate
\begin{equation}
 \label{hyperboloid-sobolev-equivalent}
\norma{T_s f}_{L^p(\R^d)}\leq C\norma{f}_{L^2(\mu_s)},
\end{equation}
where the measure $\mu_s(y,y')=(1+\ab{y}^2)^r(s^2+\ab{y}^2)^{-1/2}\sigma_s(y,y')$ is supported on $\mathbbm{H}_{s}^d$. 

The two possibilities in the previous proposition, when written for \eqref{hyperboloid-sobolev-equivalent}, are as follows.
\begin{itemize}
 \item [(i')] The sequence $\{f_n\}_{n\in\N}$ concentrates at spatial infinity, that is, for all $R\in(0,\infty)$,
 \[\lim\limits_{n\to\infty}\int_{\ab{y}\leq R}\ab{f_n(y)}^2(s^2+\ab{y}^2)^{r-1}dy=0.\]
  \item [(ii')] There exist a subsequence $\{f_{n_k}\}_{k\in\N}$ and a sequence $\{(y_k,t_k)\}_{k\in\N}\subset\R^d\times\R$ such
that $\{e^{iy\cdot y_k}e^{it_k\sqrt{s^2+\ab{y}^2}}f_{n_k}(y)\}_{k\in\N}$ converges in $L^2(\mu_s)$.
\end{itemize}
For a set $A\subseteq \R^d$ we denote $\chi_A$ the characteristic function of the set $A$.

\begin{proof}[Sketch of the proof of Proposition \ref{extremizing-non-endpoint}]
 If condition (i) is not satisfied, then there exist $R\in(0,\infty)$ and a subsequence of $\{g_n\}_{n\in\N}$, which we also call
$\{g_n\}_{n\in\N}$, satisfying
\begin{equation}
 \label{reverse-i}
 \inf\limits_{n\in\N}\int_{\ab{y}\leq R}\ab{\hat g_n(y)}^2(1+\ab{y}^2)^rdy=:\eps^2>0.
\end{equation}

We define $g_{n,1}$ and $g_{n,2}$ by their Fourier transforms, $\hat{g}_{n,1}(y)=\hat g_{n}(y)\chi_{\{\ab{y}\leq R\}}$, $\hat
g_{n,2}(y)=\hat g_{n}(y)\chi_{\{\ab{y}>R\}}$. Then $g_n=g_{n,1}+g_{n,2}$; and for all large enough $n$, we have
$\norma{g_{n,1}}_{H^r(\R^d)}\geq \eps/2$
and $\norma{e^{it\sqrt{-\Delta+s^2}}g_{n,1}}_{L^p(\R^{d+1})}\geq c>0$ for a certain constant $c$ independent of $n$.

Under the assumptions on $p$ and $r$, we can apply the
``first step`` in
the proof of \cite{FVV}*{Theorem 1.1} to the sequence $\{g_{n,1}\}_{n\in\N}$ to show that there
exist a
subsequence, which we also call $\{g_{n,1}\}_{n\in\N}$, and a sequence $\{(y_n,t_n)\}_{n\in\N}\subset\R^d\times \R$ such that
the functions $y\mapsto (e^{it_n\sqrt{-\Delta+s^2}}g_{n,1})(y-y_n)$ have a nonzero uniform limit in $\{y\in\R^d:\ab{y}\leq R\}$.
This implies that weak limits of the sequence $\{e^{it_n\sqrt{-\Delta+s^2}}g_n(\cdot-y_n)\}_{n\in\N}$
in $H^r(\R^d)$ are nonzero. Using
the argument given in the proof of \cite{FVV2}*{Theorem 1.1} or an argument similar to that in
\cite{RQ}*{Proposition 8.3}, we see that all the hypotheses of \cite{FVV}*{Proposition 1.1} are satisfied by the
sequence  $\{e^{it_n\sqrt{-\Delta+s^2}}g_{n}(\cdot\,-y_n)\}_{n\in\N}$, which is extremizing. Therefore the latter sequence is
precompact in $H^r(\R^d)$, and (ii) is satisfied.

Let us now suppose that (i) is satisfied. The existence of the subsequence $\{g_{n_k}\}_{k\in\N}$, the sequences
$\{\la_k\}_{k\in\N}$, and $\{(y_k,t_k)\}_{k\in\N}\subset\R^d\times \R$, and the function $v\in\dot
H^r(\R^d)$, $v\neq 0$, satisfying
\eqref{weak-convergence-homogeneous} follows as in \cite{FVV2}*{Proof of Theorem 1.1}. That $\la_k\to 0$ as $k\to\infty$
follows from (i). Indeed,  if there exists a subsequence $\{\la_{k_l}\}_{l\in\N}$ with
$\inf_{l\in\N}\la_{k_l}>0$, then the functions
$h_k(y):=\la_k^{d/2-r}e^{it_{k}\sqrt{-\Delta+s^2}}g_{n_k}(\la_k(y-y_k))$ satisfy

\[\int_{\ab{y}\leq R}\ab{\hat
h_{k_l}(y)}^2(1+\ab{y}^2)^rdy=\int_{\ab{y}\leq R/\la_{k_l}}\ab{\hat g_{n_{k_l}}(y)}^2(1/\la_{k_l}^2+\ab{y}^2)^rdy\to0 \text{ as }
l\to\infty,\]
for every $R<\infty$, which is impossible since $v\neq 0$.
\end{proof}

\section{The Lorentz invariance}\label{section-on-lorentz}

The Lorentz group is defined as the group of invertible linear transformations in $\R^{d+1}$ preserving the bilinear form
$(x,y)\in \R^{d+1}\times \R^{d+1}\mapsto x\cdot J y$, where $J=\operatorname{diag}(-1,\dots,-1,1)$.

Let us denote by $\mathcal L^+$ the subgroup of Lorentz transformations in $\R^{d+1}$ that preserve $\mathbbm H^d_s$. It is known
that $\sigma_s$ is invariant under the action of $\mathcal L^+$ and moreover is, up to multiplication by scalar, the unique
measure on $\mathbbm H^d_s$ invariant under such Lorentz transformations; see
\cite{RS} where the case $d=3$ is considered. The same argument can be adapted to $d\geq 2$. 

For $t\in(-1,1)$, we define the linear map $L^t:\R^{d+1}\to\R^{d+1}$ by 
\[L^t(\xi_1,\dots,\xi_{d},\tau)=\biggl(\frac{\xi_1+t\tau}{\sqrt{1-t^2}},\xi_2\dots,\xi_{d},\frac{\tau+t\xi_1}{\sqrt{1-t^2}}
\biggr).\]
Then $\{L^t\}_{t\in(-1,1)}$ is a one parameter subgroup of Lorentz transformations contained in $\mathcal L^+$.

For $i,j\in\{1,\dots,d\}$, let $P_{i,j}$ be the linear transformation that swaps the $i^{th}$ and $j^{th}$ components of every
vector in $\R^{d+1}$. More precisely, for $(\xi_1,\dots,\xi_d,\tau)\in\R^{d+1}$,
$P_{i,j}(\xi_1,\dots,\xi_d,\tau)=(\xi_{\omega_{i,j}(1)},\dots,\xi_{\omega_{i,j}(d)},\tau)$, where $\omega_{i,j}$ is the
permutation of $\{1,\dots,d\}$ defined by 
\[\omega_{i,j}(k)=\begin{cases}
   j&\text{ if }k=i,\\
   i&\text{ if }k=j,\\
   k&\text{ otherwise}.
  \end{cases}
\]

For every orthogonal matrix $A\in O(d,\R)$, the transformation $(\xi,\tau)\mapsto R_A(\xi,\tau)=(A\xi,\tau)$ belongs to $\mathcal
L^+$.

Composing the transformations $P_{i,j}$ and $L^t$ for suitable $i,j$'s and $t$'s, we easily see that if
$(\xi,\tau)\in\R^{d+1}$ satisfies $\tau>\ab{\xi}$, then there exists $L\in\mathcal L^+$ such that
$L(\xi,\tau)=(0,\sqrt{\tau^2-\xi^2})$. Alternatively, this can be seen using the transformations $R_A$ and $L^t$. We first
find $A\in O(d,\R)$ such that $A\xi=(\ab{\xi},0,\dots,0)$. We take $t=-\ab{\xi}\tau^{-1}$ and note that
$L^t(R_A(\xi,\tau))=L^t(\ab{\xi},0,\dots,0,\tau)=(0,\sqrt{\tau^2-\ab{\xi}^2})$.

For $p\in[1,\infty]$, $L\in \mathcal L^+$, and $f\in L^p(\mathbbm H^d_s)$ we define $L^* f=f\circ L$; here ``$\circ$'' denotes
composition. The invariance of the measure $\sigma_s$ under the action of $\mathcal L^+$ implies that $\norma{f}_{L^p(\mathbbm
H^d_s)}=\norma{L^* f}_{L^p(\mathbbm H^d_s)}$
for all $p\in[1,\infty]$, equality holding for $p=\infty$ since Lorentz transformations are invertible. It is also straightforward
to check that $\norma{T_s(L^* f)}_{L^p(\R^{d+1})}=\norma{T_s f}_{L^p(\R^{d+1})}$ for $p\in[1,\infty]$. Therefore, if
$\{f_n\}_{n\in\N}$ is an extremizing sequence
for \eqref{restriction-hyperboloid-s} and $\{L_n\}_{n\in\N}\subset \mathcal L^+$, then $\{L_n^* f_n\}_{n\in\N}$ is also an
extremizing sequence for \eqref{restriction-hyperboloid-s}.

We use the Lorentz transformations $P_{i,j},\, R_A$ and $L^t$. The invariance of $\sigma_s$ with
respect to these transformations can be seen directly from an examination of the Jacobians in the change of variables formula.

\section{On Foschi's argument}

For ease of notation, let $\psi_s(x)=\sqrt{s^2+x^2}$ for $s,x\in\R$ and set $\psi:=\psi_1$. We also write $\psi_s(y)$ to mean
$\psi_s(\ab{y})$ for $y\in\R^d$. We define the convolution of measures $\mu,\nu$ on $\R^d$ by duality as
\[\int_{\R^d} gd(\mu*\nu)=\int_{(\R^d)^2} g(x+y)d\mu(x)d\nu(y),\]
for all $g\in C_0(\R^d)$. For a measure $\mu$ on $\R^d$ and $n\geq 1$, $\mu^{(*n)}=\mu*\dots*\mu$ denotes the
$n$-fold convolution of $\mu$ with itself.

The measure $\sigma_s$ on $\mathbbm H^d_s$ has the property that $\sigma_s^{(*n)}$ is supported in the
closure of the region $\mathcal P_{d,n}=\{(\xi,\tau):\tau>\sqrt{(ns)^2+\ab{\xi}^2}\}$. For each fixed $(\xi,\tau)\in \mathcal
P_{d,n}$, we define the measure on $(\R^d)^n$
\[\mu_{(\xi,\tau)}=\ddirac{\tau-\psi_s(x_1)-\dots-\psi_s(x_n)\\ \xi-x_1-\dots -x_n}dx_1\dotsm dx_n.\]
Recall that the Dirac delta measure $\delta_0$ on $\R^d\times\R$, is defined by
\[\langle\delta_0,f\rangle=f(0),\;\text{ for all }f\in\mathcal S(\R^d\times\R).\]
The measure $\mu_{(\xi,\tau)}$ is the pullback of $\delta_0$ on $\R^d\times\R$ by 
$\Phi_{(\xi,\tau)}:(\R^d)^n\to\R^d\times\R$ given by
\[\Phi_{(\xi,\tau)}(x_1,\dots,x_n)=(\xi-x_1-\dots -x_n,\tau-\psi_s(x_1)-\dots-\psi_s(x_n)).\]

As discussed in \cite{Fo}, the pullback is well-defined as long as the differential of $\Phi_{(\xi,\tau)}$ is surjective at the 
points where $\Phi_{(\xi,\tau)}$ vanishes. The differential of $\Phi_{(\xi,\tau)}$ is surjective at a point $(x_1,\dots,x_n)$ if
and only if $x_1,\dots,x_n$ are not all equal. Now $\Phi_{(\xi,\tau)}(x,\dots,x)=0$ if and only if $\tau^2=(ns)^2+\ab{\xi}^2$,
that is, at the boundary of $\mathcal P_{d,n}$. Thus, the pullback is well-defined on $\mathcal P_{d,n}$.

For each $(\xi,\tau)\in \mathcal P_{d,n}$, we define the inner product $\langle\cdot,\cdot\rangle_{(\xi,\tau)}$ and norm
$\norma{\cdot}_{(\xi,\tau)}$ associated to $\mu_{(\xi,\tau)}$ as
\begin{align*}
 \langle F,G\rangle_{(\xi,\tau)}&=\int_{(\R^d)^n} F(x_1,\dots,x_n)\overline{G(x_1,\dots,x_n)}\, d\mu_{(\xi,\tau)}(x_1,\dots,x_n),\\
 \norma{F}_{(\xi,\tau)}^2&=\int_{(\R^d)^n} \ab{F(x_1,\dots,x_n)}^2 \, d\mu_{(\xi,\tau)}(x_1,\dots,x_n).
\end{align*}

What connects this inner product with inequality \eqref{restriction-hyperboloid} is the following identity. For $f_1,\dotsc,f_n\in
L^2(\mathbbm H^d_s)$,
\begin{align*}
 f_1\sigma_s*\dotsb*f_n\sigma_s&=\int_{(\R^d)^n}\frac{f_1(x_1)\dotsm
f_n(x_n)}{\psi_s(x_1)\dotsm\psi_s(x_n)}\delta(\xi-x_1-\dots-x_n) \\
 &\qquad\cdot\delta(\tau-\psi_s(x_1)-\dots-\psi_s(x_n))\,dx_1\dotsm dx_n\\
 &=\int_{(\R^d)^n}\frac{f_1(x_1)\dotsm f_n(x_n)}{\psi_s(x_1)\dotsm\psi_s(x_n)} d\mu_{(\xi,\tau)}(x_1,\dotsc, x_n)\\
 &=\langle F,G\rangle_{(\xi,\tau)},
\end{align*}
where 
\[F(x_1,\dotsc,x_n)=\frac{f_1(x_1)\dotsm f_n(x_n)}{\psi_s(x_1)^{1/2}\dotsm\psi_s(x_n)^{1/2}}\]
and
\[G(x_1,\dotsc,x_n)=\frac{1}{\psi_s(x_1)^{1/2}\dotsm\psi_s(x_n)^{1/2}}.\]

\begin{lemma}
 \label{convolution-foschi}
 If $f\in \mathcal S(\R^d)$, then
 \begin{align}
  \label{inequality-for-norm-convolution}
  \norma{(f\sigma_s)^{(*n)}}_{L^2(\R^d)}&\leq \norma{\sigma_s^{(*n)}}_{L^\infty(\R^d)}^{1/2}\norma{f}_{L^2(\mathbbm H^d_s)}^n.
 \end{align}
Moreover, for $f\neq 0$, equality holds in \eqref{inequality-for-norm-convolution} only if
$\sigma_s^{(*n)}(\xi,\tau)=\norma{\sigma_s^{(*n)}}_{L^\infty(\R^d)}$ for a.e. $(\xi,\tau)$ in the support of $(f\sigma_s)^{(*n)}$.
\end{lemma}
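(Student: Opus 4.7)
The plan is to chain together two inequalities—a pointwise Cauchy--Schwarz in the inner product $\langle\cdot,\cdot\rangle_{(\xi,\tau)}$ and the trivial bound by the $L^\infty$ norm of $\sigma_s^{(*n)}$—and then integrate. Using the identity established in the excerpt with $f_1=\dots=f_n=f$, we have
\[
(f\sigma_s)^{(*n)}(\xi,\tau)=\langle F,G\rangle_{(\xi,\tau)},
\]
where $F(x_1,\dots,x_n)=\prod_i f(x_i)/\psi_s(x_i)^{1/2}$ and $G(x_1,\dots,x_n)=\prod_i 1/\psi_s(x_i)^{1/2}$. The first step is to apply Cauchy--Schwarz in the measure $\mu_{(\xi,\tau)}$:
\[
|(f\sigma_s)^{(*n)}(\xi,\tau)|^2\leq \norma{F}_{(\xi,\tau)}^2\,\norma{G}_{(\xi,\tau)}^2.
\]
The key observation is that $\norma{G}_{(\xi,\tau)}^2=\sigma_s^{(*n)}(\xi,\tau)$, since this is exactly the formula for the $n$-fold convolution of $\sigma_s$ evaluated at $(\xi,\tau)$ expressed via the pullback $\mu_{(\xi,\tau)}$. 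Hence, bounding it by its $L^\infty$ norm, one obtains
\[
|(f\sigma_s)^{(*n)}(\xi,\tau)|^2\leq \norma{\sigma_s^{(*n)}}_{L^\infty}\,\norma{F}_{(\xi,\tau)}^2.
\]

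The second step is to integrate in $(\xi,\tau)\in\R^{d+1}$. By the very definition of $\mu_{(\xi,\tau)}$ as the pullback of $\delta_0$ under $\Phi_{(\xi,\tau)}$, integrating out $(\xi,\tau)$ against $d\mu_{(\xi,\tau)}\,d\xi\,d\tau$ cancels the two Dirac factors and leaves Lebesgue measure on $(\R^d)^n$. Thus Fubini yields
\[
\int_{\R^{d+1}}\norma{F}_{(\xi,\tau)}^2\,d\xi\,d\tau=\int_{(\R^d)^n}\prod_{i=1}^{n}\frac{|f(x_i)|^2}{\psi_s(x_i)}\,dx_1\cdots dx_n=\norma{f}_{L^2(\mathbbm{H}^d_s)}^{2n},
\]
and combining with the pointwise estimate proves the stated inequality (modulo the apparent typo $L^2(\R^d)$ for $L^2(\R^{d+1})$).

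For the equality statement, trace through the argument. The overall estimate is tight only if, for a.e.\ $(\xi,\tau)$, both Cauchy--Schwarz is sharp and $\sigma_s^{(*n)}(\xi,\tau)=\norma{\sigma_s^{(*n)}}_{L^\infty}$—\emph{except} at points where $\norma{F}_{(\xi,\tau)}=0$, which contribute nothing. Observe that $(f\sigma_s)^{(*n)}(\xi,\tau)\neq 0$ forces $\norma{F}_{(\xi,\tau)}\neq 0$, so on the support of $(f\sigma_s)^{(*n)}$ one must have $\sigma_s^{(*n)}(\xi,\tau)=\norma{\sigma_s^{(*n)}}_{L^\infty}$ almost everywhere, which is the desired conclusion.

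I do not anticipate a serious obstacle: the argument is a clean two-step Cauchy--Schwarz plus Fubini. The only mildly delicate point is justifying the exchange of integrations, i.e., confirming that the pullback measures $\mu_{(\xi,\tau)}$ genuinely disintegrate Lebesgue measure on $(\R^d)^n$ against $d\xi\,d\tau$; this is exactly the well-definedness of the pullback discussed right before the lemma and uses the surjectivity of $d\Phi_{(\xi,\tau)}$ on $\mathcal P_{d,n}$.
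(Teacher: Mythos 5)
Your argument is correct, and it is not the route the paper takes for this lemma: the paper proves Lemma \ref{convolution-foschi} by duality, pairing $(f\sigma_s)^{(*n)}$ against a test function $g\in\mathcal S(\R^{d+1})$, applying Cauchy--Schwarz once over $(\R^d)^n$ with Lebesgue measure, and then estimating $\langle g^2,\sigma_s^{(*n)}\rangle\leq\norma{g}_2^2\norma{\sigma_s^{(*n)}}_\infty$. You instead apply Cauchy--Schwarz pointwise in the slice inner product $\langle\cdot,\cdot\rangle_{(\xi,\tau)}$, obtain the pointwise bound $\ab{(f\sigma_s)^{(*n)}(\xi,\tau)}^2\leq\norma{F}_{(\xi,\tau)}^2\,\sigma_s^{(*n)}(\xi,\tau)$, and integrate using the disintegration identity $\int\norma{F}_{(\xi,\tau)}^2\,d\xi\,d\tau=\norma{f}_{L^2(\sigma_s)}^{2n}$. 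In effect you have reproved the paper's sharper Lemma \ref{convolution-foschi-with-measure} and derived Lemma \ref{convolution-foschi} from it by the trivial bound $\sigma_s^{(*n)}\leq\norma{\sigma_s^{(*n)}}_\infty$. The paper's duality route is slightly more economical and sidesteps the Fubini/coarea step you flag as the delicate point; your route is more informative, since the intermediate pointwise estimate is precisely what the paper later needs in Section 5 for the concentration arguments. Your treatment of the equality case is also sound: equality in the integrated inequality forces $\norma{F}_{(\xi,\tau)}^2\bigl(\norma{\sigma_s^{(*n)}}_\infty-\sigma_s^{(*n)}(\xi,\tau)\bigr)=0$ a.e., and since $\norma{F}_{(\xi,\tau)}=0$ implies $(f\sigma_s)^{(*n)}(\xi,\tau)=\langle F,G\rangle_{(\xi,\tau)}=0$, the conclusion follows on the set where $(f\sigma_s)^{(*n)}\neq 0$, which is what is used in Corollary \ref{nonexistence-tool}. (Both you and the paper conclude for a.e.\ point where $(f\sigma_s)^{(*n)}$ is nonzero rather than literally a.e.\ on the closed support; this discrepancy with the statement's wording is harmless for the application.) You are also right that the two $\R^d$'s in the displayed inequality should read $\R^{d+1}$.
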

\begin{proof}
 Let $g\in \mathcal S(\R^{d+1})$. By definition of the convolution,
 \begin{align}
  \langle g,(f\sigma_s)^{(*n)}\rangle
  &=\int_{(\R^d)^n} g(x_1+\dotsb+x_n,\psi_s(x_1)+\dotsb+\psi_s(x_n))\nonumber\\
  &\qquad\cdot\frac{f(x_1)\dotsm
f(x_n)}{\psi_s(x_1)\dotsm\psi_s(x_n)}dx_1\dots dx_n\nonumber\\
  &=\int_{(\R^d)^n} \frac{g(x_1+\dotsb+x_n,\psi_s(x_1)+\dotsb+\psi_s(x_n))}{\psi_s(x_1)^{1/2}\dotsm\psi_s(x_n)^{1/2}}\nonumber\\
  &\qquad\cdot\frac{f(x_1)\dotsm
f(x_n)}{\psi_s(x_1)^{1/2}\dotsm\psi_s(x_n)^{1/2}}dx_1\dotsm dx_n\nonumber\\
  &\leq \biggl(\int_{(\R^d)^n} \frac{g^2(x_1+\dots+x_n,\psi_s(x_1)+\dots+\psi_s(x_n))}{\psi_s(x_1)\dotsm\psi_s(x_n)}dx_1\dotsm
dx_n\biggr)^{1/2}\\
&\qquad\cdot\biggl(\int_{(\R^d)^n} \frac{f(x_1)^2\dotsm f(x_n)^2}{\psi_s(x_1)\dotsm\psi_s(x_n)}dx_1\dotsm
dx_n\biggr)^{1/2}\nonumber\\
  &=\langle g^2,\sigma_s^{(*n)}\rangle^{1/2}\norma{f}_{L^2(\mathbbm H_s^d)}^n\nonumber\\
  \label{cauchy-schwarz-convolution}
  &\leq \norma{g}_{L^2(\R^d)}\norma{\sigma_s^{(*n)}}_{L^\infty(\R^d)}^{1/2}\norma{f}_{L^2(\mathbbm H^d_s)}^n.
 \end{align}
 Taking the supremum over $g\in L^2(\R^{d+1})$ proves \eqref{inequality-for-norm-convolution}.
 
Now if $\norma{(f\sigma_s)^{(*n)}}_{L^2(\R^d)}=\norma{\sigma_s^{(*n)}}_{L^\infty(\R^d)}^{1/2}\norma{f}_{L^2(\mathbbm H^d_s)}^n$,
then, taking $g=(f\sigma_s)^{(*n)}$, we must have equality in \eqref{cauchy-schwarz-convolution}, i.e.,
\[\langle ((f\sigma_s)^{(*n)})^2,\sigma_s^{(*n)}\rangle=\norma{(f\sigma_s)^{(*n)}}_{L^2(\mathbbm
H^d_s)}^2\norma{\sigma_s^{(*n)}}_{L^\infty(\R^d)}, \]
which occurs if and only if $\sigma_s^{(*n)}(\xi,\tau)=\norma{\sigma_s^{(*n)}}_{L^\infty(\R^d)}$ for a.e. $(\xi,\tau)$ in the
support of $(f\sigma_s)^{(*n)}$.
\end{proof}

From Lemma \ref{convolution-foschi} and \eqref{convolution-form-hyperboloid}, we obtain the following result.

\begin{cor}
\label{general-upper-bound}
 Let $(d,p)$ satisfy \eqref{numerology} and suppose $p=2k$ is an even integer. Then
 \begin{equation}
  \label{upper-bound-best-constant-convolution}
  \norma{T_sf}_{L^p(\R^{d+1})}\leq (2\pi)^{(d+1)/p}\norma{\sigma_s^{(*k)}}_{L^\infty(\R^{d+1})}^{1/p}\norma{f}_{L^2(\mathbbm H^d_s)},
 \end{equation}
and thus 
\begin{equation}
\label{bound-for-H}
 \mathbf{H}_{d,p,s}\leq (2\pi)^{(d+1)/p}\norma{\sigma_s^{(*k)}}_{L^\infty(\R^{d+1})}^{1/p}.
\end{equation}
\end{cor}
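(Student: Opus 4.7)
The plan is to combine Lemma \ref{convolution-foschi} directly with the convolution-form identity \eqref{convolution-form-hyperboloid}, which is essentially a two-line deduction; the corollary is a clean repackaging rather than a genuinely new computation.

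First I would verify that the derivation of \eqref{convolution-form-hyperboloid} goes through verbatim with $\sigma$ replaced by $\sigma_s$, so that for $p=2k$ an even integer and $f\in\mathcal S(\R^d)$ we have
\[\norma{T_s f}_{L^{2k}(\R^{d+1})}^{k}=(2\pi)^{(d+1)/2}\norma{(f\sigma_s)^{(*k)}}_{L^2(\R^{d+1})}.\]
The only ingredients are Plancherel on $\R^{d+1}$ (whose constant $(2\pi)^{(d+1)/2}$ does not see the parameter $s$) and the identity $T_s f = \widehat{f\sigma_s}$, both independent of $s$, so no additional work is needed.

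Next I would apply Lemma \ref{convolution-foschi} with $n=k$, which gives
\[\norma{(f\sigma_s)^{(*k)}}_{L^2(\R^{d+1})}\leq \norma{\sigma_s^{(*k)}}_{L^\infty(\R^{d+1})}^{1/2}\norma{f}_{L^2(\mathbbm H^d_s)}^{k}.\]
Substituting this into the previous display and taking $k$-th roots produces
\[\norma{T_s f}_{L^p(\R^{d+1})}\leq (2\pi)^{(d+1)/(2k)}\norma{\sigma_s^{(*k)}}_{L^\infty(\R^{d+1})}^{1/(2k)}\norma{f}_{L^2(\mathbbm H^d_s)},\]
which is exactly \eqref{upper-bound-best-constant-convolution} after rewriting $1/(2k)=1/p$. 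Dividing by $\norma{f}_{L^2(\mathbbm H_s^d)}$ and taking the supremum over nonzero $f\in\mathcal S(\R^d)$ (which is dense in $L^2(\mathbbm H_s^d)$) yields \eqref{bound-for-H}.

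There is no real obstacle here, since Lemma \ref{convolution-foschi} already carries the essential analytic content; the only small points to watch are that the Plancherel normalization in \eqref{convolution-form-hyperboloid} is stated for $\sigma$ but extends to $\sigma_s$ unchanged, and that passing from $\mathcal S(\R^d)$ to all of $L^2(\mathbbm H_s^d)$ in the supremum requires a standard density argument using that the right-hand side of \eqref{upper-bound-best-constant-convolution} is continuous in $f$ with respect to the $L^2(\mathbbm H_s^d)$ norm.
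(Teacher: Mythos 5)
Your proposal is correct and follows exactly the route the paper takes: the corollary is stated as an immediate consequence of Lemma \ref{convolution-foschi} combined with the convolution-form identity \eqref{convolution-form-hyperboloid}, which is precisely what you do. The extra remarks on the $s$-independence of the Plancherel constant and the density of $\mathcal S(\R^d)$ in $L^2(\mathbbm H^d_s)$ are sound, if more explicit than the paper's one-line proof.
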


In the three cases of pairs $(d,p)$ that interest us in this paper, \eqref{bound-for-H} gives
\begin{align*}
 \mathbf{H}_{2,4,s}&\leq (2\pi)^{3/4}\norma{\sigma_s*\sigma_s}_{L^\infty(\R^{3})}^{1/4},\\
 \mathbf{H}_{2,6,s}&\leq (2\pi)^{1/2}\norma{\sigma_s*\sigma_s*\sigma_s}_{L^\infty(\R^{3})}^{1/6},\text{ and}\\
 \mathbf{H}_{3,4,s}&\leq 2\pi\norma{\sigma_s*\sigma_s}_{L^\infty(\R^{4})}^{1/4}.
\end{align*}

To prove the nonexistence of extremizers, we use the following result.

\begin{cor}
\label{nonexistence-tool}
 Let $(d,p)$ satisfy \eqref{numerology} and suppose $p=2k$ is an even integer. Suppose that $\mathbf{H}_{d,p}=
(2\pi)^{(d+1)/p}\norma{\sigma^{(*k)}}_{L^\infty(\R^{d+1})}^{1/p}$ and that
$\sigma^{(*k)}(\tau,\xi)<\norma{\sigma^{(*k)}}_{L^\infty(\R^{d+1})}$ for a.e. $(\xi,\tau)$ in the support of $\sigma^{(*k)}$. Then
extremizers for inequality \eqref{restriction-hyperboloid} do not exist for the pair $(d,p)$.
\end{cor}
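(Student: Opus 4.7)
The plan is to argue by contradiction. Suppose $f\in L^2(\mathbbm H^d)$ with $f\neq 0$ is an extremizer for \eqref{restriction-hyperboloid}, so that $\norma{Tf}_{L^p(\R^{d+1})}=\mathbf{H}_{d,p}\norma{f}_{L^2(\mathbbm H^d)}$. The strategy is to promote this equality into the equality case of Lemma \ref{convolution-foschi} and then contradict the strict-inequality hypothesis on $\sigma^{(*k)}$ that is assumed in the statement.

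First I would translate the extremizer identity into convolution form. From \eqref{convolution-form-hyperboloid} we have
\[\norma{Tf}_{L^p(\R^{d+1})}^k=(2\pi)^{(d+1)/2}\norma{(f\sigma)^{(*k)}}_{L^2(\R^{d+1})}.\]
Substituting the assumed formula $\mathbf{H}_{d,p}=(2\pi)^{(d+1)/p}\norma{\sigma^{(*k)}}_{L^\infty(\R^{d+1})}^{1/p}$ into $\norma{Tf}_{L^p}=\mathbf{H}_{d,p}\norma{f}_{L^2(\mathbbm H^d)}$, raising to the $k$-th power, and simplifying the powers of $2\pi$ (using $p=2k$) yields
\[\norma{(f\sigma)^{(*k)}}_{L^2(\R^{d+1})}=\norma{\sigma^{(*k)}}_{L^\infty(\R^{d+1})}^{1/2}\norma{f}_{L^2(\mathbbm H^d)}^{k},\]
which is exactly equality in \eqref{inequality-for-norm-convolution}. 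By the ``moreover'' clause of Lemma \ref{convolution-foschi}, this forces
\[\sigma^{(*k)}(\xi,\tau)=\norma{\sigma^{(*k)}}_{L^\infty(\R^{d+1})}\quad\text{for a.e. }(\xi,\tau)\in\operatorname{supp}(f\sigma)^{(*k)}.\]

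To close the argument I would observe two things. First, since $\operatorname{supp}(f\sigma)\subseteq\operatorname{supp}(\sigma)$, the standard inclusion $\operatorname{supp}(\mu*\nu)\subseteq\overline{\operatorname{supp}(\mu)+\operatorname{supp}(\nu)}$ iterated $k$ times gives $\operatorname{supp}(f\sigma)^{(*k)}\subseteq\operatorname{supp}\sigma^{(*k)}$. Second, $(f\sigma)^{(*k)}$ is a nonzero element of $L^2(\R^{d+1})$, since its norm is positive by the displayed equality together with $f\neq 0$, hence its support has positive $(d+1)$-dimensional Lebesgue measure. The corollary's hypothesis $\sigma^{(*k)}(\xi,\tau)<\norma{\sigma^{(*k)}}_{L^\infty(\R^{d+1})}$ for a.e. $(\xi,\tau)\in\operatorname{supp}\sigma^{(*k)}$ therefore contradicts the equality forced in the previous step, and no extremizer can exist.

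There is no real obstacle here: the corollary is a direct consequence of the equality characterization in Lemma \ref{convolution-foschi}. The only mildly nontrivial bookkeeping is the propagation of supports under iterated convolution and the observation that a nonzero $L^2$ function has support of positive Lebesgue measure; the genuine work of the paper is verifying the two hypotheses of this corollary for each of the three pairs $(d,p)$ of interest.
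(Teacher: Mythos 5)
Your proof is correct and follows exactly the same route as the paper, which simply cites the ``moreover'' clause of Lemma \ref{convolution-foschi}: an extremizer would force $\sigma^{(*k)}$ to attain its essential supremum a.e.\ on the positive-measure support of $(f\sigma)^{(*k)}$, contradicting the hypothesis. The extra bookkeeping you supply about supports and about nonzero $L^2$ functions having positive-measure support is sound and just fills in what the paper leaves implicit.
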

\begin{proof}
 This is direct consequence of the last assertion in Lemma \ref{convolution-foschi}.
\end{proof}

\begin{lemma}
 \label{convolution-foschi-with-measure}
 If $f\in \mathcal S(\R^d)$, then
 \begin{multline}
 \label{equality-convolution-measure}
  \norma{f\sigma_s^{(*n)}}_{L^2(\R^d)}^2\leq \int_{(\R^d)^n} \frac{f^2(x_1)\dotsm f^2(x_n)}{\psi_s(x_1)\dotsm \psi_s(x_n)}\\
\cdot\sigma_s^{(*n)}(x_1+\dotsb +x_n,\psi_s(x_1)+\dotsb+\psi_s(x_n))\, dx_1\dotsm dx_n.
 \end{multline}
\end{lemma}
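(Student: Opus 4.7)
The plan is to refine the Cauchy-Schwarz step in the proof of Lemma~\ref{convolution-foschi} by retaining $\sigma_s^{(*n)}(\xi,\tau)$ as a pointwise weight instead of discarding it via the $L^\infty$ bound. The upshot is that the new inequality immediately recovers Lemma~\ref{convolution-foschi} after one applies $\sigma_s^{(*n)}(\xi,\tau)\leq \norma{\sigma_s^{(*n)}}_\infty$ to the right-hand side.

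First I would use the pullback identification recorded just before Lemma~\ref{convolution-foschi} to express, pointwise in $(\xi,\tau)\in\mathcal P_{d,n}$,
\[(f\sigma_s)^{(*n)}(\xi,\tau)=\langle F,G\rangle_{(\xi,\tau)},\]
with $F(x_1,\dots,x_n)=f(x_1)\cdots f(x_n)/(\psi_s(x_1)\cdots\psi_s(x_n))^{1/2}$ and $G(x_1,\dots,x_n)=(\psi_s(x_1)\cdots\psi_s(x_n))^{-1/2}$. Directly from the definition of $\mu_{(\xi,\tau)}$ as the pullback realizing the $n$-fold convolution,
\[\norma{G}_{(\xi,\tau)}^{2}=\int_{(\R^d)^n}\frac{d\mu_{(\xi,\tau)}(x_1,\dots,x_n)}{\psi_s(x_1)\cdots\psi_s(x_n)}=\sigma_s^{(*n)}(\xi,\tau),\]
so the Cauchy-Schwarz inequality for the inner product $\langle\cdot,\cdot\rangle_{(\xi,\tau)}$ gives the pointwise estimate
\[\ab{(f\sigma_s)^{(*n)}(\xi,\tau)}^2\leq \sigma_s^{(*n)}(\xi,\tau)\int_{(\R^d)^n}\frac{f^2(x_1)\cdots f^2(x_n)}{\psi_s(x_1)\cdots\psi_s(x_n)}\,d\mu_{(\xi,\tau)}.\]

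Next I would integrate in $(\xi,\tau)\in\R^{d+1}$ to obtain an upper bound for $\norma{(f\sigma_s)^{(*n)}}_2^2$, and swap the order of integration by Fubini-Tonelli, permissible since every factor is nonnegative and $f\in\mathcal S(\R^d)$ guarantees absolute convergence. The elementary identity
\[\int_{\R^{d+1}}d\mu_{(\xi,\tau)}(x_1,\dots,x_n)\,d\xi\,d\tau=dx_1\cdots dx_n,\]
obtained by unwinding the defining delta functions of $\mu_{(\xi,\tau)}$, collapses the inner $(\xi,\tau)$-integral to the value of $\sigma_s^{(*n)}$ at $(x_1+\cdots+x_n,\,\psi_s(x_1)+\cdots+\psi_s(x_n))$, giving exactly \eqref{equality-convolution-measure}.

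The only delicate point is this last Fubini interchange together with the pointwise meaning of $\sigma_s^{(*n)}$, both of which are already handled by the paper's setup in Section~3: the pullback interpretation of $\mu_{(\xi,\tau)}$ is precisely what lets one treat $\sigma_s^{(*n)}$ as a locally integrable function on $\mathcal P_{d,n}$, and nonnegativity removes any convergence worries. Thus no genuine new obstacle appears beyond what is already absorbed in the framework established before Lemma~\ref{convolution-foschi}.
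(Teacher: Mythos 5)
Your argument is correct and is essentially the same as the paper's: write $(f\sigma_s)^{(*n)}(\xi,\tau)$ as the inner product $\langle F,G\rangle_{(\xi,\tau)}$, apply Cauchy--Schwarz using $\norma{G}_{(\xi,\tau)}^2=\sigma_s^{(*n)}(\xi,\tau)$, integrate over $(\xi,\tau)$, and swap the order by Fubini--Tonelli. The only presentational difference is that you carry out general $n$ directly while the paper writes out $n=2$ and notes the general case is analogous.
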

\begin{proof}
 We prove the Lemma only for the case $n=2$; the proof for the general case is similar and only requires more notation.
Following Foschi's argument, we write 
\begin{align}
 f\sigma_s*f\sigma_s(\xi,\tau)&=\int_{(\R^d)^2} \frac{f(x)f(y)}{\psi_s(x)\psi_s(y)}\delta(\xi-x-y)
 \delta(\tau-\psi_s(x)-\psi_s(y))\,dx\,dy\nonumber\\
 \label{convolution-again}
 &=\int_{(\R^d)^2} \frac{f(x)f(y)}{\psi_s(x)\psi_s(y)} d\mu_{(\tau,\xi)}(x,y).
\end{align}

From the Cauchy-Schwarz inequality, for $(\xi,\tau)\in \mathcal P_{d,2}$,
\begin{equation}
\label{from-cauchy-schwarz-general}
 \ab{f\sigma_s*f\sigma_s(\tau,\xi)}\leq \Norma{\frac{f(x)f(y)}{\psi_s(x)^{1/2}\psi_s(y)^{1/2}}}_{(\tau,\xi)}
 \Norma{\frac{1}{\psi_s(x)^{1/2}\psi_s(y)^{1/2}}}_{(\tau,\xi)}.
\end{equation}
Now
\begin{equation}
\label{three-fold-convolution-general}
 \Norma{\frac{1}{\psi_s(x)^{1/2}\psi_s(y)^{1/2}}}_{(\tau,\xi)}^2=\sigma_s*\sigma_s(\xi,\tau)
\end{equation}
as can be seen from \eqref{convolution-again} by taking $f\equiv 1$. Then
\begin{align*}
 \norma{f\sigma_s*f\sigma_s}_2^2&\leq\int_{\mathcal
P_{d,2}}\Norma{\frac{f(x)f(y)}{\psi_s(x)^{1/2}\psi_s(y)^{1/2}}}_{(\tau,\xi)}^2 \sigma_s*\sigma_s(\xi,\tau) d\tau\,d\xi\\
 &=\int_{\mathcal P_{d,2}}\int_{(\R^d)^2} \frac{f^2(x)f^2(y)}{\psi_s(x)\psi_s(y)}\ddirac{\tau-\psi_s(x)-\psi_s(y)\\
\xi-x-y}\sigma_s*\sigma_s(\xi,\tau)\,dx\,dy\,d\tau\,d\xi\\
 &=\int_{(\R^d)^2} \frac{f^2(x)f^2(y)}{\psi_s(x)\psi_s(y)}\int_{\mathcal P_{d,2}}\ddirac{\tau-\psi_s(x)-\psi_s(y)\\
\xi-x-y}\sigma_s*\sigma_s(\xi,\tau)\,d\tau\,d\xi\,dx\,dy\\
 &=\int_{(\R^d)^2} \frac{f^2(x)f^2(y)}{\psi_s(x)\psi_s(y)}\sigma_s*\sigma_s(x+y,\psi_s(x)+\psi_s(y))\,dx\,dy.
 \qedhere
\end{align*}
\end{proof}

\section{Nonexistence of extremizers}
In this section, we prove the first part of Theorem \ref{main-theorem} related to the best constants
and nonexistence of extremizers for the adjoint Fourier restriction inequality for 
$\mathbbm H^d$. We start with the computation of
the double and triple convolution of $\sigma_s$ with itself.

\begin{lemma}
\label{convolutions-d2}
 Let $d=2,\, s>0$, and let $\sigma_s$ be the measure on $\mathbbm H_s^2$ given in \eqref{measure-sigma-s}. Then for
$(\xi,\tau)\in\R^2\times\R$,
  \begin{align}
 \label{formula-double-convolution-d2p4}
  \sigma_s*\sigma_s(\xi,\tau)&=\frac{2\pi}{\sqrt{\tau^2-\ab{\xi}^2}}\chi_{\{\tau\geq \sqrt{(2s)^2+\ab{\xi}^2}\}},\\
  \label{formula-double-convolution-d2p6}
  \sigma_s*\sigma_s*\sigma_s(\xi,\tau)&=(2\pi)^2\biggl(1-\frac{3s}{\sqrt{\tau^2-\ab{\xi}^2}}\biggr)\chi_{\{\tau\geq
\sqrt{(3s)^2+\ab{\xi}^2}\}}.
 \end{align}
 In particular, $\norma{\sigma_s*\sigma_s}_{L^\infty(\R^3)}=\pi/s$, and
$\sigma_s*\sigma_s(\xi,\tau)<\norma{\sigma_s*\sigma_s}_{L^\infty(\R^3)}$ for all $(\xi,\tau)$ in the interior of the
support of $\sigma_s*\sigma_s$. Also, $\norma{\sigma_s*\sigma_s*\sigma_s}_{L^\infty(\R^3)}=(2\pi)^2$, and for all
$(\xi,\tau)\in\R^{d+1}$, $\sigma_s*\sigma_s*\sigma_s(\xi,\tau)<\norma{\sigma_s*\sigma_s*\sigma_s}_{L^\infty(\R^3)}$.
\end{lemma}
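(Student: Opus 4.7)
The overarching strategy is to exploit the Lorentz invariance developed in Section \ref{section-on-lorentz} to reduce the computation of $\sigma_s*\sigma_s(\xi,\tau)$ and $\sigma_s*\sigma_s*\sigma_s(\xi,\tau)$ to the case $\xi=0$, where the integrals become rotationally symmetric in each factor and can be handled in one real variable. Convolutions of $\sigma_s$ with itself are Lorentz invariant because $\sigma_s$ is, so both sides of the proposed formulas depend on $(\xi,\tau)$ only through $\sqrt{\tau^2-|\xi|^2}$. It therefore suffices to verify each identity along the ray $\xi=0$, $\tau>0$.

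\textbf{Double convolution.} Writing
\[
\sigma_s*\sigma_s(0,\tau)=\int_{\R^2}\delta(\tau-2\psi_s(x))\,\frac{dx}{\psi_s(x)^2},
\]
I pass to polar coordinates $r=|x|$ and substitute $u=\psi_s(r)=\sqrt{s^2+r^2}$, so that $r\,dr=u\,du$ and the integrand reduces to $\delta(\tau-2u)\,du/u$ for $u\in[s,\infty)$. The delta peels off $u=\tau/2$ with Jacobian $1/2$, yielding $2\pi/\tau$ on $\{\tau\geq 2s\}$ and zero otherwise. By Lorentz invariance this gives \eqref{formula-double-convolution-d2p4}. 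The resulting function is strictly decreasing in $\sqrt{\tau^2-|\xi|^2}$, whence $\|\sigma_s*\sigma_s\|_{L^\infty}=\pi/s$ and the strict inequality on the interior of the support follow.

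\textbf{Triple convolution.} I use associativity of convolution, $\sigma_s^{(*3)}=(\sigma_s*\sigma_s)*\sigma_s$, and substitute \eqref{formula-double-convolution-d2p4} already proved:
\[
\sigma_s^{(*3)}(0,\tau)=2\pi\int_{\R^2}\frac{\chi_{\{\tau-\psi_s(x)\geq\sqrt{(2s)^2+|x|^2}\}}}{\sqrt{(\tau-\psi_s(x))^2-|x|^2}}\,\frac{dx}{\psi_s(x)}.
\]
After polar coordinates and the same substitution $u=\psi_s(r)$, the square root simplifies because $(\tau-u)^2-(u^2-s^2)=\tau^2-2\tau u+s^2$, and the indicator becomes $u\leq(\tau^2-3s^2)/(2\tau)$, nonempty exactly when $\tau\geq 3s$. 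The remaining integral
\[
(2\pi)^2\int_{s}^{(\tau^2-3s^2)/(2\tau)}\frac{du}{\sqrt{\tau^2-2\tau u+s^2}}
\]
is elementary via $v=\tau^2-2\tau u+s^2$: the new limits are $(\tau-s)^2$ and $4s^2$, and the evaluation yields $(2\pi)^2(\tau-3s)/\tau$. Lorentz invariance upgrades this to \eqref{formula-double-convolution-d2p6}. Since $1-3s/\sqrt{\tau^2-|\xi|^2}$ is strictly less than $1$ on the support and tends to $1$ only as $\sqrt{\tau^2-|\xi|^2}\to\infty$, one reads off $\|\sigma_s^{(*3)}\|_{L^\infty}=(2\pi)^2$, attained nowhere.

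\textbf{Expected difficulty.} The only nontrivial step is the triple-convolution computation: the two changes of variables (first $u=\psi_s(r)$ to simplify the measure, then $v=\tau^2-2\tau u+s^2$ to linearize the square root) have to be organized so that the indicator and the denominator convert together, and the endpoint $u=s$ corresponding to $v=(\tau-s)^2$ must be tracked. Once the Lorentz reduction is accepted, however, there is no geometric subtlety and the answer drops out as an elementary integral.
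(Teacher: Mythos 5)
Your proof is correct and follows essentially the same route as the paper: reduce to $\xi=0$ by Lorentz invariance, compute the double convolution in polar coordinates with the substitution $u=\psi_s(r)$, then feed the resulting formula back into the triple convolution and perform the same quadratic simplification $(\tau-u)^2-(u^2-s^2)=\tau^2-2\tau u+s^2$. The only differences (whether the substitution absorbs the factor of $2$, and whether the final integral is evaluated by a second substitution or by writing down the antiderivative directly) are cosmetic.
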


\begin{proof}
   It is easy to compute the convolution
\begin{align*}
 \sigma_s*\sigma_s(0,\tau)&=\int_{\R^2}
\delta(\tau-2\sqrt{s^2+\ab{y}^2})\frac{dy}{s^2+\ab{y}^2}=2\pi\int_0^\infty\delta(\tau-2\sqrt{s^2+r^2})\frac{rdr}{s^2+r^2}.
\end{align*}
Let $u=2\sqrt{s^2+r^2}$. Then
\begin{align*}
 \sigma_s*\sigma_s(0,\tau)&=2\pi\int_{2s}^\infty \delta(\tau-u)\frac{du}{u}=\frac{2\pi}{\tau}\chi_{\{\tau\geq2s\}}.
\end{align*}
By Lorentz invariance, we obtain
\[\sigma_s*\sigma_s(\xi,\tau)=\frac{2\pi}{\sqrt{\tau^2-\ab{\xi}^2}}\chi_{\{\tau\geq \sqrt{(2s)^2+\ab{\xi}^2}\}}.\]
To compute the triple convolution, we use the expression we just obtained for the double convolution, which yields
\begin{align*}
 \sigma_s*\sigma_s*\sigma_s(0,\tau)&=\int_{\R^2} \sigma*\sigma(-y,\tau-\sqrt{s^2+\ab{y}^2})\frac{dy}{\sqrt{s^2+\ab{y}^2}}\\
 &=(2\pi)^2\int_0^\infty \frac{\chi_{\{\tau-\sqrt{s^2+r^2}\geq
\sqrt{(2s)^2+r^2}\}}}{\sqrt{(\tau-\sqrt{s^2+r^2})^2-r^2}}\frac{rdr}{\sqrt{s^2+r^2}}.
\end{align*}
Let $u=\sqrt{s^2+r^2}$. Then
\begin{align*}
 \sigma_s*\sigma_s*\sigma_s(0,\tau)&=(2\pi)^2\chi_{\{\tau\geq 3s\}}\int_s^{\frac{\tau^2-3s^2}{2\tau}} \frac{du}{\sqrt{(\tau-u)^2-(u^2-s^2)}}\\
 &=(2\pi)^2\chi_{\{\tau\geq 3s\}}\int_s^{\frac{\tau^2-3s^2}{2\tau}} \frac{du}{\sqrt{\tau^2-2\tau u+s^2}}\\
 &=(2\pi)^2\biggl(1-\frac{3s}{\tau}\biggr)\chi_{\{\tau\geq 3s\}}.
\end{align*}
By Lorentz invariance,
\[\sigma_s*\sigma_s*\sigma_s(\xi,\tau)=(2\pi)^2\biggl(1-\frac{3s}{\sqrt{\tau^2-\ab{\xi}^2}}\biggr)\chi_{\{\tau\geq\sqrt{(3s)^2+\ab
{ \xi}^2}\}}.\qedhere\]
\end{proof}

\begin{lemma}
\label{triple-convolution-d3}
 Let $d=3$ and $s>0$. Then for $(\xi,\tau)\in\R^3\times\R$,
 \begin{equation}
 \label{formula-double-convolution-d3p6}
  \sigma_s*\sigma_s(\xi,\tau)=2\pi\biggl(1-\frac{4s^2}{\tau^2-\ab{\xi}^2}\biggr)^{1/2}\chi_{\{\tau\geq
\sqrt{(2s)^2+\ab{\xi}^2}\}}.
  \end{equation}
 In particular, $\norma{\sigma_s*\sigma_s}_{L^\infty(\R^4)}=2\pi$, and for all $(\xi,\tau)\in \R^4$,
\[\sigma_s*\sigma_s(\xi,\tau)<\norma{\sigma_s*\sigma_s}_{L^\infty(\R^4)}.\]
\end{lemma}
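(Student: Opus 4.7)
The proof should closely mirror the computations in Lemma~\ref{convolutions-d2}, replacing the two-dimensional integrals by three-dimensional ones. The plan is as follows.

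First, by the discussion in Section~\ref{section-on-lorentz}, any point $(\xi,\tau)$ with $\tau>\sqrt{(2s)^2+|\xi|^2}$ can be mapped to $(0,\sqrt{\tau^2-|\xi|^2})$ by a Lorentz transformation in $\mathcal L^+$. Since $\sigma_s$ is invariant under this action, so is $\sigma_s*\sigma_s$. Hence it suffices to compute $\sigma_s*\sigma_s(0,\tau)$ for $\tau>2s$ and then replace $\tau$ by $\sqrt{\tau^2-|\xi|^2}$ in the formula.

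Next, writing the double convolution in the intrinsic coordinates on $\mathbbm H^3_s$,
\[
\sigma_s*\sigma_s(0,\tau)=\int_{\R^3}\delta\bigl(\tau-2\psi_s(y)\bigr)\,\frac{dy}{\psi_s(y)^{2}}.
\]
Since the integrand is radial, I pass to spherical coordinates with $r=|y|$ (so $dy=4\pi r^2\,dr$) and substitute $u=2\psi_s(r)=2\sqrt{s^2+r^2}$. Then $r=\tfrac{1}{2}\sqrt{u^2-4s^2}$, $s^2+r^2=u^2/4$, and a short computation gives $dr=\frac{u}{2\sqrt{u^2-4s^2}}\,du$. The integral collapses to
\[
\sigma_s*\sigma_s(0,\tau)=4\pi\int_{2s}^{\infty}\frac{\sqrt{u^2-4s^2}}{2u}\,\delta(\tau-u)\,du
=2\pi\,\sqrt{1-\frac{4s^2}{\tau^2}}\,\chi_{\{\tau>2s\}},
\]
from which \eqref{formula-double-convolution-d3p6} follows by Lorentz invariance.

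Finally, the formula \eqref{formula-double-convolution-d3p6} displays $\sigma_s*\sigma_s$ as a strictly increasing function of $\tau^2-|\xi|^2$ on its support, with supremum $2\pi$ obtained only in the limit $\tau^2-|\xi|^2\to\infty$. Therefore $\norma{\sigma_s*\sigma_s}_{L^\infty(\R^4)}=2\pi$ and the strict inequality $\sigma_s*\sigma_s(\xi,\tau)<2\pi$ holds for every $(\xi,\tau)\in\R^4$, which is the last claim of the lemma. No step is especially delicate; the only point requiring minor care is the Jacobian in the substitution $u=2\psi_s(r)$, where the extra factor of $r^2$ coming from the three-dimensional surface element (compared with the two-dimensional case) produces the square-root factor instead of the $1/\tau$ factor appearing in \eqref{formula-double-convolution-d2p4}.
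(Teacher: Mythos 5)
Your proof is correct and follows essentially the same route as the paper: compute $\sigma_s*\sigma_s(0,\tau)$ in spherical coordinates via the substitution $u=2\sqrt{s^2+r^2}$, then extend to general $(\xi,\tau)$ by Lorentz invariance, and read off the strict inequality from the explicit formula. The only cosmetic difference is that you factor out $4\pi r^2$ and carry the Jacobian explicitly, while the paper absorbs the $2\pi$ directly; the computations are identical.
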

 \begin{proof}
\begin{align*}
 \sigma_s*\sigma_s(0,\tau)=\int_{\R^3}\delta(\tau-2\sqrt{s^2+\ab{y}^2})\frac{dy}{s^2+\ab{y}^2}=4\pi\int_0^\infty\delta(\tau-2\sqrt{s^2+r^2})\frac{r^2dr}{s^2+r^2}.
\end{align*}
Let $u=2\sqrt{s^2+r^2}$. Then
\begin{align*}
 \sigma_s*\sigma_s(0,\tau)&=2\pi\int_{2s}^\infty\delta(\tau-u)\frac{\sqrt{u^2-4s^2}}{u}du=2\pi\frac{\sqrt{\tau^2-4s^2}}{\tau}\chi_{\{\tau\geq 2s\}}\\
 &=2\pi\biggl(1-\frac{4s^2}{\tau^2}\biggr)^{1/2}\chi_{\{\tau\geq 2s\}}.
\end{align*}
Therefore, by the Lorentz invariance,
\[\sigma_s*\sigma_s(\xi,\tau)=2\pi\biggl(1-\frac{4s^2}{\tau^2-\ab{\xi}^2}\biggr)^{1/2}\chi_{\{\tau\geq
\sqrt{(2s)^2+\ab{\xi}^2}\}}.\qedhere\]
\end{proof}

From Corollary \ref{general-upper-bound} and Lemmas \ref{convolutions-d2} and \ref{triple-convolution-d3}, we obtain the
following result.

\begin{cor}
 \label{upper-bound-best-constants}
 \[\mathbf{H}_{2,4}\leq 2^{3/4}\pi,\quad\mathbf{H}_{2,6}\leq (2\pi)^{5/6},\quad\text{ and }\quad\mathbf{H}_{3,4}\leq
(2\pi)^{5/4}.\]
\end{cor}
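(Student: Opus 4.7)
The proof is essentially an assembly step: Corollary~\ref{general-upper-bound} reduces the bound on $\mathbf{H}_{d,p}$ to knowing $\norma{\sigma^{(*k)}}_{L^\infty(\R^{d+1})}$, and Lemmas~\ref{convolutions-d2} and~\ref{triple-convolution-d3} supply exactly those values at $s=1$. So my plan is simply to specialize \eqref{bound-for-H} at $s=1$ for each of the three pairs $(d,p)$ and to substitute the explicit $L^\infty$ norms coming from the two preceding lemmas.

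More concretely, I would proceed in three short cases. For $(d,p)=(2,4)$, so $k=2$, Lemma~\ref{convolutions-d2} (with $s=1$) gives $\norma{\sigma*\sigma}_{L^\infty(\R^3)}=\pi$, and \eqref{bound-for-H} yields
\[
\mathbf{H}_{2,4}\leq (2\pi)^{3/4}\pi^{1/4}=2^{3/4}\pi.
\]
For $(d,p)=(2,6)$, so $k=3$, Lemma~\ref{convolutions-d2} gives $\norma{\sigma*\sigma*\sigma}_{L^\infty(\R^3)}=(2\pi)^2$, hence
\[
\mathbf{H}_{2,6}\leq (2\pi)^{1/2}\bigl((2\pi)^2\bigr)^{1/6}=(2\pi)^{5/6}.
\]
For $(d,p)=(3,4)$, so $k=2$, Lemma~\ref{triple-convolution-d3} gives $\norma{\sigma*\sigma}_{L^\infty(\R^4)}=2\pi$, hence
\[
\mathbf{H}_{3,4}\leq (2\pi)^{4/4}(2\pi)^{1/4}=(2\pi)^{5/4}.
\]

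There is no real obstacle here: all the analytical work sits inside Lemmas~\ref{convolution-foschi}, \ref{convolutions-d2} and~\ref{triple-convolution-d3}. The only points to double-check are the bookkeeping of exponents in \eqref{bound-for-H} for each $(d,p,k)$, and that the suprema recorded in the two lemmas are attained at the appropriate boundary behavior so that the $L^\infty$ norms are as stated. Once these substitutions are written out, the corollary is proved.
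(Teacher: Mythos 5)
Your proposal is correct and matches the paper's argument exactly: the paper states the corollary as an immediate consequence of Corollary~\ref{general-upper-bound}, Lemma~\ref{convolutions-d2}, and Lemma~\ref{triple-convolution-d3}, and your three substitutions into \eqref{bound-for-H} at $s=1$ are precisely that assembly, with the exponent bookkeeping carried out correctly.
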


To obtain the lower bound for the best constants, we exhibit explicit extremizing sequences.

\begin{lemma}
\label{extremizing-sequence-d2}
 Let $d=2$ and $s>0$. For $a>0$, let $f_a(y)=e^{-a\sqrt{s^2+\ab{y}^2}}$, $y\in\R^2$. Then
 \begin{align}
  \label{limit-extremizing-sequence-d2p4}
  \lim\limits_{a\to\infty}\norma{T_s f_a}_{L^4(\R^3)}\norma{f_a}_{L^2(\mathbbm H^2_s)}^{-1}&=\frac{2^{3/4}\pi}{s^{1/4}},\\
 \label{extremizing-sequence-d2p6}
 \lim\limits_{a\to0^+}\norma{T_sf_a}_{L^6(\R^3)}\norma{f_a}_{L^2(\mathbbm H^2_s)}^{-1}&=(2\pi)^{5/6}.
  \end{align}
\end{lemma}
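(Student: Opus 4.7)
The strategy rests on the convolution form $\norma{T_sf_a}_{L^{2k}(\R^3)}^{2k}=(2\pi)^{3}\norma{(f_a\sigma_s)^{(*k)}}_{L^2(\R^3)}^{2}$ and one elementary factorization. On the support of $\mu_{(\xi,\tau)}$ one has $\psi_s(x_1)+\dots+\psi_s(x_k)=\tau$, so since $f_a(y)=e^{-a\psi_s(y)}$ the product $f_a(x_1)\cdots f_a(x_k)$ equals $e^{-a\tau}$ there, giving
\[
(f_a\sigma_s)^{(*k)}(\xi,\tau)=e^{-a\tau}\sigma_s^{(*k)}(\xi,\tau).
\]
Combined with the explicit formulas from Lemma~\ref{convolutions-d2}, this reduces both $L^2$-norms to one-dimensional integrals in $\tau$. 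In addition, polar coordinates together with the substitution $u=\psi_s(r)$ yield $\norma{f_a}_{L^2(\mathbbm H^2_s)}^2 = \pi e^{-2as}/a$.

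\textbf{Case $p=4$, $a\to\infty$.} Plugging \eqref{formula-double-convolution-d2p4} into the factorization and integrating over $\xi$ at fixed $\tau\geq 2s$ (via the substitution $w=\tau^2-|\xi|^2$) produces the inner integral $2\pi\log(\tau/(2s))$, so
\[
\norma{f_a\sigma_s*f_a\sigma_s}_{L^2}^2 = (2\pi)^3\int_{2s}^\infty e^{-2a\tau}\log(\tau/(2s))\,d\tau.
\]
The rescaling $\tau=2s+v/(2a)$ extracts $e^{-4as}/(2a)$, and dominated convergence (using $\log(1+t)\leq t$, with limit $4as\log(1+v/(4as))\to v$) gives $\int_0^\infty e^{-v}\log(1+v/(4as))\,dv\sim 1/(4as)$. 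Hence $\norma{f_a\sigma_s*f_a\sigma_s}_{L^2}^2\sim (2\pi)^3 e^{-4as}/(8sa^2)$, and dividing by $\norma{f_a}_{L^2}^4=\pi^2 e^{-4as}/a^2$ and taking fourth roots gives the claimed limit $2^{3/4}\pi/s^{1/4}$.

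\textbf{Case $p=6$, $a\to 0^+$.} Starting from \eqref{formula-double-convolution-d2p6}, the substitution $w=\sqrt{\tau^2-|\xi|^2}$ turns the $\xi$-integral at fixed $\tau\geq 3s$ into the elementary $2\pi\int_{3s}^\tau (w-3s)^2/w\,dw = \pi[(\tau-3s)(\tau-9s)+18s^2\log(\tau/(3s))]$, so
\[
\norma{(f_a\sigma_s)^{(*3)}}_{L^2}^2=(2\pi)^5\int_{3s}^\infty e^{-2a\tau}\bigl[\tfrac{1}{2}(\tau-3s)(\tau-9s)+9s^2\log(\tau/(3s))\bigr]d\tau.
\]
As $a\to 0^+$ the dominant contribution comes from the $\tau^2/2$ term, which gives $1/(8a^3)$, while the remaining contributions are $O(a^{-2})$. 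Combining with $\norma{f_a}_{L^2}^6\sim \pi^3/a^3$, the ratio raised to the sixth power tends to $(2\pi)^8/(8\pi^3)=(2\pi)^5$, yielding $(2\pi)^{5/6}$.

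\textbf{Main obstacle.} The real work is keeping track of the powers of $2\pi$ and handling the two asymptotic regimes correctly: $a\to\infty$ concentrates the mass at the tip $\tau=2s$ of the support (Laplace-type localization), whereas $a\to 0^+$ exploits slow decay over the entire forward cone. Both reduce to routine one-dimensional asymptotics once the factorization $(f_a\sigma_s)^{(*k)}=e^{-a\tau}\sigma_s^{(*k)}$ is recognised; dominated convergence controls the error terms in each regime.
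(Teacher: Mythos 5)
Your proposal is correct and follows essentially the same path as the paper's Appendix~2: you use the convolution form, the factorization $(f_a\sigma_s)^{(*k)}=e^{-a\tau}\sigma_s^{(*k)}$ (which comes from $f_a(x_1)\cdots f_a(x_k)=e^{-a\sum\psi_s(x_i)}=e^{-a\tau}$ on the support of $\mu_{(\xi,\tau)}$), and the explicit convolution formulas, reducing to one-dimensional $\tau$-integrals exactly as in Lemma~\ref{explicit-computation-functional-linear}. The only difference is in the final step: the paper evaluates those $\tau$-integrals in closed form in terms of the exponential integral $\Ei$ and then passes to the limit, whereas you bypass $\Ei$ entirely and extract the asymptotics directly via a Laplace-type rescaling $\tau=2s+v/(2a)$ for $a\to\infty$ (localization at the tip of the cone) and a dominant-term argument for $a\to 0^+$ (the $\tfrac12\tau^2$ contribution $\sim 1/(8a^3)$). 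Both are valid; yours is slightly more elementary, the paper's yields the sharper exact identities $\norma{T_sf_a}_{L^6}^6\norma{f_a}_2^{-6}=(2\pi)^5(1-6as-36a^2s^2e^{6as}\Ei(-6as))$ and $\norma{T_sf_a}_{L^4}^4\norma{f_a}_2^{-4}=8\pi^4s^{-1}(-4ase^{4as}\Ei(-4as))$ as a byproduct, which the paper also uses to remark on monotonicity in $a$. One purely expository note: in the $p=4$ computation you write that dividing $\norma{f_a\sigma_s\!*\!f_a\sigma_s}_{L^2}^2\sim(2\pi)^3e^{-4as}/(8sa^2)$ by $\norma{f_a}_2^4$ and taking fourth roots gives the limit; strictly, one must also multiply by the conversion factor $(2\pi)^3$ before taking the fourth root, so that $\norma{T_sf_a}_{L^4}^4\norma{f_a}_2^{-4}\to(2\pi)^3\cdot\pi/s=8\pi^4/s$ and $(8\pi^4/s)^{1/4}=2^{3/4}\pi s^{-1/4}$. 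You clearly have this in mind (you state the conversion in the plan and apply it correctly in the $p=6$ case), so this is just a matter of wording, not a gap.
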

The proof of this is given in Appendix 2. For the case $d=3$, we have an analogous result.
\begin{lemma}
\label{linear-function-four}
 Let $d=3$ and $s>0$. For $a>0$, let $f_a(y)=e^{-a\sqrt{s^2+\ab{y}^2}}$, $y\in\R^3$. Then
 \[\lim\limits_{a\to 0^+}\norma{T_sf_a}_{L^4(\R^4)}\norma{f_a}_{L^2(\mathbbm H^3_s)}^{-1}=(2\pi)^{5/4}.\]
\end{lemma}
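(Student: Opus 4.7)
The plan is to compute $\norma{T_s f_a}_{L^4(\R^4)}^4$ and $\norma{f_a}_{L^2(\sigma_s)}^2$ as explicit functions of $a$ and extract their asymptotic behaviour as $a \to 0^+$. By \eqref{convolution-form-hyperboloid}, $\norma{T_s f_a}_{L^4}^4 = (2\pi)^4 \norma{f_a\sigma_s * f_a\sigma_s}_{L^2}^2$. Since $f_a(y) = e^{-a\psi_s(y)}$ depends only on $\psi_s(y)$, on the support of the convolution $\psi_s(x_1)+\psi_s(x_2) = \tau$, and therefore
\[(f_a\sigma_s * f_a\sigma_s)(\xi,\tau) = e^{-a\tau}(\sigma_s * \sigma_s)(\xi,\tau) = 2\pi e^{-a\tau}\Bigl(1-\frac{4s^2}{\tau^2-\ab{\xi}^2}\Bigr)^{1/2}\chi_{\{\tau\geq\sqrt{4s^2+\ab{\xi}^2}\}},\]
by the explicit formula in Lemma \ref{triple-convolution-d3}.

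Next I would introduce Lorentz-adapted coordinates $\tau = m\cosh\beta$, $\ab{\xi} = m\sinh\beta$ on the interior of the forward light cone, with $m \geq 2s$ and $\beta \geq 0$. Combined with spherical coordinates in $\xi\in\R^3$, the Jacobian gives $d\xi\,d\tau = 4\pi m^3\sinh^2\beta\,dm\,d\beta$, hence
\[\norma{f_a\sigma_s*f_a\sigma_s}_{L^2}^2 = 16\pi^3\int_{2s}^\infty\Bigl(1-\frac{4s^2}{m^2}\Bigr)m^3 \int_0^\infty e^{-2am\cosh\beta}\sinh^2\beta\,d\beta\,dm.\]
A similar parametrisation of $\mathbbm H^3_s$ by $\psi_s(y) = s\cosh\theta$ yields
\[\norma{f_a}_{L^2(\sigma_s)}^2 = 4\pi s^2\int_0^\infty e^{-2as\cosh\theta}\sinh^2\theta\,d\theta.\]
Each inner integral is evaluated in closed form using the standard identity $\int_0^\infty e^{-z\cosh t}\sinh^2 t\,dt = K_1(z)/z$ for the modified Bessel function.

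Substituting $v = 2am$ in the first integral reduces matters to
\[\norma{f_a\sigma_s*f_a\sigma_s}_{L^2}^2 = \frac{\pi^3}{a^4}\int_{4as}^\infty(v^2-16a^2s^2)K_1(v)\,dv,\qquad \norma{f_a}_{L^2(\sigma_s)}^2 = \frac{2\pi s}{a}K_1(2as).\]
Using $K_1(z)\sim 1/z$ as $z\to 0^+$ and the moment identity $\int_0^\infty v^2 K_1(v)\,dv = 2$, one obtains $\norma{f_a\sigma_s*f_a\sigma_s}_{L^2}^2 = (2\pi^3/a^4)(1+o(1))$ and $\norma{f_a}_{L^2(\sigma_s)}^2 = (\pi/a^2)(1+o(1))$ as $a\to 0^+$. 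Taking the ratio gives $\norma{T_s f_a}_{L^4}^4/\norma{f_a}_{L^2(\sigma_s)}^4 \to (2\pi)^4\cdot 2\pi^3/\pi^2 = (2\pi)^5$, and the claim follows on taking fourth roots. The main technical point is controlling the correction term: $\int_{4as}^\infty K_1(v)\,dv = O(\log(1/a))$ from the logarithmic singularity of $K_1$ at the origin, but the prefactor $16a^2s^2$ converts this into an $O(a^2\log(1/a))$ error that vanishes in the limit; everything else is bookkeeping of constants through the changes of variables.
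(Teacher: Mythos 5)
Your proof is correct and yields the right limit. However, you take a genuinely different route from the paper. The paper's computation is elementary: it writes $\norma{f_a}_{L^2(\sigma_s)}^2=4\pi\int_s^\infty e^{-2au}\sqrt{u^2-s^2}\,du$ via the substitution $u=\psi_s(r)$, and for the convolution term it integrates in cylindrical coordinates $(\tau,r,\omega)$, carries out the $r$-integral over $[0,\sqrt{\tau^2-4s^2}]$ by hand (picking up an explicit $\log$ term along the way), rescales $\tau\mapsto\tau/a$, and passes to the limit via dominated convergence, landing on $\tfrac{16\pi^3}{3}\int_0^\infty e^{-2\tau}\tau^3\,d\tau=2\pi^3$. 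You instead parametrize by the Lorentz-invariant mass $m=\sqrt{\tau^2-\ab{\xi}^2}$ and rapidity $\beta$, which collapses the angular/rapidity integral into the Macdonald function $K_1$ and produces exact closed forms $\norma{f_a}_{L^2(\sigma_s)}^2=2\pi s K_1(2as)/a$ and $\norma{f_a\sigma_s*f_a\sigma_s}_{L^2}^2=\frac{\pi^3}{a^4}\int_{4as}^\infty(v^2-16a^2s^2)K_1(v)\,dv$; the limit then drops out of the moment identity $\int_0^\infty v^2K_1(v)\,dv=2$ and the small-argument asymptotic $K_1(z)\sim 1/z$. Your version makes the Lorentz structure explicit and gives exact formulas at every stage, at the cost of invoking Bessel-function identities; the paper's version needs only elementary calculus. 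One small slip in your write-up: $K_1$ has a simple pole $1/v$ at the origin, not a logarithmic singularity (that is $K_0$). The logarithm appears only after integration, so the estimate $\int_{4as}^\infty K_1(v)\,dv=O(\log(1/a))$ and the ensuing $O(a^2\log(1/a))$ error term are nonetheless correct.
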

The proof of Lemma \ref{linear-function-four} is in Appendix 3.

Note that Corollary \ref{upper-bound-best-constants} and Lemmas \ref{extremizing-sequence-d2} and
\ref{linear-function-four}
imply that for $(d,p)=(2,4)$, the family $\{f_a/\norma{f_a}_{L^2(\sigma_s)}\}_{a>0}$ is an extremizing family as $a\to\infty$,
while for $(d,p)=(2,3)$, $\{f_a/\norma{f_a}_{L^2(\sigma_s)}\}_{a>0}$ is an extremizing family as $a\to 0^+$, and for
$(d,p)=(3,6)$,
$\{f_a/\norma{f_a}_{L^2(\sigma_s)}\}_{a>0}$ is an extremizing family as $a\to  0^+$. An extremizing family $\{f_a\}_{a>0}$ is
defined as in Definition \ref{definition-extremizer}, where we replace the limit in $n$ by a limit in $a$.

We now give the proof of the part of Theorem \ref{main-theorem} related to the best constants and extremizers for the adjoint
Fourier restriction inequality on $\mathbbm{H}_s^d$; the proof of the second part, related to the two-sheeted
hyperboloid $\bar{\mathbbm{H}}_s^d$, is deferred to Section \ref{two-sheeted}.

\begin{proof}[Proof of the first part of Theorem \ref{main-theorem}]
Combining Corollary \ref{upper-bound-best-constants} and Lemmas \ref{extremizing-sequence-d2} and
\ref{linear-function-four}, we obtain the first part of Theorem \ref{main-theorem}, namely
\[\mathbf{H}_{2,4}= 2^{3/4}\pi,\quad\mathbf{H}_{2,6}= (2\pi)^{5/6},\quad\text{ and }\quad\mathbf{H}_{3,6}= (2\pi)^{5/4}.\]
That extremizers do not exist is a consequence of Corollary \ref{nonexistence-tool} and the last assertions about the infinity
norm of the double and triple convolutions of $\sigma$ with itself, contained in Lemmas \ref{convolutions-d2} and 
\ref{triple-convolution-d3}.
\end{proof}

We now prove the assertion given in the Introduction about extremizers for the truncated operator $T_\rho$ for $d=2$ and $p=4$.
\begin{prop}
\label{result_truncated_hyperboloid}
 Let $(d,p)=(2,4)$ and $s>0$. For any $\rho>0$, the best constant in inequality \eqref{restricted-hyperboloid} equals
$2^{3/4}\pi/s^{1/4}$,
and there are no extremizers for inequality \eqref{restricted-hyperboloid}.
\end{prop}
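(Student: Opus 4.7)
My plan has three parts, all of which leverage scaling together with the first part of Theorem \ref{main-theorem} proved above. For the upper bound, any $f\in L^2(\mathbbm H^2_{s,\rho})$ extends by zero to $\tilde f\in L^2(\mathbbm H^2_s)$ with $T_\rho f=T_s\tilde f$ and $\|\tilde f\|_{L^2(\sigma_s)}=\|f\|_{L^2(\mathbbm H^2_{s,\rho})}$, so the best constant for $T_\rho$ is at most $\mathbf H_{2,4,s}=2^{3/4}\pi s^{-1/4}$, the latter coming from the scaling identity \eqref{scaling} applied to Theorem \ref{main-theorem}. For nonexistence, if $F$ were an extremizer on $\mathbbm H^2_{s,\rho}$, then its zero extension $\tilde F$ would be an extremizer for the untruncated inequality \eqref{restriction-hyperboloid-s}, contradicting the first part of Theorem \ref{main-theorem}.

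The actual content is the matching lower bound. I would recycle the concentrating family $f_a(y)=e^{-a\psi_s(y)}$ from Lemma \ref{extremizing-sequence-d2} (taken with $a\to\infty$, which is the relevant regime for $p=4$) and check that the restriction $|y|\leq\rho$ is asymptotically invisible. A direct computation via the substitution $u=\psi_s(r)$ gives
\[
\|f_a\|_{L^2(\sigma_s)}^2=\frac{\pi}{a}e^{-2as},\qquad \int_{|y|>\rho}e^{-2a\psi_s(y)}\,\frac{dy}{\psi_s(y)}=\frac{\pi}{a}e^{-2a\sqrt{s^2+\rho^2}},
\]
so the tail in $L^2(\sigma_s)$ is smaller than the whole by a factor $e^{-a(\sqrt{s^2+\rho^2}-s)}$, which vanishes exponentially. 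Splitting
\[
T_s f_a=T_\rho\bigl(f_a\chi_{|y|\leq\rho}\bigr)+T_s\bigl(f_a\chi_{|y|>\rho}\bigr)
\]
and bounding the second term in $L^4(\R^3)$ by \eqref{restriction-hyperboloid-s} applied to its $L^2$ tail, one gets
\[
\frac{\|T_\rho(f_a\chi_{|y|\leq\rho})\|_{L^4(\R^3)}}{\|f_a\chi_{|y|\leq\rho}\|_{L^2(\sigma_s)}}=\bigl(1+o(1)\bigr)\,\frac{\|T_s f_a\|_{L^4(\R^3)}}{\|f_a\|_{L^2(\sigma_s)}}\longrightarrow 2^{3/4}\pi s^{-1/4}
\]
as $a\to\infty$, by Lemma \ref{extremizing-sequence-d2}. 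This matches the upper bound, so the best constant equals $2^{3/4}\pi s^{-1/4}$, and the truncated family $\{f_a\chi_{|y|\leq\rho}\}$ serves as an extremizing sequence.

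The only mildly delicate step is the asymptotic in the previous display: one needs to verify that the exponential smallness of the $L^2$ tail, together with the $L^2\to L^4$ boundedness of $T_s$, ensures that neither the numerator nor the denominator feels the truncation in the limit. Beyond that, the proposition is a straightforward transfer of already established facts from the full to the truncated setting.
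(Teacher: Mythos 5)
Your proposal is correct and takes essentially the same approach as the paper: the upper bound and nonexistence are inherited from the full hyperboloid via extension by zero, and the matching lower bound is obtained by truncating the concentrating family $f_a$ from Lemma~\ref{extremizing-sequence-d2}. You simply fill in the explicit tail estimate that the paper summarizes as ``one easily sees''; the argument is the same.
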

\begin{proof}
 The nonexistence of extremizers for inequality \eqref{restricted-hyperboloid} follows from the nonexistence of extremizers for
inequality for \eqref{restriction-hyperboloid} once we prove that the best 
 constant for the truncated hyperboloid equals the best constant for the entire hyperboloid, $\mathbf{H}_{2,4,s}$. For this, we
need a lower bound. 
 
 Since the extremizing family $\{f_a/\norma{f_a}_{L^2(\sigma_s)}\}_{a>0}$ given in Lemma \ref{extremizing-sequence-d2}
concentrates at $y=0$
as $a\to\infty$, one easily sees that
\[T_\rho( f_a\chi_{\{\ab{y}\leq \rho\}}/\norma{f_a\chi_{\{\ab{y}\leq \rho\}}}_{L^2(\sigma_s)})\to  2^{3/4}\pi/s^{1/4}\;\text{, as
}\;a\to\infty\]
 for the family $\{f_a\chi_{\{\ab{y}\leq \rho\}}/\norma{f_a\chi_{\{\ab{y}\leq \rho\}}}_{L^2(\sigma_s)}\}_{a> 0}$. This gives the
desired
lower bound.
\end{proof}

\section{On extremizing sequences}

In this section, we obtain some general
properties concerning concentration of extremizing sequences for inequality \eqref{restriction-hyperboloid} for the cases
$(d,p)=(2,4),(2,6)$ and $(3,4)$. 

The Lorentz
invariance of $\sigma_s$ implies that given an extremizing sequence $\{f_n\}_{n\in\N}$ for inequality
\eqref{restriction-hyperboloid-s} and a sequence of Lorentz transformations $\{L_n\}_{n\in \N}$ preserving $\mathbbm H^d_s$,
$\{f_n\circ L_n\}_{n\in\N}$ is also an extremizing sequence. In this section, it is only in the
case $(d,p)=(2,4)$ that the Lorentz group is used explicitly, but an equivalent result can be written without it, as discussed
before the statement of Proposition \ref{spatial-infinity-d2p4-with-ds}.

Consider first the case $d=2$ and $p=6$. From Lemma \ref{extremizing-sequence-d2}, it follows that the family of functions
$\{f_a/\norma{f_a}_2\}_{a>0}$ is an extremizing family as $a\to 0^+$. This particular extremizing family concentrates at
spatial infinity, that is, for every $\eps,R>0$, there exists $a_0>0$ such that for all $0<a<a_0$,
$\norma{f_a/\norma{f_a}_2}_{L^2(B(0,R))}<\eps$, where $B(0,R)=\{y\in\R^2:\ab{y}<R\}$. Next we show that this is the case for every
extremizing sequence.

\begin{prop}
\label{spatial-infinity-three}
 Let $\{f_n\}_{n\in\N}$ be an extremizing sequence for inequality \eqref{restriction-hyperboloid-s} in the case $(d,p)=(2,6)$.
Then
for any $\eps,R>0$, there exists $N\in\N$ such that for $n\geq N$,
 \[\norma{f_n}_{L^2(B(0,R))}<\eps,\]
 that is, the sequence concentrates at spatial infinity.
\end{prop}
\begin{proof}
Let $\eps,R>0$ be given. From the proof of Lemma \ref{convolution-foschi-with-measure} and from Lemma \ref{convolutions-d2},  we
have for the inequality in convolution form
\begin{align*}
 \norma{f_n\sigma_s*&f_n\sigma_s*f_n\sigma_s}_{L^2(\R^3)}^2\\
 &\leq \int_{\mathcal
P_{2,3}}\Norma{\frac{f_n(x)f_n(y)f_n(z)}{\psi_s(x)^{1/2}\psi_s(y)^{1/2}\psi_s(z)^{1/2}}}_{(\tau,\xi)}^2
\sigma_s*\sigma_s*\sigma_s(\tau,\xi) d\tau d\xi \\
 &=(2\pi)^2\int_{\mathcal
P_{2,3}}\Norma{\frac{f_n(x)f_n(y)f_n(z)}{\psi_s(x)^{1/2}\psi_s(y)^{1/2}\psi_s(z)^{1/2}}}_{(\tau,\xi)}^2\biggl(1-\frac{
3s } { \sqrt{\tau^2-\ab{\xi}^2}}\biggr)d\tau d\xi\\
 &=(2\pi)^2\norma{f_n}_{L^2(\sigma_s)}^6-(2\pi)^2\int_{\mathcal
 P_{2,3}}\Norma{\frac{f_n(x)f_n(y)f_n(z)}{\psi_s(x)^{1/2}\psi_s(y)^{1/2}\psi_s(z)^{1/2}}}_{(\tau,\xi)}^2\frac{
3s\,d\tau d\xi}{\sqrt{\tau^2-\ab{\xi}^2}}.
 \end{align*}
 Since $\norma{f_n\sigma_s*f_n\sigma_s*f_n\sigma_s}_{L^2(\R^3)}^2\to (2\pi)^2$ as $n\to\infty$, we obtain 
 \begin{equation}
 \label{extremizing-sequence-to-zero}
  \int_{\mathcal
P_{2,3}}\Norma{\frac{f_n(x)f_n(y)f_n(z)}{\psi_s(x)^{1/2}\psi_s(y)^{1/2}\psi_s(z)^{1/2}}}_{(\tau,\xi)}^2\frac{
d\tau d\xi}{\sqrt{\tau^2-\ab{\xi}^2}}\to 0\,\text{ as }n\to\infty;
 \end{equation}
 and thus there exists $N\in \N$ such that for all $n\geq N$,
 \[\int_{\mathcal
P_{2,3}}\Norma{\frac{f_n(x)f_n(y)f_n(z)}{\psi_s(x)^{1/2}\psi_s(y)^{1/2}\psi_s(z)^{1/2}}}_{(\tau,\xi)}^2\frac{
d\tau d\xi}{\sqrt{\tau^2-\ab{\xi}^2}}< \frac{\eps}{3\sqrt{s^2+R^2}}.\]
 By Lemma \ref{convolution-foschi-with-measure}, 
 the expression in the left hand side can be written as
 \begin{align*}
  &\int_{\mathcal
P_{2,3}}\Norma{\frac{f_n(x)f_n(y)f_n(z)}{\psi_s(x)^{1/2}\psi_s(y)^{1/2}\psi_s(z)^{1/2}}}_{(\tau,\xi)}^2\frac{
d\tau d\xi}{\sqrt{\tau^2-\ab{\xi}^2}}\\
  &=\int_{(\R^2)^3} \frac{f_n^2(x)f_n^2(y)f_n^2(z)}{\psi_s(x)\psi_s(y)\psi_s(z)}\int_{\mathcal
P_{2,3}}\ddirac{\tau-\psi_s(x)-\psi_s(y)-\psi_s(z)\\ \xi-x-y-z}\frac{d\tau d\xi}{\sqrt{\tau^2-\ab{\xi}^2}}dxdydz\\
  &\geq \int_{(\R^2)^3} \frac{f_n^2(x)f_n^2(y)f_n^2(z)}{\psi_s(x)\psi_s(y)\psi_s(z)}\int_{\mathcal
P_{2,3}}\ddirac{\tau-\psi_s(x)-\psi_s(y)-\psi_s(z)\\ \xi-x-y-z}\frac{1}{\tau}d\tau d\xi\,dx\,dy\,dz\\
  &\geq \int_{(B(0,R))^3}
\frac{f_n^2(x)f_n^2(y)f_n^2(z)}{\psi_s(x)\psi_s(y)\psi_s(z)}\frac{dx\,dy\,dz}{\psi_s(x)+\psi_s(y)+\psi_s(z)}.
 \end{align*}
If $x,y,z\in B(0,R)$, then $3s<\psi_s(x)+\psi_s(y)+\psi_s(z)\leq 3\psi_s(R)=3\sqrt{s^2+R^2}$. Therefore, for all $n\geq N$,
\begin{align*}
\frac{\eps}{3\sqrt{s^2+R^2}}&>\int_{\mathcal
P_{2,3}}\Norma{\frac{f_n(x)f_n(y)f_n(z)}{\psi_s(x)^{1/2}\psi_s(y)^{1/2}\psi_s(z)^{1/2}}}_{(\tau,\xi)}^2\frac{
d\tau d\xi}{\sqrt{\tau^2-\ab{\xi}^2}}\\
&\geq \frac{1}{3\sqrt{s^2+R^2}}\norma{f_n}_{L^2(B(0,R))}^6;
\end{align*}
and so, $\sup_{n\geq N}\norma{f_n}_{L^2(B(0,R))}<\eps$ as desired.
\end{proof}

We now turn to the case $d=3$ and $p=4$. Here we can also prove the analog of Proposition \ref{spatial-infinity-three}, namely,
that extremizing sequences must concentrate at spatial infinity.

\begin{prop}
\label{spatial-infinity-four}
 Let $\{f_n\}_{n\in\N}$ be an extremizing sequence for inequality \eqref{restriction-hyperboloid-s} in the case $(d,p)=(3,4)$.
Then
for any $\eps,R>0$, there exists $N\in\N$ such that for all $n\geq N$,
 \[\norma{f_n}_{L^2(B(0,R))}<\eps,\]
 that is, the sequence concentrates at spatial infinity.
\end{prop}
\begin{proof}
 The proof follows the same lines as that of Proposition \ref{spatial-infinity-three}. Using the convolution form of the
inequality, we obtain the analog of equation \eqref{extremizing-sequence-to-zero},
 \[\int_{\mathcal
P_{3,2}}\Norma{\frac{f_n(x)f_n(y)f_n(z)}{\psi_s(x)^{1/2}\psi_s(y)^{1/2}\psi_s(z)^{1/2}}}_{(\tau,\xi)}
^2\biggl(1-\Bigl(1-\frac{4s^2}{\tau^2-\ab{\xi}^2}\Bigr)^{1/2}\biggr)d\tau d\xi\to 0\,\text{ as }n\to\infty.\]
 
 If we use the bound 
 \[1-\biggl(1-\frac{4s^2}{\tau^2-\ab{\xi}^2}\biggr)^{1/2}\geq 1-\biggl(1-\frac{4s^2}{\tau^2}\biggr)^{1/2}\]
 and the fact that $0<\psi_s(x)+\psi_s(y)\leq 2\psi_s(R)$ whenever $\ab{x},\ab{y}\leq R$, we obtain
 \begin{multline*}
  \int_{\mathcal
P_{3,2}}\Norma{\frac{f_n(x)f_n(y)f_n(z)}{\psi_s(x)^{1/2}\psi_s(y)^{1/2}\psi_s(z)^{1/2}}}_{(\tau,\xi)}
^2\biggl(1-\Bigl(1-\frac{4s^2}{\tau^2-\ab{\xi}^2}\Bigr)^{1/2}\biggr)d\tau d\xi\\
  \geq\biggl(1-\Bigl(\frac{R^2}{R^2+s^2}\Bigr)^{1/2}\biggr)\norma{f_n}_{L^2(B(0,R))}^2.
 \end{multline*}
The conclusion follows as in the proof of Proposition \ref{spatial-infinity-three}.
\end{proof}

We now analyze the last case $(d,p)=(2,4)$. Since $\sigma_s*\sigma_s(\xi,\tau)=\norma{\sigma_s*\sigma_s}_{L^\infty(\R^3)}$
whenever $\tau=\sqrt{(2s)^2+\ab{\xi}^2}$, that is, at the boundary of the support of $\sigma_s*\sigma_s$, it is not hard to see
that there are extremizing
sequences that concentrate at any given point in $\mathbbm H^2_s$. For the example of an extremizing sequence given in Lemma
\ref{extremizing-sequence-d2}, 
the concentration occurs at the vertex of the hyperboloid $(\xi,\tau)=(0,s)=:P$. We want to show that all extremizing sequences
concentrate.

Since every point in $\mathbbm H^2_s$ has an extremizing sequence concentrating at it, we can construct an
extremizing 
sequence that concentrates along any given sequence in $\mathbbm H^2_s$ in the sense that given a sequence
$\{y_n\}_{n\in\N}\subset
\mathbbm H^2_s$, there exists an extremizing sequence $\{f_n\}_{n\in\N}\subset L^2(\mathbbm H^2_s)$ with the property that for
every $\eps>0$ and $r>0$, there exists $N\in\N$ such that
\begin{equation}
 \label{concentration-along-sequence}
 \int_{\ab{y-y_n}>r}\ab{f_n(y)}^2\, d\sigma_s(y)\leq \eps
\end{equation}
 for all $n\geq N$. Equivalently, taking a Lorentz transformation $L_n\in \mathcal L^+$ with $L_n^{-1}(y_n)=(0,s)=P$ and using
the Lorentz
invariance of the measure $\sigma_s$, we can write \eqref{concentration-along-sequence} as
\[\int_{L_n^{-1}(\{y:\ab{y}>r\})+P}\ab{L_n^*f_n(y)}^2\, d\sigma_s(y)\leq \eps,\]
where $L_n^*f_n(y)=f_n(L_n y)$. We show next that this is the case for every extremizing sequence.

\begin{prop}
\label{spatial-infinity-d2p4-with-ds}
 Let $\{f_n\}_{n\in\N}$ be an extremizing sequence for inequality \eqref{restriction-hyperboloid-s} in the case $(d,p)=(2,4)$.
There
exists a sequence
$\{L_n\}_{n\in\N}\subset \mathcal{L}^+$ with the property that for all $\eps,r>0$, there exists $N\in\N$ such that
 \begin{equation}
  \label{concentration-ds-ball}
  \int_{\ab{y-P}>r}\ab{L_n^*f_n(y)}^2\,d\sigma_s(y)\leq \eps,
 \end{equation}
  for all
$n\geq N$, where $P=(0,s)$  is the vertex of the hyperboloid $\mathbbm H_s^2$.
\end{prop}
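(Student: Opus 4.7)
The plan is to combine the Cauchy--Schwarz bound of Lemma \ref{convolution-foschi-with-measure} with the explicit formula \eqref{formula-double-convolution-d2p4} and then run a concentration--compactness argument adapted to the action of $\mathcal L^+$. First I will reduce to $f_n\geq 0$ with $\norma{f_n}_{L^2(\sigma_s)}=1$, using the pointwise bound $\ab{f\sigma_s*f\sigma_s}\leq \ab{f}\sigma_s*\ab{f}\sigma_s$. Setting $\rho(x,y):=\psi_s(x)\psi_s(y)-x\cdot y$, the identity $(\psi_s(x)+\psi_s(y))^2-\ab{x+y}^2=2(s^2+\rho(x,y))$ turns \eqref{formula-double-convolution-d2p4} into $\sigma_s*\sigma_s(x+y,\psi_s(x)+\psi_s(y))=\sqrt{2}\,\pi/\sqrt{s^2+\rho(x,y)}$, whose maximum $\pi/s=\norma{\sigma_s*\sigma_s}_\infty$ is attained exactly on the diagonal $x=y$. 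Since the extremizing hypothesis forces $\norma{f_n\sigma_s*f_n\sigma_s}_2^2\to \pi/s$, both the Cauchy--Schwarz step in Lemma \ref{convolution-foschi-with-measure} and the trivial bound $\sigma_s*\sigma_s\leq \pi/s$ become asymptotically tight, yielding
\[
B_n:=\iint \Bigl(1-\frac{\sqrt{2}\,s}{\sqrt{s^2+\rho(x,y)}}\Bigr)\,d\mu_n(x)\,d\mu_n(y)\;\longrightarrow\;0,
\]
where $d\mu_n:=f_n^2\,d\sigma_s$ is a probability measure on $\mathbbm H_s^2$.

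Next I will exploit that $\rho$ is the Minkowski inner product of $(x,\psi_s(x))$ and $(y,\psi_s(y))$, hence $\mathcal L^+$-invariant, with $\rho\geq s^2$ and equality iff $x=y$. Writing $\rho/s^2=\cosh\te$, a half-angle computation identifies the integrand of $B_n$ with $1-\operatorname{sech}(\te/2)$, a nonnegative increasing function of $\te$ that behaves like $\te^2/8$ near $0$. For every $\te>0$, set $c_\te:=1-\operatorname{sech}(\te/2)>0$; then $(\mu_n\times\mu_n)(\{\rho\geq s^2\cosh\te\})\leq B_n/c_\te$, so a Chebyshev argument produces a point $p_n\in\mathbbm H_s^2$ with
\[
\mu_n(\{q:\rho(p_n,q)\leq s^2\cosh\te\})\geq 1-\sqrt{B_n/c_\te}.
\]
The decisive step is to shrink the radius with $n$: taking $\te=\te_n:=B_n^{1/4}$, the quadratic lower bound $c_\te\asymp\te^2$ gives $B_n/c_{\te_n}\asymp B_n^{1/2}\to 0$ while $\te_n\to 0$, producing a single sequence $\{p_n\}$ and Lorentz balls $U_n:=\{q:\rho(p_n,q)\leq s^2\cosh\te_n\}$ of shrinking radius with $\mu_n(U_n)\to 1$.

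Finally, by transitivity of $\mathcal L^+$ on $\mathbbm H_s^2$ (composition of an $R_A$ and an $L^t$ as in Section \ref{section-on-lorentz}), I pick $L_n\in\mathcal L^+$ with $L_n(P)=p_n$. Lorentz invariance of $\sigma_s$ and of $\rho$ transports the concentration to $P$: the measure $\ab{L_n^*f_n}^2\,d\sigma_s$ has mass at least $\mu_n(U_n)$ in $V_n:=\{y\in\mathbbm H_s^2:\rho(P,y)\leq s^2\cosh\te_n\}$. Since $\rho(P,y)=s\psi_s(y')$ for $y=(y',\psi_s(y'))$, the set $V_n$ reduces to $\{\ab{y'}\leq s\sinh\te_n\}$, which collapses to $\{P\}$ as $\te_n\to 0$; in particular $V_n$ is eventually contained in every Euclidean neighborhood $\{y:\ab{y-P}\leq r\}$, giving \eqref{concentration-ds-ball}.

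The main obstacle I anticipate is the passage from tightness to point concentration carried out in the second paragraph. A naive Chebyshev argument at a fixed $\te$ only produces a $\te$-dependent center $p_n^{(\te)}$, which after pullback would only yield concentration in a bounded-but-nonshrinking neighborhood of $P$, i.e.\ tightness. Closing this gap requires the quadratic vanishing $c_\te\asymp\te^2$ of the deficit at the diagonal together with the quantitative rate $B_n\to 0$; these ingredients together allow $\te_n\to 0$ at a compatible rate and are what distinguishes the conclusion of the proposition from a mere tightness statement.
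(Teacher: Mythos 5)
Your argument is correct and follows the same global strategy as the paper's proof: use Lemma \ref{convolution-foschi-with-measure} together with the explicit formula for $\sigma_s*\sigma_s$ to show that the Foschi deficit $B_n:=\iint\bigl(1-K_s(x,y)\bigr)h_n(x)h_n(y)\,dx\,dy\to 0$, extract from this a single sequence of concentration centers with radii tending to zero, and then transport to the vertex $P$ by Lorentz invariance and compare $d_s$-balls with Euclidean balls. The genuine divergence is in the extraction step. The paper proves, via Fubini, that $\sup_y\int_{B_{d_s}(y,\eps)}h_n\to 1$ for each fixed $\eps>0$ and then merges the $\eps$-dependent families $\{y_n^\eps\}$ into one sequence by a diagonal process over $\eps_k\downarrow 0$; you instead locate a center $p_n$ by averaging in the product (so that $\int_p\mu_n(\{q:\rho(p,q)>s^2\cosh\te\})\,d\mu_n(p)\leq B_n/c_\te$ already yields some $p_n$ with $\mu_n(\{q:\rho(p_n,q)>s^2\cosh\te\})\leq B_n/c_\te$ — your extra square root is a harmless loss) and then let the scale vary with $n$. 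The point that closes the gap is exactly what you flag: the quadratic vanishing $1-\operatorname{sech}(\te/2)\asymp\te^2$ near the diagonal, which is the content of Lemma \ref{properties-of-ds}(iii) in the paper, allows you to take $\te_n=B_n^{1/4}$ so that both the radius $\te_n$ and the escaping mass $B_n/c_{\te_n}$ go to zero simultaneously, and the diagonal process is avoided. Your route is slightly more streamlined at the cost of leaning on the explicit rate; the paper's diagonal argument is softer and would still work if the deficit only vanished at an unspecified rate. One point to make explicit in your write-up: the normalization $\norma{f_n}_{L^2(\sigma_s)}=1$ requires noting that for any extremizing sequence $\norma{f_n}_{L^2(\sigma_s)}\to 1$, so that $\mu_n$ is (asymptotically) a probability measure.
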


To prove this proposition, we introduce the function $d_s:\R^2\times\R^2\to\R$ given by the formula
\[d_s(x,y)=\frac{1}{2s}((\psi_s(x)+\psi_s(y))^2-\ab{x+y}^2)^{1/2}-1.\]
Elementary properties of $d_s$ are described in the next lemma, whose proof is left to the reader.

\begin{lemma}
\label{properties-of-ds}
\begin{enumerate}
  \item [(i)] For all $x,y\in\R^2$, $d_s(x,y)=d_s(y,x)\geq 0$, and $d_s(x,y)=0$ if and only if $x=y$.
  \item [(ii)] For all $x\in\R^2$, $\lim_{\ab{y}\to\infty}d_s(x,y)=\infty$.
  \item [(iii)] For every $R>0$, there exist $0<C_1(R),\,C_2(R)<\infty$ such that
  \[C_1(R)\ab{x-y}^2\leq d_s(x,y)\leq C_2(R)\ab{x-y},\]
  for all $x,y$ with $\ab{x},\ab{y}\leq R$.
\end{enumerate}
\end{lemma}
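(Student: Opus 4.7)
The plan is to introduce the auxiliary quantity $A(x,y):=\psi_s(x)\psi_s(y)-x\cdot y-s^2$, which packages everything cleanly. Expanding $\psi_s(\cdot)^2=s^2+\ab{\cdot}^2$ gives the identity $(\psi_s(x)+\psi_s(y))^2-\ab{x+y}^2=4s^2+2A(x,y)$, whence
\[d_s(x,y)=\sqrt{1+\frac{A(x,y)}{2s^2}}-1,\]
so all three claims translate into claims about $A$. The single algebraic fact driving the proof is the factorization
\[\bigl(\psi_s(x)\psi_s(y)\bigr)^2-(s^2+x\cdot y)^2=s^2\ab{x-y}^2+\bigl(\ab{x}^2\ab{y}^2-(x\cdot y)^2\bigr),\]
verified by direct expansion, both summands on the right being nonnegative (the second by Cauchy--Schwarz).

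For (i), symmetry of $d_s$ is visible from the symmetry of $A$; positivity follows because the factorization yields $\psi_s(x)\psi_s(y)\geq s^2+x\cdot y$ (the case $s^2+x\cdot y<0$ being trivial since the left side is positive), with equality forcing both nonnegative summands on the right to vanish and hence $x=y$. For (ii), with $x$ fixed I would estimate $A(x,y)\geq(\psi_s(x)-\ab{x})\ab{y}-s^2$ using $\psi_s(y)\geq\ab{y}$ and $x\cdot y\leq\ab{x}\ab{y}$; this diverges as $\ab{y}\to\infty$ since $\psi_s(x)>\ab{x}$. For (iii) I would divide the factorization by $\psi_s(x)\psi_s(y)+s^2+x\cdot y$ to obtain
\[A(x,y)=\frac{s^2\ab{x-y}^2+\ab{x}^2\ab{y}^2-(x\cdot y)^2}{\psi_s(x)\psi_s(y)+s^2+x\cdot y}.\]
On $\ab{x},\ab{y}\leq R$ the denominator lies in $[s^2,\,2(s^2+R^2)]$, using $\psi_s(x)\psi_s(y)\geq\ab{x}\ab{y}\geq-x\cdot y$. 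The numerator is $\geq s^2\ab{x-y}^2$, and writing $y=x+h$ gives $\ab{x}^2\ab{y}^2-(x\cdot y)^2=\ab{x}^2\ab{h}^2-(x\cdot h)^2\leq R^2\ab{x-y}^2$. Hence $A\asymp\ab{x-y}^2$ on the ball, and the elementary pinching $u/(1+\sqrt{1+U})\leq\sqrt{1+u}-1\leq u/2$ for $0\leq u\leq U$, applied with $U=R^2/s^2$, transfers the comparison from $A$ to $d_s$. Replacing $\ab{x-y}^2$ by $2R\,\ab{x-y}$ in the upper bound then yields the linear bound $C_2(R)\ab{x-y}$ stated.

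I do not expect any serious obstacle: the whole argument rests on the algebraic factorization displayed above, after which each part is a short exercise with monotonicity and Cauchy--Schwarz. The only piece requiring even a bit of care is the pinching on $\sqrt{1+u}-1$ used to pass from bounds on $A$ to bounds on $d_s$ with constants uniform over the compact set $\ab{x},\ab{y}\leq R$.
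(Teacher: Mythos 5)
The paper leaves the proof of Lemma~\ref{properties-of-ds} to the reader, so there is no paper argument to compare against; your proposal supplies a complete and correct proof. The reduction to $A(x,y)=\psi_s(x)\psi_s(y)-x\cdot y-s^2$ via $d_s=\sqrt{1+A/(2s^2)}-1$, the factorization $(\psi_s(x)\psi_s(y))^2-(s^2+x\cdot y)^2=s^2\ab{x-y}^2+(\ab{x}^2\ab{y}^2-(x\cdot y)^2)$, the lower bound on the denominator $\psi_s(x)\psi_s(y)+s^2+x\cdot y\geq s^2$ via $\psi_s(x)\psi_s(y)\geq\ab{x}\ab{y}\geq -x\cdot y$, the identity $\ab{x}^2\ab{y}^2-(x\cdot y)^2=\ab{x}^2\ab{h}^2-(x\cdot h)^2$ with $h=y-x$, and the pinching $u/(1+\sqrt{1+U})\leq\sqrt{1+u}-1\leq u/2$ on $[0,U]$ all check out, and each of (i)--(iii) follows as you describe.
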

Property (ii) implies that for given $y\in\R^2$, the $d_s$-ball of radius $R>0$ and center $y$, $B_{d_s}
(y,R):=\{x\in\R^2:d_s(x,y)\leq R\}$, is a bounded set. Property (iii) relates the $d_s$-ball to the euclidean ball; namely, it
implies that for $y$ with $\ab{y}\leq R$ and $r>0$
\begin{equation}
 \label{inclusion-balls}
 B(y,cr)\subset B_{d_s}(y,r)\subset B(y,c'\sqrt{r}),
\end{equation}
 for some constants $c,c'$ depending only on $R$ and $r$.
\begin{proof}[Proof of Proposition \ref{spatial-infinity-d2p4-with-ds}]
The first task is to find a sequence $\{y_n\}_{n\in\N}\subset \mathbbm H^2_s$ such that an analog of
\eqref{concentration-along-sequence} is satisfied. It is convenient, for notational purposes only, to identify functions from
$\mathbbm
H^2_s$ to $\R$ with functions from $\R^2$ to $\R$, and points in $\mathbbm H^2_s$ with points in $\R^2$. This is done via the
projection
of $\mathbbm H^2_s$ onto $\R^2\times\{0\}$. 

From Lemmas \ref{convolution-foschi-with-measure} and \ref{convolutions-d2}, for the inequality in convolution form, we have
\begin{align*}
 \norma{f_n\sigma_s*f_n\sigma_s}_{L^2(\R^3)}^2&\leq \int_{\mathcal
P_{2,2}}\Norma{\frac{f_n(x)f_n(y)}{\psi_s(x)^{1/2}\psi_s(y)^{1/2}}}_{(\tau,\xi)}^2 \sigma_s*\sigma_s(\tau,\xi) d\tau d\xi \\
 &=\frac{\pi}{s}\int_{\mathcal
P_{2,2}}\Norma{\frac{f_n(x)f_n(y)}{\psi_s(x)^{1/2}\psi_s(y)^{1/2}}}_{(\tau,\xi)}^2\frac{2s}{\sqrt{\tau^2-\ab{\xi}^2}}d\tau
d\xi\\
 &\leq \frac{\pi}{s}\norma{f_n}_{L^2}^4.
 \end{align*}
 Since $\norma{f_n\sigma_s*f_n\sigma_s}_{L^2(\R^3)}^2\to \pi/s$ as $n\to\infty$, we obtain 
 \begin{equation}
 \label{extremizing-sequence-to-zero-four}
  \int_{\mathcal
P_{2,2}}\Norma{\frac{f_n(x)f_n(y)}{\psi_s(x)^{1/2}\psi_s(y)^{1/2}}}_{(\tau,\xi)}^2\frac{2s}{\sqrt{\tau^2-\ab{\xi}^2}}
\,d\tau d\xi\to 1\,\text{ as }n\to\infty.
 \end{equation}
 As in the proof of Lemma \ref{convolution-foschi-with-measure}, the expression on the left hand side can be written as
 \begin{align*}
  \int_{\mathcal
P_{2,2}}\Norma{&\frac{f_n(x)f_n(y)}{\psi_s(x)^{1/2}\psi_s(y)^{1/2}}}_{(\tau,\xi)}^2\frac{2s}{\sqrt{\tau^2-\ab{\xi}^2}}
\,d\tau d\xi\\
  &=\int_{(\R^2)^2} \frac{f_n^2(x)f_n^2(y)}{\psi_s(x)\psi_s(y)}\int_{\mathcal P_{2,2}}\ddirac{\tau-\psi_s(x)-\psi_s(y)\\
\xi-x-y}\frac{2s}{\sqrt{\tau^2-\ab{\xi}^2}}d\tau d\xi\,dx\,dy\\
  &= \int_{(\R^2)^2} \frac{f_n^2(x)f_n^2(y)}{\psi_s(x)\psi_s(y)}\frac{2s}{((\psi_s(x)+\psi_s(y))^2-\ab{x+y}^2)^{1/2}}dx\,dy\\
  &= \int_{(\R^2)^2} \frac{f_n^2(x)f_n^2(y)}{\psi_s(x)\psi_s(y)}K_s(x,y)dx\,dy.
 \end{align*}
Observe that
\[\int_{(\R^2)^2} \frac{f_n^2(x)f_n^2(y)}{\psi_s(x)\psi_s(y)}dx\,dy=\norma{f_n}_{L^2(\mathbbm H^2_s)}^2\to1 \text{ as
}n\to\infty\]
and
\[K_s(x,y):=\frac{2s}{((\psi_s(x)+\psi_s(y))^2-\ab{x+y}^2)^{1/2}}=\frac{1}{d_s(x,y)+1}\leq 1\]
for all $x,y\in\R^2$.
Equation \eqref{extremizing-sequence-to-zero-four} implies that
\[\int_{(\R^2)^2}\frac{f_n^2(x)f_n^2(y)}{\psi_s(x)\psi_s(y)}K_s(x,y) dx\,dy\to 1\,\text{ as }n\to\infty.\]
Let $h_n(y)=f_n(y)^2/\psi_s(y)$, so that $\lim_{n\to\infty}\int_{\R^2} h_n(y)dy=1$. For $\eps>0$,
\begin{align*}
 \int_{(\R^2)^2} h_n(x)h_n(y)K_s(x,y)dx\,dy&=\int_{\substack{d_s(x,y)\leq \eps}} h_n(x)h_n(y)K_s(x,y)dx\,dy\\
 &\qquad+\int_{\substack{d_s(x,y)>\eps}} h_n(x)h_n(y)K_s(x,y)dx\,dy\\
 &\leq \norma{h_n}_{L^1(\R^2)}^2-\biggl(1-\frac{1}{\eps+1}\biggr)\int_{\substack{d_s(x,y)>\eps}} h_n(x)h_n(y)dx\,dy.
\end{align*}
Since the left hand side tends to $1$ as $n\to \infty$, we conclude that 
\[\lim\limits_{n\to\infty}\int_{\substack{d_s(x,y)\leq \eps}} h_n(x)h_n(y)dx\,dy=1.\]
Using Fubini's Theorem, we can write
\begin{align*}
 \int_{\substack{d_s(x,y)\leq \eps}} h_n(x)h_n(y)dx\,dy&=\int_{\R^2}h_n(y)\int_{\substack{d_s(x,y)\leq \eps}}h_n(x)dxdy\\
 &\leq \norma{h}_{L^1(\R^2)}\sup\limits_{y\in\R^2}\int_{d_s(x,y)\leq \eps}h_n(x)dx.
\end{align*}
Then
\begin{equation}
 \label{concentration-in-the-limit}
 \lim\limits_{n\to\infty}\sup\limits_{y\in \R^2}\int_{B_{d_s}(y,\eps)}h_n(x)dx= 1.
\end{equation}
Equation \eqref{concentration-in-the-limit} implies that there exists $N(\eps)\in\N$ such that for all $n\geq N(\eps)$,
\[\sup\limits_{y\in \R^2}\int_{B_{d_s}(y,\eps)}h_n(y)dy\geq 1-\frac{\eps}{2},\]
and hence there exists $\{y_{n}^\eps\}_{n\geq N(\eps)}\subset \R^2$ such that
\[\int_{B_{d_s}(y_n^\eps,\eps)}h_n(y)dy\geq 1-\eps.\]
Applying \eqref{concentration-in-the-limit} in this way, we obtain, for each $\eps>0$, a number $N(\eps)$ and a sequence
$\{y_n^\eps\}_{n\geq N(\eps)}$.

The construction of the sequence $\{y_n\}_{n\in\N}$ is obtained by a diagonal process. We take a strictly decreasing
sequence $\{\eps_k\}_{k\in\N}$ such that $\eps_k\to0$ as $k\to\infty$. This gives sequences $\{N(k)\}_{k\in\N}$ and
$\{y_n^k\}_{n\geq
N(k),k\geq 0}$. We can take the sequence $\{N(k)\}_{k\in\N}$ strictly increasing. For each $n\geq N(1)$, we let
$l(n)=\sup\{k\in\N:N(k)\leq n\}$. Next, define $\{y_n\}_{n\in\N}$ by
\[y_n=\begin{cases} 
       y_n^{l(n)}\text{ if }n\geq N(1),\\
       y_0^{\phantom{l(n)}}\text{ if }n< N(1),
      \end{cases}
\]
where $y_0\in\R^2$ is arbitrary, but fixed.

Now let $\eps,r>0$ be given. Take $k$ such that $\eps_{k}<\min\{\eps,r\}$. For $n\geq N(k)$, $l(n)\geq k$, so $\eps_{l(n)}\leq
\eps_k<\min\{\eps,r\}$
and $\int_{B_{d_s}(y_n^{l(n)},\eps_{l(n)})}h_n(y)dy\geq 1-\eps_{l(n)}$; hence
\[\int_{B_{d_s}(y_n,r)}h_n(y)dy\geq\int_{B_{d_s}(y_n^{l(n)},\eps_{l(n)})}h_n(y)dy\geq 1-\eps_{l(n)}\geq
1-\eps.\]
Therefore, for every $\eps,r>0$, there exists $N\in\N$ such that
\begin{equation}
 \label{conclusion-with-projection}
 \int_{B_{d_s}(y_n,r)}\ab{f_n(y)}^2\frac{dy}{\sqrt{s^2+\ab{y}^2}}\geq1-\eps.
\end{equation}
 for all $n\geq N$.
 
To finish the proof we use the Lorentz invariance. This is better done without identifying $\mathbbm H^2_s$ with
$\R^2$. So now we lift everything to $\mathbbm H^2_s$. Let
$D_s:\{(\xi,\tau)\in\R^2\times\R:\tau>\ab{\xi}\}^2\to\R$
be defined by
\[D_s((\xi_1,\tau_1),(\xi_2,\tau_2))=\frac{1}{2s}((\tau_1+\tau_2)^2-\ab{\xi_1+\xi_2}^2)^{1/2}-1.\]
Observe that for every $L\in\mathcal L^+$, $D_s(L(\xi_1,\tau_1),L(\xi_2,\tau_2))=D_s((\xi_1,\tau_1),(\xi_2,\tau_2))$.

Let $z_n=(y_n,\psi_s(y_n))\in\mathbbm H^2_s$. We can write \eqref{conclusion-with-projection} as
\[\int_{D_s(z,z_n)\leq r}\ab{f_n(z)}^2 d\sigma_s(z)\geq 1-\eps.\]
By the Lorentz invariance of $D_s$ and $\sigma_s$, we have that for $L_n\in\mathcal L^+$ for which $L_n^{-1}(z_n)=(0,s)=P$ and for
every $\eps,r>0$, there exists $N\in\N$ such that
\begin{equation}
 \label{last-step-concentration}
 \int_{D_s(z,P)\leq r}\ab{L_n^*f_n(z)}^2 d\sigma_s(z)\geq 1-\eps\,\text{ for all  }n\geq N.
\end{equation}
 Property (iii) in Lemma \ref{properties-of-ds} and \eqref{last-step-concentration} imply that for
every $\eps,r>0$, there exists $N\in\N$ such that
\[\int_{\ab{z-P}\leq r}\ab{L_n^*f(z)}^2 d\sigma_s(z)\geq 1-\eps\]
for all $n\geq N$.
\end{proof}

\section{The two-sheeted hyperboloid}\label{two-sheeted}
In this section, we consider the two-sheeted hyperboloid 
\[\bar{\mathbbm H}^d_s=\{(y,y')\in\R^d\times \R:y'^2=s^2+\ab{y}^2\}\]
with measure
\[ \bar\sigma_s(y,y')=\delta(y'-\sqrt{s^2+\ab{y}^2})\frac{dydy'}{\sqrt{s^2+\ab{y}^2}}
 +\delta(y'+\sqrt{s^2+\ab{y}^2})\frac{dydy'}{\sqrt{s^2+\ab{y}^2}}
\]
and the adjoint Fourier restriction operator defined by $\bar T_s f=\widehat{f\bar\sigma_s}$, for $f\in \mathcal S(\R^{d+1})$.

$\bar{\mathbbm H}^d_s$ is the union of the two sheets
\[\mathbbm H^{d,\pm}_s=\{(y,y')\in\R^d\times \R:y'=\pm\sqrt{s^2+\ab{y}^2}\}.\]
What in this section we are calling $\mathbbm H^{d,+}_s$ is what before we denoted by $\mathbbm H^d_s$. In the previous section,
we proved that for $\mathbbm H^{d,+}_s$ (and thus also for $\mathbbm H^{d,-}_s$),
extremizers do not exist for the cases $(d,p)=(2,4),(2,6)$ and $(3,4)$. Here, we show that extremizers for $\bar{\mathbbm H}^d_s$
do not exist either and compute the best constants. 

The adjoint Fourier restriction operator on $\mathbbm H^{d,+}_s$ is denoted by $T_s$, and the adjoint Fourier restriction
operator on $\mathbbm H^{d,-}_s$ by $T_s^-$. For $s=1$, we drop the subscript $s$. For $A,B\subseteq \R^d$, we set
$A+B=\{a+b:a\in A,\, b\in B\}$ and $-A=\{-a:a\in A\}$.

\begin{lemma}
\label{double-sum-hyperboloid}
 For $d\geq 1$,
 \begin{align}
 \label{plus-plus}
  \mathbbm H_s^{d,+} +\mathbbm H_s^{d,+} &\subseteq\{(\xi,\tau)\in\R^d\times \R:\tau\geq \sqrt{(2s)^2+\ab{\xi}^2}\},\\
  \label{plus-minus}
  \mathbbm H_s^{d,+} +\mathbbm H_s^{d,-} &\subseteq\{(\xi,\tau)\in\R^d\times \R:\ab{\tau}\leq \ab{\xi}\},\\
  \label{minus-minus}
  \mathbbm H_s^{d,-} +\mathbbm H_s^{d,-} &\subseteq\{(\xi,\tau)\in\R^d\times \R:\tau\leq -\sqrt{(2s)^2+\ab{\xi}^2}\}.
 \end{align}
\end{lemma}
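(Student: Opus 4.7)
The plan is to reduce each of the three containments to a single algebraic inequality between $\psi_s(y_1)\psi_s(y_2)$ and $s^2\pm y_1\cdot y_2$. An arbitrary element of $\mathbbm H_s^{d,+}+\mathbbm H_s^{d,+}$ can be written as $(\xi,\tau)=(y_1+y_2,\psi_s(y_1)+\psi_s(y_2))$ with $y_1,y_2\in\R^d$; for $\mathbbm H_s^{d,-}+\mathbbm H_s^{d,-}$ one flips the sign of $\tau$; and for $\mathbbm H_s^{d,+}+\mathbbm H_s^{d,-}$ one has $\tau=\psi_s(y_1)-\psi_s(y_2)$. In each case I would compute
\[
\tau^2-\ab{\xi}^2 = 2s^2 + 2\bigl(\varepsilon\,\psi_s(y_1)\psi_s(y_2) - y_1\cdot y_2\bigr),
\]
with $\varepsilon=+1$ in the same-sheet cases and $\varepsilon=-1$ in the plus-minus case, using $\psi_s(y_j)^2=s^2+\ab{y_j}^2$.

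Once this identity is in hand, \eqref{plus-plus} is equivalent to the assertion $\tau\geq 0$ and $\tau^2-\ab{\xi}^2\geq 4s^2$, which reduces to the inequality
\begin{equation}
\psi_s(y_1)\psi_s(y_2)\geq s^2+y_1\cdot y_2.\tag{$\star$}
\end{equation}
The bound \eqref{minus-minus} is immediate from \eqref{plus-plus} by the symmetry $(y,y')\mapsto (y,-y')$ which swaps $\mathbbm H_s^{d,+}$ and $\mathbbm H_s^{d,-}$ and negates $\tau$. For \eqref{plus-minus}, the requirement $\tau^2\leq 4s^2+\ab{\xi}^2$ reduces to the inequality $\psi_s(y_1)\psi_s(y_2)\geq -(s^2+y_1\cdot y_2)$; when the right-hand side is $\leq 0$ this is trivial (the left-hand side is positive), and when the right-hand side is positive it follows by squaring from the same squared form of $(\star)$.

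Thus the entire lemma is reduced to proving $(\star)$. Since both sides of $(\star)$ may be compared through their squares (using that $\psi_s(y_1)\psi_s(y_2)\geq s^2>0$, so if $s^2+y_1\cdot y_2\leq 0$ there is nothing to check), it suffices to verify
\[
(s^2+\ab{y_1}^2)(s^2+\ab{y_2}^2)\geq (s^2+y_1\cdot y_2)^2.
\]
Expanding both sides and cancelling $s^4$ and the mixed $s^2$-terms leaves
\[
s^2\ab{y_1-y_2}^2 + \bigl(\ab{y_1}^2\ab{y_2}^2-(y_1\cdot y_2)^2\bigr)\geq 0,
\]
which is obvious term by term, the second summand being the Cauchy--Schwarz defect. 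The only mild obstacle is sign-tracking in the plus-minus case, where one must distinguish according to the sign of $s^2+y_1\cdot y_2$ before invoking the squared version of $(\star)$; otherwise the argument is entirely elementary.
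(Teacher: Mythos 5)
Your proposal is correct and follows essentially the same route as the paper: in each case one identifies $\tau^2-\ab{\xi}^2$, reduces the containment to $\psi_s(y_1)\psi_s(y_2)\geq \pm(s^2+y_1\cdot y_2)$, and verifies this by squaring (with the sign caveat you correctly flag). The only cosmetic difference is that you work directly with $y_1\cdot y_2$ and exhibit the squared defect as $s^2\ab{y_1-y_2}^2$ plus the Cauchy--Schwarz gap, whereas the paper first replaces $y_1\cdot y_2$ by $\ab{y_1}\ab{y_2}\cos\te$ and reduces to the aligned case $a,b\geq 0$; this is the same inequality in a slightly different dress.
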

\begin{proof}
To establish \eqref{plus-plus}, let
$\xi=x+y$ and $\tau=\psi_s(x)+\psi_s(y)$. Thus
 \[\tau^2=(\psi_s(x)+\psi_s(y))^2=2s^2+\ab{x}^2+\ab{y}^2+2(s^2+\ab{x}^2)^{1/2}(s^2+\ab{y}^2)^{1/2},\]
 while $\ab{\xi}^2=\ab{x+y}^2=\ab{x}^2+\ab{y}^2+2x\cdot y$. Then \eqref{plus-plus} is equivalent to the inequality
 \begin{equation}
 \label{plus-plus-equivalent}
  (s^2+\ab{x}^2)^{1/2}(s^2+\ab{y}^2)^{1/2}\geq s^2+x\cdot y \,\text{ for all }x,y\in\R^d.
 \end{equation}
 Using $x\cdot y=\ab{x}\ab{y}\cos\te$, where $\te$ is the angle between $x$ and $y$, we see that \eqref{plus-plus-equivalent} is
equivalent
to
 \begin{equation*}
 (s^2+a^2)^{1/2}(s^2+b^2)^{1/2}\geq s^2+ab
 \end{equation*}
for all $a,b,s\geq 0$, which is easily shown to hold by squaring both sides.

We proceed in a similar way for \eqref{plus-minus}. Let $\xi=x+y$ and $\tau=\psi_s(x)-\psi_s(y)$. Then
$\tau^2=2s^2+\ab{x}^2+\ab{y}^2-2(s^2+\ab{x}^2)^{1/2}(s^2+\ab{y}^2)^{1/2}$. As before, \eqref{plus-minus} is equivalent
to the inequality
\[-(s^2+\ab{x}^2)^{1/2}(s^2+\ab{y}^2)^{1/2}\leq -s^2+x\cdot y\]
for all $x,y\in\R^d$, which in turn is equivalent to
\[(s^2+a^2)^{1/2}(s^2+b^2)^{1/2}\geq s^2+ab,\]
 which holds for all real numbers $a,b,s\geq 0$.
 
As for \eqref{minus-minus}, it follows from \eqref{plus-plus} observing that $\mathbbm H_s^{d,-}=-\mathbbm H_s^{d,+}$.
\end{proof}

\begin{lemma}
\label{triple-sum-hyperboloid}
Let $d\geq 1$,
 \begin{align}
 \label{plus-plus-plus}
 \mathbbm H_s^{d,+}+\mathbbm H_s^{d,+}+\mathbbm H_s^{d,+} &\subseteq\{(\xi,\tau)\in\R^d\times \R:\tau\geq \sqrt{(3s)^2+\ab{\xi}^2}\},\\
 \label{minus-minus-minus}
 \mathbbm H_s^{d,-}+\mathbbm H_s^{d,-}+\mathbbm H_s^{d,-}&\subseteq\{(\xi,\tau)\in\R^d\times \R:\tau\leq -\sqrt{(3s)^2+\ab{\xi}^2}\},\\
 \label{plus-plus-minus}
 \mathbbm H_s^{d,+}+\mathbbm H_s^{d,+}+\mathbbm H_s^{d,-}&\subseteq\{(\xi,\tau)\in\R^d\times \R:\tau\geq
-\sqrt{s^2+\ab{\xi}^2}\},\\
 \label{plus-minus-minus}
 \mathbbm H_s^{d,+}+\mathbbm H_s^{d,-}+\mathbbm H_s^{d,-}&\subseteq\{(\xi,\tau)\in\R^d\times \R:\tau\leq
\sqrt{s^2+\ab{\xi}^2}\}.
\end{align}
\end{lemma}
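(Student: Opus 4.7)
The plan is to reduce each inclusion to the sum of a single sheet point and a two-term sum already controlled by Lemma \ref{double-sum-hyperboloid}, and then to invoke the Lorentz invariance from Section \ref{section-on-lorentz} to bring the two-term sum into a normal form $(0,\pm m)$ with $m\geq 2s$. After this normalization each inclusion reduces to a short one-variable inequality, so the structure of the argument is the same in all four cases.

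For \eqref{plus-plus-plus} I would write $(\xi,\tau)=(\eta,\mu)+(x_3,\psi_s(x_3))$ with $(\eta,\mu)\in\mathbbm H_s^{d,+}+\mathbbm H_s^{d,+}$. By \eqref{plus-plus}, $\mu\geq 2s$ and $m:=\sqrt{\mu^2-\ab{\eta}^2}\geq 2s$, and by the discussion in Section \ref{section-on-lorentz} there exists $L\in\mathcal L^+$ with $L(\eta,\mu)=(0,m)$. Since $\mathcal L^+$ preserves $\mathbbm H_s^{d,+}$, the image of $(x_3,\psi_s(x_3))$ has the form $(x_3',\psi_s(x_3'))$, and the sum becomes $(x_3',m+\psi_s(x_3'))$. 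The target region is defined by the Lorentz-invariant conditions $\tau>0$ and $\tau^2-\ab{\xi}^2\geq 9s^2$; the former is immediate, while the latter expands to $m^2+2m\psi_s(x_3')+s^2\geq 9s^2$, which holds because $m\geq 2s$ and $\psi_s(x_3')\geq s$.

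For \eqref{plus-plus-minus} the same normalization yields $(y,m-\psi_s(y))$ with $m\geq 2s$, and I need to verify $m-\psi_s(y)\geq -\sqrt{(3s)^2+\ab{y}^2}$; this is trivial when $\psi_s(y)\leq m$, and when $\psi_s(y)>m$, squaring the equivalent inequality $\psi_s(y)-m\leq \sqrt{(3s)^2+\ab{y}^2}$ reduces to $m(m-2\psi_s(y))\leq 8s^2$, whose left side is negative in this case. For \eqref{plus-minus-minus} I would Lorentz-normalize the pair of $\mathbbm H_s^{d,-}$ summands to $(0,-m)$ with $m\geq 2s$, so the triple sum reads $(y,\psi_s(y)-m)$ and $\psi_s(y)-m\leq \psi_s(y)\leq \sqrt{(3s)^2+\ab{y}^2}$ gives the desired bound immediately. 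Finally \eqref{minus-minus-minus} follows from \eqref{plus-plus-plus} via coordinate negation and the identity $\mathbbm H_s^{d,-}=-\mathbbm H_s^{d,+}$.

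The main obstacle to watch is that the sets on the right-hand sides of \eqref{plus-plus-minus} and \eqref{plus-minus-minus} are not cones but their complements, so I must check they are still $\mathcal L^+$-invariant; this is true because $\mathcal L^+$ preserves both $\tau^2-\ab{\xi}^2$ and the sign of $\tau$ on strictly timelike vectors. A second minor point is the case $\mu^2-\ab{\eta}^2=(2s)^2$ on the boundary of the intermediate cone, but since $\mu\geq 2s>0$ and $\mu^2>\ab{\eta}^2$ hold there as well, the construction of $L\in\mathcal L^+$ recalled in Section \ref{section-on-lorentz} applies without any limiting argument.
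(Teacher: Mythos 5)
Your proof is correct, and it takes a genuinely different route from the paper. The paper proves each inclusion by a direct elementary computation: it groups the first two summands into a point $(x,\mu)$ with $\mu\geq \psi_{2s}(x)=\sqrt{(2s)^2+\ab{x}^2}$, adds the third summand $(y,\pm\psi_s(y))$, squares, bounds $x\cdot y$ by $\pm\ab{x}\ab{y}$, and reduces each case to a scalar inequality of the form $(4s^2+a^2)^{1/2}(s^2+b^2)^{1/2}\geq \pm(ab\mp 2s^2)$, all for $a,b,s\geq 0$. Your approach instead applies the Lorentz boost to bring the intermediate double-sum point to canonical form $(0,\pm m)$ with $m\geq 2s$; after that the remaining inequalities are one-line verifications because everything collapses to $m^2+2m\psi_s(y)+s^2\geq 9s^2$ or the trivial $\psi_s(y)-m\leq\psi_s(y)\leq\sqrt{(3s)^2+\ab{y}^2}$. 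The price of the normalization route is that you must confirm the target sets are $\mathcal L^+$-invariant, which for the two half-space inclusions \eqref{plus-plus-minus} and \eqref{plus-minus-minus} requires the orthochronous property of $\mathcal L^+$ (preservation of time orientation on timelike vectors), and you correctly flag and address this subtlety. The paper's proof is more self-contained and avoids invoking the orthochronous structure, at the cost of slightly more algebra; yours makes the geometric content transparent and treats all four cases uniformly once the normalization is in place.
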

\begin{proof}
 We know from Lemma \ref{double-sum-hyperboloid} that
 \[\mathbbm H_s^{d,+} +\mathbbm H_s^{d,+} \subseteq\{(\xi,\tau):\tau\geq \sqrt{(2s)^2+\ab{\xi}^2}\}.\]
 We start with \eqref{plus-plus-plus}. Setting $\xi=x+y$ and $\tau\geq \psi_{2s}(x)+\psi_s(y)>0$ and squaring the latter
inequality for $\tau$ gives
 \[\tau^2\geq 5s^2+\ab{x}^2+\ab{y}^2+2(4s^2+\ab{x}^2)^{1/2}(s^2+\ab{y}^2)^{1/2}.\]
 Then \eqref{plus-plus-plus} follows from the inequality
 \[(4s^2+\ab{x}^2)^{1/2}(s^2+\ab{y}^2)^{1/2}\geq 2s^2+x\cdot y\text{ for all }x,y\in\R^d,\]
which is equivalent to
 \[(4s^2+a^2)^{1/2}(s^2+b^2)^{1/2}\geq 2s^2+ab,\]
  which is easy to verify for all $a,b,s\geq 0$.
  
 We now establish \eqref{plus-plus-minus}. Let $\xi=x+y$ and $\tau\geq \psi_{2s}(x)-\psi_s(y)$. If $\tau\geq 0$, we are done. So,
we suppose that $0\geq\tau\geq \psi_{2s}(x)-\psi_s(y)$. Then
 \[\tau^2\leq 5s^2+\ab{x}^2+\ab{y}^2-2(4s^2+\ab{x}^2)^{1/2}(s^2+\ab{y}^2)^{1/2},\]
 and \eqref{plus-plus-minus} follows from the inequality
 \[-(4s^2+\ab{x}^2)^{1/2}(s^2+\ab{y}^2)^{1/2}\leq -2s^2+x\cdot y\text{ for all }x,y\in\R^d\]
which is equivalent to
 \[(4s^2+a^2)^{1/2}(s^2+b^2)^{1/2}\geq 2s^2+ab,\]
which holds for all $a,b,s\geq 0$.
 
 Both \eqref{minus-minus-minus} and \eqref{plus-minus-minus} can be proved similarly or obtained from \eqref{plus-plus-plus} and
\eqref{plus-plus-minus} using that $\mathbbm H_s^{d,-}=-\mathbbm H_s^{d,+}$.
\end{proof}

For a function $f\in L^2(\bar{\mathbbm H}^d_s)$, we write $f=f_++f_-$, where $f_+$ is supported on $\mathbbm H^{d,+}_s$ and
$f_-$ is supported on $\mathbbm H^{d,-}_s$. Then
\[\norma{f}_{L^2(\bar{\mathbbm H}^d_s)}^2=\norma{f_+}_{L^2({\mathbbm H}^{d,+}_s)}^2+\norma{f_-}_{L^2({\mathbbm H}^{d,-}_s)}^2.\]

\begin{prop}
\label{two-sheeted-hyperboloid-p4}
 Let $d\in\{2,3\}$ and $f\in L^2(\bar{\mathbbm H}^d_s)$, $f\neq 0$. Then
 \begin{equation}
 \label{upper-bound-two-sheeted-p4}
  \norma{\bar T_sf}_{L^4(\R^{d+1})}^4\norma{f}_{L^2(\bar{\mathbbm H}^d_s)}^{-4}\leq \frac{3}{2}\mathbf{H}_{d,4,s}^4.
 \end{equation}
 If equality holds in \eqref{upper-bound-two-sheeted-p4}, 
 \[\norma{T_sf_+}_{L^4(\R^{d+1})}=\mathbf{H}_{d,4,s}\norma{f_+}_{L^2({\mathbbm H}^{d,+}_s)}\,\text{ and
}\,\norma{T_s^{-}f_-}_{L^4(\R^{d+1})}=\mathbf{H}_{d,4,s}\norma{f_-}_{L^2({\mathbbm H}^{d,-}_s)}.\]
 
 Moreover, if $\{f_n\}_{n\in\N}$ is an extremizing sequence for $\bar T_s$, then $\{f_{n,+}/\norma{f_{n,+}}_2\}_{n\in\N}$ and
$\{f_{n,-}/\norma{f_{n,-}}_2\}_{n\in\N}$ are extremizing sequences for $T_s$ in $\mathbbm H^{d,+}_s$ and $T_s^-$  in $\mathbbm
H^{d,-}_s$, respectively.
\end{prop}

\begin{proof}
 The proof of \eqref{upper-bound-two-sheeted-p4} is analogous to the argument in \cite{Fo}*{pp. 754-755}. We
restrict attention to the case $s=1$, but the other cases follow in
the same way or by the use of scaling.

Observe that 
\begin{align*}
 \norma{\bar Tf}_{L^4}^4&=\norma{Tf_+ + T^{-}f_-}_{L^4}^4=\norma{(Tf_+ + T^{-}f_-)^2}_{L^2}^2\\
 &=\norma{(Tf_+)^2 + (T^{-}f_-)^2+2(Tf_+)(T^-f_-)}_{L^2}^2.
\end{align*}
 Using the fact that product transforms into convolution under the Fourier transform, we see that the Fourier transforms of
$(Tf_+)^2,\,
(T^-f_-)^2$ and $(Tf_+)(T^-f_-)$ are supported on $\mathbbm H^{d,+}+\mathbbm H^{d,+},\,\mathbbm H^{d,-}+\mathbbm H^{d,-}$, and
$\mathbbm H^{d,+}+\mathbbm H^{d,-}$, respectively. By Lemma
\ref{double-sum-hyperboloid}, the pairwise intersections of these three sets have measure zero. Therefore,
 \begin{align}
 \norma{\bar Tf}_{L^4}^4&=\norma{Tf_+}_{L^4}^4 + \norma{T^{-}f_-}_{L^4}^4+4\norma{(Tf_+)(T^-f_-)}_{L^2}^2\\
 \label{first-inequality-double-hyperboloid}
 &\leq \textbf{H}_{d,4}^4(\norma{f_+}_{L^2}^4 + \norma{f_-}_{L^2}^4+4\norma{f_+}_{L^2}^2\norma{f_-}_{L^2}^2)\\
 \label{second-inequality-double-hyperboloid}
 &\leq \frac{3}{2}\textbf{H}_{d,4}^4(\norma{f_+}_{L^2}^2+\norma{f_-}_{L^2}^2)^2\\
 &=\frac{3}{2}\textbf{H}_{d,4}^4\norma{f}_{L^2}^4,
\end{align}
where we have used the sharp inequality (as in \cite{Fo}) 
\begin{equation}
 \label{inequality-reals-one-hyperboloid}
 X^2+Y^2+4XY\leq \frac{3}{2}(X+Y)^2,\quad X,Y\geq 0,
\end{equation}
where equality holds if and only if $X=Y$. Thus,
\begin{equation}
 \label{upper-bound-two-sheeted-p4-again}
 \norma{\bar Tf}_{L^4(\R^{d+1})}^4\norma{f}_{L^2(\bar{\mathbbm H}^d)}^{-4}\leq \frac{3}{2}\mathbf{H}_{d,4}^4.
\end{equation}
For $f\neq 0$, equality holds in \eqref{upper-bound-two-sheeted-p4-again} if and only if it holds in
\eqref{first-inequality-double-hyperboloid} and \eqref{second-inequality-double-hyperboloid}. Equality holds in
\eqref{first-inequality-double-hyperboloid} if and only if $\norma{Tf_+}_{L^4}=\mathbf{H}_{d,4}\norma{f_+}_{L^2({\mathbbm
H}^{d,+})}$, $\norma{T^-f_-}_{L^4}=\mathbf{H}_{d,4}\norma{f_-}_{L^2({\mathbbm H}^{d,-})}$ and $\ab{Tf_+}=\la\ab{T^-f_-}$ a.e. in
$\R^{d+1}$ for some $\la\geq 0$, and in \eqref{second-inequality-double-hyperboloid} if and only if
$\norma{f_+}_2=\norma{f_-}_2$. Note that equality in \eqref{second-inequality-double-hyperboloid} implies that $\la=1$.

Let $\{f_n\}_{n\in\N}$ be an extremizing sequence for $\bar T$, so that
$\lim_{n\to\infty}\norma{\bar Tf_n}_{L^4(\R^d)}=\bar{\mathbf{H}}_{d,4}$ and
$\norma{f_n}_2\leq 1$. For the decomposition $f_n=f_{n,+}+f_{n,-}$, we see that
\[\lim\limits_{n\to\infty}(\norma{f_{n,+}}_{L^2}^4 +
\norma{f_{n,-}}_{L^2}^4+4\norma{f_{n,+}}_{L^2}^2\norma{f_{n,-}}_{L^2}^2)=\frac{3}{2}.\]
This implies that if $\lim_{n\to\infty}\norma{f_{n,+}}_{L^2}$ and $\lim_{n\to\infty}\norma{f_{n,-}}_{L^2}$ exist, then they must
be equal, and thus equal to $1/\sqrt{2}$. Therefore, any subsequence has a convergent subsequence with limit $1/\sqrt{2}$. This
implies the existence of both limits and
\[\lim\limits_{n\to\infty}\norma{f_{n,+}}_{L^2}=\lim\limits_{n\to\infty}\norma{f_{n,-}}_{L^2}=\frac{1}{\sqrt{2}}.\]

If we write 
\[\norma{Tf_{n,+}}_{L^4}=a_n\mathbf{H}_{d,4}\norma{f_{n,+}}_{L^2({\mathbbm H}^{d,+})}\text{ and }
\norma{T^-f_{n,-}}_{L^4}=b_n\mathbf{H}_{d,4}\norma{f_{n,-}}_{L^2({\mathbbm H}^{d,-})},\]
then, as before,
$\lim_{n\to\infty}a_n\norma{f_{n,+}}_2=1/\sqrt{2}$, and so $\lim_{n\to\infty}a_n=1$; similarly,\break
$\lim_{n\to\infty}b_n=1$. Hence, $\{f_{n,+}/\norma{f_{n,+}}_2\}_{n\in\N}$ and $\{f_{n,-}/\norma{f_{n,-}}_2\}_{n\in\N}$ are
extremizing sequences for $T$ and $T^-$ in $\mathbbm H^{d,+}$ and $\mathbbm
H^{d,-}$, respectively.
 \end{proof}
\begin{cor}
 \label{nonexistence-two-sheeted}
 For $d\in\{2,3\}$, $p=4$ and $s>0$, $\bar{\mathbf{H}}_{d,4,s}=(3/2)^{1/4}\mathbf{H}_{d,4,s}$. Moreover, extremizers for the
adjoint
Fourier restriction
inequality for $\bar{\mathbbm H}^d_s$ do not exist.
\end{cor}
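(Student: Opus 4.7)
Proposition \ref{two-sheeted-hyperboloid-p4} already supplies one half of the corollary: the upper bound $\bar{\mathbf{H}}_{d,4,s}\leq(3/2)^{1/4}\mathbf{H}_{d,4,s}$ and the equality-case analysis needed for the nonexistence step. So only two things remain to do: establish a matching lower bound, and rule out extremizers.

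The nonexistence claim will be essentially immediate from the proposition. If $f\neq 0$ were an extremizer for $\bar T_s$, the sharp case $X=Y$ of the inequality $X^2+Y^2+4XY\leq\tfrac32(X+Y)^2$ invoked in the proof of Proposition \ref{two-sheeted-hyperboloid-p4} would force $\|f_+\|_2=\|f_-\|_2$, so both sheets must carry a nonzero piece; the same proposition would then hand us the stronger conclusion that $f_+$ (and $f_-$) individually extremizes $T_s$ (respectively $T_s^-$) on its sheet, contradicting the first part of Theorem \ref{main-theorem} already proved earlier in this section.

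For the lower bound, the plan is to exhibit an explicit near-extremizer. Starting from the real, radial extremizing sequences $g_a(y)=e^{-a\psi_s(y)}$ of Lemma \ref{extremizing-sequence-d2} ($d=2$, $a\to\infty$) and Lemma \ref{linear-function-four} ($d=3$, $a\to 0^+$), I would normalize so that $\|g_a\|_{L^2(\sigma_s)}=1$ and define $f_a$ on $\bar{\mathbbm H}^d_s$ by placing $g_a/\sqrt{2}$ on each sheet; then $\|f_a\|_{L^2(\bar\sigma_s)}=1$ and, by unwinding definitions, $T_sf_{a,+}=\tfrac1{\sqrt 2}T_sg_a$ while $T_s^-f_{a,-}(x,t)=\tfrac1{\sqrt 2}T_sg_a(x,-t)$. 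Inserting these into the spectral-support identity used in the proof of Proposition \ref{two-sheeted-hyperboloid-p4},
\[\|\bar T_sf_a\|_4^4=\|T_sf_{a,+}\|_4^4+\|T_s^-f_{a,-}\|_4^4+4\|(T_sf_{a,+})(T_s^-f_{a,-})\|_2^2,\]
should collapse the right-hand side to $\tfrac32\|T_sg_a\|_4^4$, which as $a$ tends to the relevant limit will yield $\bar{\mathbf{H}}_{d,4,s}\geq(3/2)^{1/4}\mathbf{H}_{d,4,s}$.

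The key obstacle---making the Cauchy--Schwarz step on the cross term asymptotically sharp---reduces to the pointwise identity $|T_sg_a(x,-t)|=|T_sg_a(x,t)|$. This combines two symmetries of $g_a$: reality gives $\overline{T_sg_a(x,t)}=T_sg_a(-x,-t)$, and radiality gives $T_sg_a(-x,t)=T_sg_a(x,t)$, so composing the two yields $T_sg_a(x,-t)=\overline{T_sg_a(x,t)}$. Both symmetries are manifest for $g_a(y)=e^{-a\psi_s(y)}$, so $|T_s^-f_{a,-}|=|T_sf_{a,+}|$ a.e.; each term in the decomposition then equals $\tfrac14\|T_sg_a\|_4^4$ up to the coefficient $4$ on the cross term, summing to $\tfrac32\|T_sg_a\|_4^4$ as needed.
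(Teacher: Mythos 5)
Your proposal is correct and takes essentially the same route as the paper: build a near-extremizer on $\bar{\mathbbm H}^d_s$ by reflecting a single-sheet extremizing sequence onto both sheets, show the decomposition from the proof of Proposition \ref{two-sheeted-hyperboloid-p4} becomes asymptotically sharp, and derive nonexistence from the equality case of that proposition together with the already-proven nonexistence on one sheet. In fact you are somewhat more careful than the paper, which simply sets $f_{n,-}=\overline{f_{n,+}}$ and asserts sharpness in the limit; that assertion actually requires $\ab{T_sf_{n,+}(x,t)}=\ab{T_sf_{n,+}(-x,t)}$, i.e.\ (for real data) evenness, and you supply exactly the missing reality-plus-radiality argument showing $T_s g_a(x,-t)=\overline{T_s g_a(x,t)}$ so the cross term equals $\norma{T_sg_a}_4^4$.
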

\begin{proof}
 The only part missing is the lower bound for the value of the best constant. For this, take
$\{f_{n,+}\}_{n\in\N}$ to be an extremizing sequence for $T_s$, then, identifying a function on $\mathbbm H^{d,\pm}_s$ with a
function from $\R^d$ to $\R$, set $f_{n,-}(y)=\overline{f_{n,+}}(-y)$, (the complex conjugate of $f_{n,+}$ evaluated at $-y$), $y\in\R^d$.
Then $\{f_n\}_{n\in\N}=\{(f_{n,+}+f_{n,-})/\sqrt{2}\}_{n\in\N}$ is an extremizing sequence for $\bar T_s$ in $\bar{\mathbbm
H}^d_s$, since
inequalities
\eqref{first-inequality-double-hyperboloid} and \eqref{second-inequality-double-hyperboloid} become equalities in
the limit $n\to\infty$.
\end{proof}

\begin{prop}
\label{two-sheeted-hyperboloid-p6}
 Let $d\in\{1,2\}$, $s>0$ and $f\in L^2(\bar{\mathbbm H}^d_s)$, $f\neq 0$. Then
 \begin{equation}
 \label{upper-bound-two-sheeted-p6}
  \norma{\bar T_sf}_{L^6(\R^{d+1})}^6\norma{f}_{L^2(\bar{\mathbbm H}^d_s)}^{-6}< \frac{25}{4}\mathbf{H}_{d,6,s}^6.
 \end{equation}
%
%
In particular,
\[ \bar{\mathbf{H}}_{d,6,s}\leq \biggl(\frac{5}{2}\biggr)^{1/3}\mathbf{H}_{d,6,s}. \]
When $d=2$ we have the refinement
\[ \bar{\mathbf{H}}_{2,6,s}\leq \biggl(\frac{5}{8}(4+3\sqrt{2})\biggr)^{1/6}\mathbf{H}_{2,6,s}. \]
\end{prop}
\begin{proof}
The proof follows the same lines as \cite{Fo}*{pp. 758-760}, and Proposition
\ref{two-sheeted-hyperboloid-p4} using Lemma
\ref{triple-sum-hyperboloid}. Since we want to highlight that \eqref{upper-bound-two-sheeted-p6} is a strict inequality and that a refinement is possible we provide the details. Let us take $s=1$ as other values of $s$ follow by scaling. We start by writing $\bar{T}f=Tf_++T^{-}f_-$, so that
\begin{align*}
\norma{\bar{T}f}_{L^6}^6&=\norma{Tf_++T^{-}f_-}_{L^6}^6=\norma{(Tf_++T^{-}f_-)^3}_{L^2}^2\\
&=\norma{(Tf_+)^3+3(Tf_+)^2(T^-f_-)+3(Tf_+)(T^-f_-)^2+(T^-f_-)^3}_{L^2}^2.
\end{align*}
The Fourier transform of the functions $(Tf_+)^3,\,(Tf_+)^2(T^-f_-),\,(Tf_+)(T^-f_-)^2$ and $(T^-f_-)^3$ are supported on $\mathbbm H^{d,+}+\mathbbm H^{d,+}+\mathbbm H^{d,+}$, $\mathbbm H^{d,+}+\mathbbm H^{d,+}+\mathbbm H^{d,-}$, $\mathbbm H^{d,+}+\mathbbm H^{d,-}+\mathbbm H^{d,-}$ and $\mathbbm H^{d,-}+\mathbbm H^{d,-}+\mathbbm H^{d,-}$, respectively. Therefore, using Lemma \ref{triple-sum-hyperboloid} we obtain
\begin{equation}\label{l6-norm-with-crossed-terms}
\begin{split}
\norma{\bar{T}f}_{L^6}^6&=\norma{Tf_+}_{L^6}^6+\norma{T^-f_-}_{L^6}^6+9\norma{(Tf_+)^2(T^-f_-)}_{L^2}^2+9\norma{(Tf_+)(T^-f_-)^2}_{L^2}^2\\
&\qquad+6\langle (Tf_+)^3\,,(Tf_+)^2(T^-f_-)\rangle+6\langle (Tf_+)(T^-f_-)^2\,,(T^-f_-)^3\rangle\\
&\qquad+18\langle(Tf_+)^2(T^-f_-)\,,(Tf_+)(T^-f_-)^2\rangle.
\end{split}
\end{equation}
Using the Cauchy-Schwarz and H\"older's inequalities together with the sharp inequality for $T$ and $T^-$ we obtain
\begin{equation}\label{upper-bound-l6-crossed-terms}
\begin{split}
\norma{\bar{T}f}_{L^6}^6&\leq \mathbf{H}_{d,6}^6(\norma{f_+}_{L^2}^6+\norma{f_-}_{L^2}^6+9\norma{f_+}_{L^2}^4\norma{f_-}_{L^2}^2+9\norma{f_+}_{L^2}^2\norma{f_-}_{L^2}^4\\
&\qquad\qquad+6\norma{f_+}_{L^2}^5\norma{f_-}_{L^2}+6\norma{f_+}_{L^2}\norma{f_-}_{L^2}^5+18\norma{f_+}_{L^2}^3\norma{f_-}_{L^2}^3).
\end{split}
\end{equation}
We now use the numerical inequality from \cite{Fo}*{Lemma 6.6}, namely, for $X,Y\geq 0$
\[ X^6+Y^6+9X^4Y^2+9X^2Y^4+6X^5Y+6XY^5+18X^3Y^3\leq \frac{25}{4}(X^2+Y^2)^3, \]
with equality if and only if $X=Y$. In this way we obtain
\begin{equation}\label{inequality-l6-double-sheeted}
\norma{\bar{T}f}_{L^6}^6\leq \frac{25}{4}\mathbf{H}_{d,6}^6(\norma{f_+}_{L^2}^2+\norma{f_-}_{L^2}^2)^3=\frac{25}{4}\mathbf{H}_{d,6}^6\norma{f}_{L^2}^6.
\end{equation}
From the first part of Theorem \ref{main-theorem} we have the inequalities $\norma{Tf_+}_{L^6}^6\leq \mathbf{H}_{d,6}^6\norma{f_+}_{L^2}^6$ and $\norma{T^-f_-}_{L^6}^6\leq \mathbf{H}_{d,6}^6\norma{f_-}_{L^2}^6$, which are strict whenever $f_+\neq 0$ and $f_-\neq 0$, so that if $f\neq 0$ then \eqref{inequality-l6-double-sheeted} is a strict inequality. More importantly, the inequalities
\begin{align}
\label{first-CS}
\langle (Tf_+)^3\,,(Tf_+)^2(T^-f_-)\rangle&\leq \norma{(Tf_+)^3}_{L^2}\norma{(Tf_+)^2(T^-f_-)}_{L^2}\\
\label{second-CS}
\langle (Tf_+)(T^-f_-)^2\,,(T^-f_-)^3\rangle&\leq \norma{(Tf_+)(T^{-}f_-)^2}_{L^2}\norma{(T^-f_-)^3}_{L^2}\\
\label{third-CS}
\langle(Tf_+)^2(T^-f_-)\,,(Tf_+)(T^-f_-)^2\rangle&\leq \norma{(Tf_+)(T^{-}f_-)^2}_{L^2}\norma{(T^-f_-)^3}_{L^2}
\end{align}
are strict, whenever $f_+,f_-\neq 0$. Indeed, equality in the Cauchy-Schwarz inequality \eqref{first-CS} forces $(Tf_+)^3=\la(Tf_+)^2(T^-f_-)$ for some $\la\in\mathbb C$, $\la\neq 0$, which by the use of the Fourier transform implies that $f_+\sigma^+\ast f_+\sigma^+\ast f_+\sigma^+=\la f_+\sigma^+\ast f_+\sigma^+\ast f_-\sigma^-$, so that the support of $f_+\sigma^+\ast f_+\sigma^+\ast f_-\sigma^-$ is contained in $\mathbbm H^{d,+}+\mathbbm H^{d,+}+\mathbbm H^{d,+}$, which is impossible if $f_+,f_-\neq 0$. A similar argument shows that \eqref{second-CS} and \eqref{third-CS} are strict inequalities when $f_+,f_-\neq 0$.

It was observed by D. Foschi in a related argument that it is possible to sharpen an inequality such as \eqref{first-CS}, \eqref{second-CS} and \eqref{third-CS} which then can be used to obtain a better bound for the best constant $\bar{\mathbf{H}}_{d,6}$. In what follows we adapt the argument to the hyperboloid in the case $d=2$. 

Let us write $\widetilde{f}_-(y)=\overline{f_-(-y)}$, where the overline denotes complex conjugation. Then
\begin{align*}
\langle (Tf_+)^3\,,(Tf_+)^2(T^-f_-)\rangle&=\langle (Tf_+)^3\overline{(T^-f_-)}\,,(Tf_+)^2\rangle
=\langle (Tf_+)^3(T\widetilde{f}_-)\,,(Tf_+)^2\rangle\\
&=(2\pi)^3\langle f_+\sigma\ast f_+\sigma\ast f_+\sigma\ast \widetilde{f}_-\sigma\,,\,f_+\sigma\ast f_+\sigma\rangle\\
&\leq (2\pi)^3\norma{\sigma^{(\ast 4)}\cdot \sigma^{(\ast 2)}}_{L^\infty}^{1/2}\norma{f_+}_{L^2}^5\norma{f_-}_{L^2},
\end{align*}
where in the last line we used an argument as in Lemma \ref{convolution-foschi}. From Lemma \ref{convolutions-d2} we know
\[ \sigma\ast\sigma(\xi,\tau)=\frac{2\pi}{\sqrt{\tau^2-\ab{\xi}^2}}\chi_{\{\tau\geq  \sqrt{2^2+\ab{\xi}^2}\}}, \]
while the fourth convolution can be calculated in a similar way
\[ \sigma^{(\ast 4)}(\xi,\tau)=4\pi^3\frac{(\sqrt{\tau^2-\ab{\xi}^2}-4)^2}{\sqrt{\tau^2-\ab{\xi}^2}}\chi_{\{\tau\geq  \sqrt{4^2+\ab{\xi}^2}\}}. \]
Then
\[ \norma{\sigma^{(\ast 4)}\cdot\sigma^{(\ast 2)}}_{L^\infty(\R^3)}=\Norma{8\pi^4\frac{(\sqrt{\tau^2-\ab{\xi}^2}-4)^2}{\tau^2-\ab{\xi}^2}\chi_{\{\sqrt{\tau^2-\ab{\xi}^2}\geq 4\}}}_{L^\infty(\R^3)}=8\pi^4. \]
We obtain the inequality
\begin{equation}\label{improved-CS}
\langle (Tf_+)^3,(Tf_+)^2(T^-f_-)\rangle\leq 16\sqrt{2}\pi^5\norma{f_+}_{L^2}^5\norma{f_-}_{L^2},
\end{equation}
and a similar method gives improved inequalities for \eqref{second-CS} and \eqref{third-CS} with the same constant on the right hand side. Note that $16\sqrt{2}\pi^5<\mathbf{H}_{2,6}^6=(2\pi)^5$, so there is an improvement over using the Cauchy-Schwarz and H\"older's inequality together with the sharp bound for $T$ and $T^-$. Using \eqref{l6-norm-with-crossed-terms} and \eqref{improved-CS} we can obtain the analog of \eqref{upper-bound-l6-crossed-terms},
\begin{equation}\label{improved-upper-bound-l6-crossed-terms}
\begin{split}
\norma{\bar{T}f}_{L^6}^6&\leq \mathbf{H}_{d,6}^6(\norma{f_+}_{L^2}^6+\norma{f_-}_{L^2}^6+9\norma{f_+}_{L^2}^4\norma{f_-}_{L^2}^2+9\norma{f_+}_{L^2}^2\norma{f_-}_{L^2}^4\\
&\qquad\qquad+3\sqrt{2}\norma{f_+}_{L^2}^5\norma{f_-}_{L^2}+3\sqrt{2}\norma{f_+}_{L^2}\norma{f_-}_{L^2}^5+9\sqrt{2}\norma{f_+}_{L^2}^3\norma{f_-}_{L^2}^3).
\end{split}
\end{equation}
There is the sharp numerical bound 
\[ X^6+Y^6+9X^4Y^2+9X^2Y^4+3\sqrt{2}X^5Y+3\sqrt{2}XY^5+9\sqrt{2}X^3Y^3\leq \frac{5}{8}(4+3\sqrt{2})(X^2+Y^2)^3, \]
for all $X,Y\geq 0$, with equality if and only if $X=Y$. It implies
\[ \norma{\bar{T}f}_{L^6}^6\leq \frac{5}{8}(4+3\sqrt{2})\mathbf{H}_{d,6}^6\norma{f}_{L^2}^6, \]
which is the desired improvement over \eqref{inequality-l6-double-sheeted}.
\end{proof}


Proposition \ref{two-sheeted-hyperboloid-p4} gives the proof of the second part
of Theorem \ref{main-theorem} while Proposition \ref{two-sheeted-hyperboloid-p6} explains the comment in Remark \ref{new-remark}.

\section{Scaling}
Here, we record the scaling for the family of operators $\{T_s\}_{s>0}$. Recall from the Introduction that for $s>0$, $\mathbbm
H^d_s:=\{(y,\sqrt{s^2+\ab{y}^2}):y\in\R^d\}$ is equipped with the measure
$\sigma_s(y,y')=\delta(y'-\sqrt{s^2+\ab{y}^2})dydy'/\sqrt{s^2+\ab{y}^2}$. The operator $T_s$ is defined on $\mathcal S(\R^d)$ by
 \[T_sf(x,t)=\widehat{f\sigma_s}(-x,-t)=\int_{\R^d} e^{ix\cdot y}e^{it\sqrt{s^2+\ab{y}^2}}f(y)\frac{dy}{\sqrt{s^2+\ab{y}^2}}.\]

We want to show that $\mathbf{H}_{d,p,s}$ defined in \eqref{hdps} satisfies \eqref{scaling}. With the change of variables
$v=sy$ in \eqref{fourier-extension-operator-hyperboloid}, we have
\begin{align*}
 Tf(x,t)&=\int_{\R^d} e^{ix\cdot y}e^{it\sqrt{1+\ab{y}^2}}f(y)\frac{dy}{\sqrt{1+\ab{y}^2}}\\
 &=\int_{\R^d} e^{is^{-1}x\cdot y}e^{it\sqrt{1+s^{-2}\ab{y}^2}}f(s^{-1}y)\frac{s^{-d}dy}{\sqrt{1+s^{-2}\ab{y}^2}}\\
 &=s^{-d+3/2}\int_{\R^d} e^{is^{-1}x\cdot y}e^{is^{-1}t\sqrt{s^2+\ab{y}^2}}s^{-1/2}f(s^{-1}y)\frac{dy}{\sqrt{s^2+\ab{y}^2}}
\end{align*}
from which it follows that $s^{d-3/2}Tf(sx,st)=T_s(s^{-1/2}f(s^{-1}\cdot))(x,t)$ and
\[s^{d-3/2-(d+1)/p}\norma{Tf}_{L^p(\R^{d+1})}=\norma{T_ss^{-1/2}f(s^{-1}\cdot)}_{L^p(\R^{d+1})}.\]
On the other hand,
\begin{align*}
 \int_{\R^d} \ab{f(y)}^2\frac{dy}{\sqrt{1+\ab{y}^2}}&=\int_{\R^d} \ab{f(s^{-1}y)}^2 \frac{s^{-d}dy}{\sqrt{1+s^{-2}\ab{y}^2}}\\
 &=s^{-d+2}\int_{\R^d} \ab{s^{-1/2}f(s^{-1}y)}^2\frac{dy}{\sqrt{s^2+\ab{y}^2}},
\end{align*}
that is, $\norma{f}_{L^2(\sigma)}=s^{-(d-2)/2}\norma{s^{-1/2}f(s^{-1}\cdot)}_{L^2(\sigma_s)}$. Thus
\begin{align*}
 s^{(d-1)/2-(d+1)/p}\norma{Tf}_{L^p(\R^{d+1})}\norma{f}_{L^2(\sigma)}^{-1}&=\norma{T_ss^{-1/2}f(s^{-1}\cdot)}_{L^p(\R^{d+1})}
\norma {s^{-1/2}f(s^{-1}\cdot)}_{L^2(\sigma_s)}^{-1},
\end{align*}
and it follows that for all $s>0$,
\begin{align}
 \label{equality-of-constants}
 \mathbf{H}_{d,p,s}&=s^{(d-1)/2-(d+1)/p}\mathbf{H}_{d,p}.
\end{align}

\section{Some explicit calculations for the case \texorpdfstring{$d=2$}{d=2}}

The exponential integral function $\Ei(x)$, $x\neq 0$, is defined by
\begin{equation}
 \label{exponential-integral}
 \Ei(x)=-\int_{-x}^\infty\frac{e^{-t}}{t}dt=\int_{-\infty}^x \frac{e^t}{t}dt
\end{equation}
where the principal value is taken for $x>0$. 
\begin{lemma}
 \label{explicit-computation-functional-linear}
 Let $a>0$ and $f_a(y)=e^{-a\sqrt{s^2+\ab{y}^2}}$, $y\in\R^2$. Then 
 \begin{align}
  \label{expression-functional-linear}
  \norma{T_sf_a}_{L^6(\R^3)}^6\norma{f_a}_{L^2(\sigma_s)}^{-6}&=(2\pi)^5(1-6as-36a^2s^2e^{6as} \Ei(-6as)),\text{ and}\\
  \label{expression-functional-linear-d2p4}
  \norma{T_sf_a}_{L^4(\R^3)}^4\norma{f_a}_{L^2(\sigma_s)}^{-4}&=\frac{2^3\pi^4}{s}(-4ase^{4as}\Ei(-4as)).
 \end{align}
\end{lemma}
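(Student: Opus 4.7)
The plan is to exploit the special structure of $f_a(y) = e^{-a\psi_s(y)}$ and then reduce everything to elementary one-dimensional integrals by means of Lorentz polar coordinates.

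The first observation is that for any $k \geq 1$ and $(\xi,\tau)$ with $\tau \geq \sqrt{(ks)^2 + |\xi|^2}$,
\[
(f_a\sigma_s)^{(*k)}(\xi,\tau) = \int f_a(x_1)\cdots f_a(x_k)\,\delta\pvector{\xi - \sum x_i \\ \tau - \sum\psi_s(x_i)}\prod_i \frac{dx_i}{\psi_s(x_i)} = e^{-a\tau}\,\sigma_s^{(*k)}(\xi,\tau),
\]
since the factor $\prod_i e^{-a\psi_s(x_i)}$ collapses via the delta function. Combining this with the identity
$\norma{T_sf_a}_{L^{2k}(\R^3)}^{2k} = (2\pi)^3\norma{(f_a\sigma_s)^{(*k)}}_{L^2(\R^3)}^2$ and with the explicit formulas for $\sigma_s^{(*k)}$ supplied by Lemma \ref{convolutions-d2}, I reduce the left-hand side of \eqref{expression-functional-linear} and \eqref{expression-functional-linear-d2p4} to integrals of the form
\[
\int_{\tau\geq\sqrt{(ks)^2+|\xi|^2}} F(\tau^2 - |\xi|^2)\,e^{-2a\tau}\,d\xi\,d\tau
\]
with $k=3$, $F(u^2) = (1 - 3s/u)^2$ for the $L^6$ case, and $k=2$, $F(u^2) = 1/u^2$ for the $L^4$ case.

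The second step is to evaluate these integrals by introducing Lorentz polar coordinates $(\xi,\tau) = (u\sinh\rho\,\omega, u\cosh\rho)$ with $u\geq 0$, $\rho\geq 0$, $\omega \in S^1$. A routine Jacobian computation gives $d\xi\,d\tau = u^2\sinh\rho\,du\,d\rho\,d\theta$, while $\tau^2 - |\xi|^2 = u^2$ and the forward-cone condition $\tau \geq \sqrt{(ks)^2+|\xi|^2}$ becomes $u \geq ks$. Performing the $\theta$ integration trivially and the $\rho$ integration via the substitution $v = \cosh\rho$ reduces the above to
\[
\frac{\pi}{a}\int_{ks}^\infty F(u^2)\,u\,e^{-2au}\,du,
\]
a purely one-dimensional integral.

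For the $L^2(\sigma_s)$ norm I compute directly in polar coordinates and substitute $u = \psi_s(r)$ to get $\norma{f_a}_{L^2(\sigma_s)}^2 = \pi e^{-2as}/a$, so the denominator in \eqref{expression-functional-linear} and \eqref{expression-functional-linear-d2p4} is $\pi^k e^{-2aks}/a^k$. All that remains is to expand $(1 - 3s/u)^2$ in the $L^6$ integral and evaluate the three pieces using $\int_b^\infty u e^{-2au}du = e^{-2ab}(1+2ab)/(4a^2)$, $\int_b^\infty e^{-2au}du = e^{-2ab}/(2a)$, and the defining identity $\int_b^\infty e^{-2au}/u\,du = -\Ei(-2ab)$; for the $L^4$ integral only the last of these is needed with $b = 2s$. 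After multiplying the resulting ratio by the appropriate factor $e^{2aks}$ coming from the denominator, the prefactors collapse to $(2\pi)^5$ and $2^3\pi^4/s$ respectively, yielding the stated formulas. The main obstacle is not conceptual but bookkeeping: tracking all powers of $2\pi$, $a$, and $e^{2aks}$ carefully enough to confirm the exact constants.
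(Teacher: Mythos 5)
Your reduction is exactly the paper's: you observe that $(f_a\sigma_s)^{(*k)}(\xi,\tau)=e^{-a\tau}\sigma_s^{(*k)}(\xi,\tau)$ (because the delta makes $\sum\psi_s(x_i)=\tau$), compute $\norma{f_a}_{L^2(\sigma_s)}^2=\pi e^{-2as}/a$, and plug in the explicit formulas for $\sigma_s^{(*k)}$ from Lemma~\ref{convolutions-d2}; I have checked that the one-variable integrals you are left with, evaluated with $\int_b^\infty u e^{-2au}\,du$, $\int_b^\infty e^{-2au}\,du$, and $\int_b^\infty e^{-2au}/u\,du=-\Ei(-2ab)$, do produce the stated constants $(2\pi)^5$ and $2^3\pi^4/s$. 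The one genuine difference from the paper is how the remaining $3$-dimensional integral $\int F(\tau^2-|\xi|^2)e^{-2a\tau}\,d\xi\,d\tau$ is reduced to one dimension. The paper writes $\xi$ in ordinary polar coordinates and does an explicit $\int_0^{\sqrt{\tau^2-(ks)^2}}\dots r\,dr$ followed by a $\tau$-integral, which for $k=3$ produces logarithmic intermediate terms that only cancel at the end. You instead pass to Lorentz polar coordinates $(\xi,\tau)=(u\sinh\rho\,\omega,u\cosh\rho)$ with Jacobian $u^2\sinh\rho\,du\,d\rho\,d\theta$ (which I verified), and the hyperbolic-angle integral $\int_0^\infty e^{-2au\cosh\rho}\sinh\rho\,d\rho=e^{-2au}/(2au)$ gives a single universal factor, leaving $\frac{\pi}{a}\int_{ks}^\infty F(u^2)u\,e^{-2au}\,du$ directly and uniformly in $k$. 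This is cleaner because it isolates the Lorentz invariance of $\sigma_s^{(*k)}$ once and for all and avoids the intermediate log cancellations the paper has to track, so the computation scales better to higher $k$. Mind the minor bookkeeping points you flag: your $F$ omits the factors $(2\pi)^4$ (for $k=3$) and $(2\pi)^2$ (for $k=2$) that come from $\sigma_s^{(*k)}$, and the prefactor $(2\pi)^{d+1}=(2\pi)^3$ from the convolution identity must also be carried; once included they do indeed collapse to the stated constants.
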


\begin{proof}
We first compute the $L^2(\sigma_s)$-norm of $f_a$,
\begin{align*}
 \norma{f_a}_{L^2(\sigma_s)}^2&=\int_{\R^2} e^{-2a\sqrt{s^2+\ab{y}^2}}\frac{dy}{\sqrt{s^2+\ab{y}^2}}=2\pi \int_0^\infty
e^{-2a\sqrt{s^2+r^2}}\frac{r}{\sqrt{s^2+r^2}}dr\\
 &=2\pi\int_s^\infty e^{-2a r}dr=\frac{\pi}{a}e^{-2as}.
\end{align*}
The formulas in \eqref{expression-functional-linear} and \eqref{expression-functional-linear-d2p4} are easier to compute in
their equivalent convolution form. Let $g_a(\xi,\tau)=e^{-a\tau}$ and observe that
$f_a\sigma_s*f_a\sigma_s=g_a\sigma_s*g_a\sigma_s$ and $f_a\sigma_s*f_a\sigma_s*f_a\sigma_s=g_a\sigma_s*g_a\sigma_s*g_a\sigma_s$.
Then, because $g_a$ is the exponential of a linear function,
$g_a\sigma_s*g_a\sigma_s(\xi,\tau)=g_a(\xi,\tau)\,\sigma_s*\sigma_s(\xi,\tau)$ and
$g_a\sigma_s*g_a\sigma_s*g_a\sigma_s(\xi,\tau)=g_a(\xi,\tau)\,\sigma_s*\sigma_s*\sigma_s(\xi,\tau)$. Therefore,
\begin{align*}
 \norma{f_a&\sigma_s*f_a\sigma_s*f_a\sigma_s}_{L^2(\R^3)}^2\\
 &=\int_{\R\times\R^2}
e^{-2a\tau}(2\pi)^4\biggl(1-\frac{3s}{\sqrt{\tau^2-\ab{\xi}^2}}\biggr)^2\chi_{\{\tau\geq \sqrt{(3s)^2+\ab{\xi}^2}\}}d\tau d\xi\\
 &=(2\pi)^5\int_{3s}^\infty \int_0^{\sqrt{\tau^2-(3s)^2}} e^{-2a\tau}\biggr(1-\frac{3s}{\sqrt{\tau^2-r^2}}\biggl)^2 rdr d\tau\\
 &=(2\pi)^5\int_{3s}^\infty\int_0^{\sqrt{\tau^2-(3s)^2}}
 e^{-2a\tau}\biggl(r+(3s)^2\frac{r}{\tau^2-r^2}-6s\frac{r}{\sqrt{t^2-r^2}}\biggr)drd\tau\\
 &=(2\pi)^5\int_{3s}^\infty e^{-2a\tau}(\tfrac{1}{2}(\tau^2-(3s)^2)+(3s)^2(\log \tau-\log(3s))-6s(\tau-3s))d\tau\\
 &=(2\pi)^5\biggl(\frac{1}{2}\int_{3s}^\infty e^{-2a\tau}\tau^2d\tau+(3s)^2\int_{3s}^\infty e^{-2a\tau}\log \tau d\tau\\
 &\qquad\qquad\qquad-6se^{-6as}\int_{0}^\infty e^{-2a\tau}\tau d\tau -(\tfrac{9}{2}s^2+(3s)^2\log(3s))\int_{3s}^\infty
e^{-2a\tau}d\tau\biggr)\\
 &=(2\pi)^5\biggl(\frac{e^{-6as}(1+6as(1+3as))}{8a^3}+(3s)^2\frac{e^{-6as}\log(3s)-\Ei(-6as)}{2a}-\frac{6se^{-6as}}{4a^2}\\
 &\qquad\qquad\qquad-\biggl(\frac{9}{2}s^2+(3s)^2\log(3s)\biggr)\frac{e^{-6as}}{2a}\biggr).
\end{align*}
Rearranging terms, we have
\[\norma{f_a\sigma_s*f_a\sigma_s*f_a\sigma_s}_{L^2(\R^3)}^2=(2\pi)^5e^{-6as}\biggl(\frac{1}{8a^3}-(3s)^2\frac{\Ei(-6as)e^{6as}}{2a
}-\frac{6s}{8a^2}\biggr).\]
Thus
\begin{align*}
 \norma{f_a\sigma_s*f_a\sigma_s*f_a\sigma_s}_{L^2(\R^3)}^2\norma{f_a}_{L^2(\sigma_s)}^{-6}&=(2\pi)^5\pi^{-3}a^3\biggl(\frac{1}{
8a^3
} -(3s)^2\frac { \Ei(-6as)e^{6as}}{2a}-\frac{6s}{8a^2}\biggr)\\
 &=(2\pi)^2(1-6as-36a^2s^2e^{6as}\Ei(-6as)).
\end{align*}
For the case of $L^4(\R^3)$,
\begin{align*}
 \norma{f_a\sigma_s*f_a\sigma_s}_{L^2(\R^3)}^2&=\int_{\R\times\R^2} e^{-2a\tau}\frac{(2\pi)^2}{\tau^2-\ab{\xi}^2}\chi_{\{\tau\geq  \sqrt{(2s)^2+\ab{\xi}^2}\}} d\tau d\xi\\
 &=(2\pi)^3\int_{2s}^\infty\int_0^{\sqrt{\tau^2-(2s)^2}}e^{-2a\tau}\frac{r}{\tau^2-r^2}dr d\tau\\
 &=(2\pi)^3\biggl(\frac{e^{-4as}\log(2s)-\Ei(-4as)}{2a}-\log(2s)\frac{e^{-4as}}{2a}\biggr)\\
 &=-(2\pi)^3\frac{\Ei(-4as)}{2a}.
\end{align*}
Thus
\begin{align*}
 \norma{f_a\sigma_s*f_a\sigma_s}_{L^2(\R^3)}^2\norma{f_a}_{L^2(\sigma_s)}^{-4}&=-(2\pi)^3a\frac{\Ei(-4as)}{2\pi^2}e^{4as}\\
 &=\frac{\pi}{s}(-4ase^{4as}\Ei(-4as)).\qedhere
\end{align*}
\end{proof}

\begin{proof}[Proof of Lemma \ref{extremizing-sequence-d2}]
 Using the expressions in Lemma \ref{explicit-computation-functional-linear}, we obtain
 \[\lim\limits_{a\to 0^+}\norma{T_sf_a}_{L^6(\R^3)}^6\norma{f_a}_{L^2(\sigma_s)}^{-6}=\lim\limits_{a\to
0^+}(2\pi)^{5}(1-6as-36a^2s^2e^{6as} \Ei(-6as))^{}=(2\pi)^{5}\]
 and
 \[\lim\limits_{a\to
\infty}\norma{T_sf_a}_{L^4(\R^3)}^4\norma{f_a}_{L^2(\sigma_s)}^{-4}=\lim\limits_{a\to\infty}\frac{2^3\pi^4}{s}(-4ase^{4as}
\Ei(-4as))=\frac{2^3\pi^4}{s}.\qedhere\]
\end{proof}

\begin{remark}
\mbox{}
\begin{enumerate}
 \item[1.] It is not hard to see that the function $a\mapsto 1-a+a^2e^{a} \Ei(-a)$ is strictly decreasing for $a\in
[0,\infty)$ and tends to $0$ as $a\to\infty$ and to $1$ as $a\to 0^+$. Thus
$\norma{T_sf_a}_{L^6(\R^3)}^6\norma{f_a}_{L^2(\sigma_s)}^{-6}$ is a strictly decreasing function of $a$, for each fixed $s$.
 \item[2.] The function $a\mapsto -ae^{a}\Ei(-a)$ is strictly increasing for $a\in[0,\infty)$ and tends to $0$ as $a\to 0^+$ and
to
$1$ as $a\to\infty$. Thus $\norma{T_sf_a}_{L^4(\R^3)}^4\norma{f_a}_{L^2(\sigma_s)}^{-4}$ is a strictly increasing function of $a$,
for
each fixed $s$.
\end{enumerate}
\end{remark}

\section{Some explicit calculations for the case \texorpdfstring{$d=3$}{d=3}}

\begin{proof}[Proof of Lemma \ref{linear-function-four}]
For the $L^2(\sigma_s)$-norm of $f_a$, we have
\begin{align*}
 \norma{f_a}_{L^2(\sigma_s)}^2&=\int_{\R^3} e^{-2a\sqrt{s^2+\ab{y}^2}}\frac{dy}{\sqrt{s^2+\ab{y}^2}}=4\pi\int_0^\infty
e^{-2a\sqrt{s^2+r^2}}\frac{r^2\,dr}{\sqrt{s^2+r^2}}\\
 &=4\pi\int_s^\infty e^{-2au}\sqrt{u^2-s^2}du=\frac{4\pi}{a^2}\int_{as}^\infty e^{-2x}\sqrt{x^2-(as)^2}dx.
\end{align*}
Then 
\[\lim\limits_{a\to 0^+}\frac{a^2}{\pi}\norma{f_a}_{L^2(\sigma_s)}^2=1.\]
Using the convolution form of the inequality, our goal is to show
\[\lim\limits_{a\to 0^+}a^4\norma{f_a\sigma_s* f_a\sigma_s}_{L^2(\R^4)}^2=2\pi^3.\]

As in the proof of Lemma \ref{explicit-computation-functional-linear},
 \begin{align*}
  \norma{f_a\sigma_s* f_a\sigma_s}_{L^2(\R^4)}^2&
  =\int_{\R\times\R^3} e^{-2a\tau}
(2\pi)^2\biggl(1-\frac{4s^2}{\tau^2-\ab{\xi}^2}\biggr)\chi_{\{\tau\geq \sqrt{(2s)^2+\ab{\xi}^2}\}} d\tau d\xi\\
  &=(2\pi)^2 4\pi\int_{2s}^\infty\int_0^{\sqrt{\tau^2-(2s)^2}}e^{-2a\tau}\biggl(1-\frac{4s^2}{\tau^2-r^2}\biggr)r^2dr d\tau\\
  &=16\pi^3 \int_{2s}^\infty
e^{-2a\tau}\biggr(\frac{1}{3}(\tau^2-(2s)^2)^{\frac{3}{2}}+4s^2((\tau^2-(2s)^2)^{1/2}\\
 &\qquad-\tau\log\biggl(\frac{\tau+\sqrt{\tau^2-(2s)^2}}{2s}\biggr)\biggl) d\tau\\
  &=\frac{16\pi^3}{a} \int_{2as}^\infty
e^{-2\tau}\biggr(\frac{1}{3a^3}(\tau^2-(2as)^2)^{\frac{3}{2}}+\frac{4s^2}{a}((\tau^2-(2as)^2)^{1/2}\\
&\qquad-\frac{\tau}{a}\log\biggl(\frac{\tau+\sqrt{\tau^2-(2as)^2}}{2as}\biggr)\biggl) d\tau.
 \end{align*}
Multiplying by $a^4$ and taking the limit as $a\to 0^+$ gives
\begin{align*}
 \lim\limits_{a\to 0^+}a^4\norma{f_a\sigma_s* f_a\sigma_s}_{L^2(\R^4)}^2&=\frac{16\pi^3}{3}\int_0^\infty
e^{-2\tau}\tau^3 d\tau=2\pi^3.\qedhere
\end{align*}
\end{proof}

{\bf Acknowledgments.}
The author thanks his dissertation advisor, Michael Christ, for many helpful comments and suggestions.

\end{document}